\documentclass[12pt]{amsart}
\usepackage{amsmath,amssymb,hyperref,geometry,graphicx,tikz,ytableau,fp,mathdots,amsthm}
\usetikzlibrary{cd}

\geometry{margin=1in}
\setlength{\parskip}{3bp}
\setlength{\parindent}{1em}

\newcounter{intro}[section]

\newtheorem{theorem}[equation]{Theorem}
\newtheorem{lemma}[equation]{Lemma}
\newtheorem{corollary}[equation]{Corollary}
\newtheorem{proposition}[equation]{Proposition}
\newtheorem{result}[intro]{Result}
\theoremstyle{definition}
\newtheorem{example}[equation]{Example}

\newtheorem{definition}[equation]{Definition}
\newtheorem{Definition}[intro]{Definition}

\newcommand{\R}{\mathbb{R}}
\newcommand{\Vol}{\mathrm{Vol}}
\newcommand{\D}{\mathcal{D}}
\newcommand{\Z}{\mathbb{Z}}
\newcommand{\LM}{\overline{\mathcal{LM}}}
\renewcommand{\P}{\mathbb{P}}
\newcommand{\C}{\mathbb{C}}

\newcommand{\T}{\mathbb{T}}

\newcommand{\bL}{\mathbb{L}}

\newcommand{\I}{\mathcal{I}}
\newcommand{\J}{\mathcal{J}}
\newcommand{\Conv}{\mathrm{Conv}}
\newcommand{\cL}{\mathcal{L}}

\newcommand{\rk}{\mathrm{rk}}
\renewcommand{\phi}{\varphi}
\newcommand{\F}{\mathcal{F}}
\newcommand{\M}{\overline{\mathcal{M}}}
\newcommand{\sM}{\mathsf{M}}
\newcommand{\cl}{\mathrm{cl}}
\newcommand{\ochi}{\overline{\chi}}

\setcounter{section}{0}
\setcounter{tocdepth}{1}
\numberwithin{equation}{section}

\begin{document}

\title{Matroid psi classes}
\author[J.~Dastidar]{Jeshu Dastidar}
\address{Department of Mathematics, University of California, Davis}
\email{jdastidar@ucdavis.edu }
\author[D.~Ross]{Dustin Ross}
\address{Department of Mathematics, San Francisco State University}
\email{rossd@sfsu.edu}

\begin{abstract}
Motivated by the intersection theory of moduli spaces of curves, we introduce psi classes in matroid Chow rings and prove a number of properties that naturally generalize properties of psi classes in Chow rings of Losev-Manin spaces. We use these properties of matroid psi classes to give new proofs of (1) a Chow-theoretic interpretation for the coefficients of the reduced characteristic polynomials of matroids, (2) explicit formulas for the volume polynomials of matroids, and (3) Poincar\'e duality for matroid Chow rings.
\end{abstract}

\maketitle

\section{Introduction}

Psi classes are special divisors that are ubiquitous in the study of the intersection theory of moduli spaces of curves. Psi classes arise naturally when computing products of boundary classes in $A^*(\M_{g,n})$ whose strata have excess intersection. In particular, any product of boundary classes can be written in terms of polynomials of psi classes on other boundary classes, and the top degrees of these polynomials are determined by the Witten--Kontsevich theorem \cite{Witten,Kontsevich}. In genus zero, this procedure takes on an especially simple form. Given $k$ distinct boundary divisors $D_1,\dots,D_k\in A^1(\M_{0,n})$, their product is also a (possibly empty) boundary class, and any monomial in these boundary divisors can be written as
\begin{equation}\label{boundarydivisorproduct}
D_1^{d_1}\cdots D_k^{d_k}=D_1\cdots D_k\prod_{i=1}^k(-\psi_{D_i}^--\psi_{D_i}^+)^{d_i-1}\in A^*(\M_{0,n}),
\end{equation}
where $\psi_{D}^{\pm}$ are certain psi classes associated to each divisor $D$. Moreover, if $\sum d_i=\dim(\M_{0,n})$, then the degree of the expression in the right-hand side of \eqref{boundarydivisorproduct} is a product of polynomials of psi classes on smaller dimensional moduli spaces, all of which are computable. The aim of this paper is to develop an analogue of these techniques in Chow rings of matroids. 

Matroids are combinatorial structures that generalize the behavior of finite sets of vectors, and Chow rings of matroids were introduced by Feichtner and Yuzvinsky \cite{FY}. In this work, we explore an appealing parallel between Chow rings of matroids and Chow rings of moduli spaces of curves. \emph{We introduce matroid psi classes in Chow rings of matroids and we show that they behave analogously to the usual psi classes in the Chow rings of moduli spaces of genus zero curves. As a first application, we then use psi classes to give simplified proofs of a number of recent foundational results concerning matroid Chow rings.} 

\subsection{Summary of results}

Given a loopless matroid $\sM=(E,\cL)$ consisting of a finite set $E$ and a lattice of flats $\cL\subseteq 2^E$, the Chow ring $A^*(\sM)$ is a graded $\Z$-algebra generated by matroid divisors $D_F\in A^1(\sM)$, one for each proper flat $F\in\cL^*=\cL\setminus\{\emptyset, E\}$ (see Subsection~\ref{sec:matroidbackground} for precise definitions). The primary objects of study in this paper are the following classes.

\begin{Definition}[Definition~\ref{def:matroidpsiclass}]
For any $F\in\cL$ and $e\in E$, define $\psi_F^{\pm}\in A^1(\sM)$ by
\[
\psi_F^-=\sum_{G\in\cL^*\atop e\in G}D_G-\sum_{G\in\cL^*\atop G\supseteq F}D_G\;\;\;\text{ and }\;\;\;\psi_F^+=\sum_{G\in\cL^*\atop e\notin G}D_G-\sum_{G\in\cL^*\atop G\subseteq F}D_G.
\]
\end{Definition}

The Chow classes $\psi_F^{\pm}$ do not depend on the choice of $e\in E$, which is why it is suppressed from the notation. As we will see, the definition of matroid psi classes is an immediate generalization of an expression for psi classes in terms of boundary divisors in Losev-Manin moduli spaces (see Lemma~\ref{lem:psilinearcombo}). 

After defining matroid psi classes, we establish the following analogue of Equation \eqref{boundarydivisorproduct}.

\begin{result}[Corollary~\ref{cor:selfintersectionmatroid}]
If $F_1,\dots,F_k$ are distinct flats of $\sM$ and $d_1,\dots,d_k$ are positive integers, then
\[
D_{F_1}^{d_1}\cdots D_{F_k}^{d_k}=D_{F_1}\cdots D_{F_k}\prod_{i=1}^k(-\psi_{F_i}^--\psi_{F_i}^+)^{d_i-1}\in A^*(\sM).
\]
\end{result}

This result allows us to express any monomial in matroid divisors as a squarefree expression along with a polynomial in psi classes. In the case that the product is in the top graded piece of the matroid Chow ring, our next result allows us to compute the degrees of the terms in Result B in terms of degrees of the special classes $\psi_0=\psi_{\emptyset}^+$ and $\psi_\infty=\psi_E^-$.

\begin{result}[Proposition~\ref{prop:degreeofpsi2}]
If $\emptyset=F_0\subsetneq F_1\subsetneq\dots\subsetneq F_{k}\subsetneq F_{k+1}=E$ are flats of $\sM$ and $a_0^+,a_1^-,a_1^+,\dots,a_{k}^-,a_k^+,a_{k+1}^-$ are nonnegative integers ,then
\[
\deg_\sM\Big(D_{F_1}\cdots D_{F_k}\prod_{i=0}^{k}(\psi_{F_{i}}^+)^{a_{i}^+}(\psi_{F_{i+1}}^-)^{a_{i+1}^-}\Big)=\prod_{i=0}^k\deg_{\sM[F_i,F_{i+1}]}\Big(\psi_0^{a_{i}^+}\psi_\infty^{a_{i+1}^-}\Big)
\]
\end{result}

In the above formula, $\sM[F_i,F_{i+1}]$ denotes the contraction by $F_{i}$ of the restriction of $\sM$ to $F_{i+1}$. In order to use Results B and C to explicitly compute degrees of polynomials in the generators, we use properties of psi classes to give a new proof of the following result, which had previously been proved by Huh and Katz \cite[Proposition~5.2]{HuhKatz}.

\begin{result}[Proposition~\ref{prop:degreeofpsi}]
If $\sM$ is a matroid and $a,b$ are nonnegative integers, then
\[
\deg_\sM(\psi_0^a\psi_\infty^b)=\begin{cases}
\mu^a(\sM) &\text{if }a+b=\rk(\sM)-1,\\
0 &\text{else,}
\end{cases}
\]
where $\mu^a(\sM)$ is the $a$th unsigned coefficient of the reduced characteristic polynomial of $\sM$.
\end{result}

Results B, C, and D provide an efficient algorithm for computing the degree of any monomial of matroid divisors. As a direct consequence of this algorithm, we recover a recent theorem of Eur \cite[Theorem 3.2]{Eur} that computes the coefficients of the volume polynomials of matroids.

\begin{result}[Theorem~\ref{thm:eur2}]
If $\emptyset=F_0\subsetneq F_1\subsetneq\dots\subsetneq F_{k}\subsetneq F_{k+1}=E$ are flats of $\sM$ and $d_1,\dots,d_k$  are positive integers that sum to $\rk(\sM)-1$, then
\[
\deg_\sM(D_{F_1}^{d_1}\cdots D_{F_k}^{d_k})=(-1)^{\rk(\sM)-k-1}\prod_{i=1}^k{d_i-1\choose \tilde d_i-\rk(F_i)}\mu^{\tilde d_i-\rk(F_i)}(\sM[F_i,F_{i+1}]),
\]
with
\[
\tilde d_j=\sum_{i=1}^j d_i.
\] 
\end{result}

Our developments can also be used to recover a recent theorem of Backman, Eur, and Simpson \cite[Theorem 5.2.4]{BES} that computes degrees of monomials in the ``simplicial'' generators, which, as it turns out, are nothing more than the psi classes $\psi_F^-$.

\begin{result}[Theorem~\ref{thm:BES}]
If $F_1,\dots,F_r$ are nonempty flats with $r=\rk(\sM)-1$, then
\[
\deg_\sM(\psi_{F_1}^-\dots\psi_{F_r}^-)=
\begin{cases}
1 &\text{if } 0<i_1<\dots<i_k\leq r\Longrightarrow \rk(F_{i_1}\cup\dots\cup F_{i_k})>k,\\
0 &\text{else.}
\end{cases}
\]
\end{result}

As a final application of our developments, we provide a new proof of Poincar\'e duality for $A^*(\sM)$, a result that was first proved by Adiprasito, Huh, and Katz \cite[Theorem~6.19]{AHK}.

\begin{result}[Theorem~\ref{thm:poincareduality}]
Let $\sM$ be a matroid of rank $r+1$. Then for any $k\in 0,\dots,r$, we have an isomorphism of $\Z$-modules:
\begin{align*}
A^k(\sM)&\rightarrow A^{r-k}(\sM)^\vee\\
\gamma&\mapsto (\mu\mapsto\deg_\sM(\mu\gamma)).
\end{align*}
\end{result}

To prove Result G, we simply use our computational algorithm to show that the transformation is lower triangular when written in terms of a particular ordering of the Feichtner--Yuzvinsky basis (see \cite[Corollary~1]{FY}) for $A^k(\sM)$ and its dual basis for $A^{r-k}(\sM)^\vee$, with all diagonal entries equal to $\pm 1$.

\subsection{Related work}

As should be clear from the discussion above, this work is closely related and indebted to prior contributions of several groups of mathematicians. The matroid psi classes that we introduce in this work are built from two special psi classes: $\psi_0=\psi_\emptyset^+$ and $\psi_\infty=\psi_E^-$. These two classes have already been studied extensively by Adiprasito, Huh, and Katz \cite{AHK}, where they were denoted $\beta$ and $\alpha$, respectively. Furthermore, as we mentioned above, the psi classes $\psi_F^-$ played an integral role in the work of Backman, Eur, and Simpson \cite{BES}, where they were denoted $h_F$. Our choice to use different notation for these classes in this paper simply stems from our goal of highlighting the parallel between Chow rings of matroids and Chow rings of moduli spaces of curves.

There is a related notion of ``tropical 	psi classes'' developed by Kerber and Markwig \cite{KerberMarkwig}---these classes form the tropical analogue of the classical psi classes on $\M_{0,n}$. Using the description of $\M_{0,n}$ as a wonderful compactification of the complement of the braid arrangement, due to DeConcini and Processi \cite{DP}, tropical psi classes can be interpreted as special elements of Chow rings of braid matroids with minimal building sets. We note that Chow rings of matroids with building sets were defined by Feichtner and Yuzvinsky \cite{FY} and are more general than the matroid Chow rings studied herein, which correspond to the special case of maximal building sets. It would be very interesting to develop a general theory of psi classes associated to matroids with building sets that simultaneously generalizes the matroid psi classes developed in this paper and the tropical psi classes developed by Kerber and Markwig.

\subsection{Outline of the paper}

Losev-Manin moduli spaces are the setting in which Chow rings of matroids intersect Chow rings of moduli spaces of curves. Because of this, we start this paper with an overview of the definition and key properties of psi classes in Losev-Manin spaces; this is the content of Section~\ref{sec:LMspaces}. We conclude Section~\ref{sec:LMspaces} by using psi classes to recover two known formulas for the volumes of generalized permutahedra, due to Postnikov \cite{Postnikov} and Eur \cite{Eur}. The impetus for this work was the observation that, upon generalizing psi classes to matroids, these proofs work nearly verbatim to compute volume polynomials in the more general matroid context.

In Section~\ref{sec:matroidpsi}, we introduce matroid psi classes, prove the natural generalizations of the properties discussed in Section~\ref{sec:LMspaces}, and then we give new proofs of the results of Eur and Backman, Eur and Simpson, generalizing the volume computations from Section~\ref{sec:LMspaces}, and we also give a new proof of Poincar\'e duality. We note that Section~\ref{sec:matroidpsi} is entirely self-contained and the matroid enthusiast may choose to skip Section~\ref{sec:LMspaces}. On the other hand, we hope that the discussion of Losev-Manin spaces will help the reader understand the context and motivation for the definition and development of matroid psi classes, and that this discussion might even motivate the interested combinatorialist to learn a little more about the beautiful subject of Chow rings of moduli spaces of curves.

\subsection{Acknowledgements}

This paper was born out of the first author's Master's thesis, which was advised by both Federico Ardila and the second author. The authors would like to warmly acknowledge Federico's guidance and contributions to this project. Despite his important influence on this work, Federico generously and graciously decided not to be listed as a coauthor of this paper.

The authors would also like to express their gratitude to the Department of Mathematics at San Francisco State University, and especially to Serkan Ho\c{s}ten and Eric Hsu, for their support and leadership during the tumultuous times of the COVID-19 pandemic, during which much of this work was carried out.

The second author was supported by a San Francisco State University Presidential Award during Fall 2020, and this work was also supported by a grant from the National Science Foundation (RUI DMS-2001439).

\section{Losev-Manin spaces and psi classes}\label{sec:LMspaces}

In order to motivate matroid psi classes, we begin with a discussion of psi classes in the setting of Losev-Manin spaces. Our purpose in this section is to describe the key properties of psi classes that are useful in computations in order to motivate the properties that we require upon generalizing psi classes to matroid Chow rings. The results in this section are well-known, so we do not provide complete proofs, only remarking on where the proofs can be found (or derived) in the literature. At the end of this section, we show how psi classes can be used to compute formulas for volumes of generalized permutahedra. All of the definitions and results in this section will be combinatorially generalized to matroid Chow rings in the next section.

\subsection{Losev-Manin spaces}

Losev-Manin spaces, introduced in \cite{LM}, parametrize collections of points on chains of projective lines. To describe these spaces, let us first establish some terminology. 

A \emph{chain of projective lines of length $k$} is a complex variety of the form
\[
C=C_1\sqcup\dots\sqcup C_k/\sim
\]
where $C_i=\P^1$ for all $i=1,\dots,k$ and $\sim$ is the relation that identifies $\infty_i=[0:1]\in C_i$ with $0_{i+1}=[1:0]\in C_{i+1}$ to form a node. The projective lines $C_1,\dots,C_k$ are referred to as the \emph{components} of the chain $C$, and we define $0=0_1\in C_1$ and $\infty=\infty_k\in C_k$.

Given a chain of projective lines $C$, a configuration of $n$ points $p_1,\dots,p_n\in C$ is called \emph{stable} if $\{p_1,\dots,p_n\}$ is disjoint from $0$, $\infty$, and the nodes of $C$, and if each component of $C$ contains at least one $p_i$. We do not require the points to be distinct. Two stable configurations $(C;p_1,\dots,p_n)$ and $(C';p_1',\dots,p_n')$ are said to be \emph{isomorphic} if there exists an isomorphism of varieties $f:C\rightarrow C'$ such that $f(0)=0$, $f(\infty)=\infty$, and $f(p_i)=p_i'$ for all $i=1,\dots,n$. 

\begin{definition}
For any $n\geq 1$, the \emph{Losev-Manin space $\LM_n$} is the set of all stable configurations of $n$ points on chains of projective lines, up to isomorphism. A point in $\LM_n$ is an equivalence class $[C,p_1,\dots,p_n]$ where $C$ is a chain of projective lines and $p_1,\dots,p_n\in C$ is a stable configuration of $n$ points.
\end{definition}

The sets $\LM_n$ were first constructed as smooth projective varieties by Losev and Manin \cite{LM}; in fact, they proved that $\LM_n$ is the toric variety associated to the $(n-1)$-dimensional permutahedron. In particular, $\LM_n$ is a disjoint union of tori, one corresponding to each face of the permutahedron. We now describe those tori explicitly.

To every flag of nonempty subsets
\[
\F=(\emptyset=F_0\subsetneq F_1\subsetneq \dots\subsetneq F_k\subsetneq F_{k+1}=[n])
\]
define a subset of $\LM_n$ by
\[
\T_\F=\bigg\{[C;p_1,\dots,p_n]\;\bigg|\;{C \text{ has } k+1 \text{ components } C_0,\dots,C_{k} \atop \text{and } p_j\in C_i \text{ if and only if }j\in F_{i+1}\setminus F_{i}}\bigg\}.
\]
We depict a general element of $\T_\F$ as follows:
\begin{center}
\tikz{
\draw[thick] (-7,0.1) edge[-,bend right] (-10,1.5);
\draw[thick] (-7.5,0.2) edge[-,bend left] (-4,0);
\draw[thick] (-2,0) edge[-,bend left] (1.5,0.2);
\draw[thick] (1,0.1) edge[-,bend left] (4,1.5);
\node [] at (-3,0) {$\boldsymbol{\cdots}$};
\node [] at (-9.5,1.5) {$\bullet$};
\node [] at (-9.5,1.2) {$0$};
\node [] at (-8.75,1.4) {$/$};
\node [] at (-8.25,1.2) {$/$};
\node [] at (-7.85,1) {$/$};
\node [] at (-8.5,0.5) {$F_1\setminus F_0$};
\node [] at (-6.35,0.55) {$\backslash$};
\node [] at (-5.70,0.58) {$|$};
\node [] at (-5.05,0.5) {$/$};
\node [] at (-5.75,0.05) {$F_2\setminus F_1$};
\node [] at (0.35,0.55) {$/$};
\node [] at (-0.5,0.58) {$|$};
\node [] at (-1.35,0.35) {$\backslash$};
\node [] at (-0.40,-0.15) {$F_{k}\setminus F_{k-1}$};
\node [] at (3.5,1.5) {$\bullet$};
\node [] at (3.5,1.2) {$\infty$};
\node [] at (2.75,1.4) {$\backslash$};
\node [] at (2.25,1.2) {$\backslash$};
\node [] at (1.80,0.9) {$\backslash$};
\node [] at (3,0.45) {$F_{k+1}\setminus F_{k}$};
}
\end{center}

Notice that every element of $\LM_n$ is an element of exactly one set of the form $\T_\F$, so the sets $\T_\F$ partition $\LM_n$. Moreover each $\T_\F$ is an algebraic torus. To see why, consider a particular $\T_\F$ and choose one point from each set $F_{i+1}\setminus F_{i}$. Notice that there is a unique automorphism of $C$ that maps the chosen point in $F_{i+1}\setminus F_{i}$ to $[1,1]\in C_i$. After fixing this isomorphism, the remaining points in $F_{i+1}\setminus F_{i}$ can vary throughout any point of $C_i$ except $0_i$ and $\infty_i$. It follows that
\[
\T_\F=(\C^*)^{|F_1|-|F_0|-1}\times(\C^*)^{|F_2|-|F_1|-1}\dots\times(\C^*)^{|F_{k+1}|-|F_k|-1}=(\C^*)^{n-k-1}.
\]
The tori $\T_\F$ are not closed subvarieties of $\LM_n$, but we may take their closures, which leads to the following important subvarieties.

\begin{definition}
The \emph{stratum $X_\F\subseteq\LM_n$} associated to a flag $\F$ of subsets of $[n]$ is the Zariski closure of the torus $\T_\F$:
\[
X_\F=\overline{\T_\F}.
\]
We say that a subvariety $Z\subseteq\LM_n$ is a \emph{stratum} if it is equal to $X_\F$ for some flag $\F$. For a subset $\emptyset\subsetneq F\subsetneq [n]$, we use the shorthand
\[
X_F=X_{\emptyset\subsetneq F\subsetneq[n]}.
\]
\end{definition}

Each stratum is, again, a disjoint union of tori. To describe these inclusions, it is useful to introduce the notion of refinements. We say that a flag
\[
\F'=(\emptyset\subsetneq F_1'\subsetneq \dots\subsetneq F_\ell'\subsetneq [n])
\]
is a refinement of the flag
\[
\F=(\emptyset\subsetneq F_1\subsetneq \dots\subsetneq F_k\subsetneq [n])
\]
and write $\F'\preceq\F$ if, for every $i\in\{1,\dots,k\}$, there exists some $j\in\{1,\dots,\ell\}$ such that $F_i=F_j'$. With this notion, it can be checked that
\[
X_\F=\displaystyle\bigsqcup_{\F'\preceq\F} \T_{\F'}.
\]
In particular, it follows that $X_{\F_1}\cap X_{\F_2}=X_{\F_3}$ where $\F_3$ is the maximal common refinement of $\F_1$ and $\F_2$ (the intersection is empty if no common refinement exists). 

\subsection{Chow rings and volumes of generalized permutahedra}

The Chow ring of $\LM_n$ is well-known and can be expressed as a quotient of the formal polynomial ring generated by $X_F$ with $F$ a proper subset of $[n]$. By general results in toric geometry \cite[Theorem~12.5.3]{Toric}, we have
\begin{equation}\label{eq:chowring}
A^*(\LM_n)=\frac{\Z\big[X_F\;|\;\emptyset \subsetneq F\subsetneq [n]\big]}{\I+\J}
\end{equation}
where the ideals $\I$ and $\J$ are defined by
\[
\I=\big\langle X_FX_G\;|\;F\text{ and }G\text{ are incomparable} \big\rangle
\]
and
\[
\J=\bigg\langle\sum_{i\in F}X_F-\sum_{j\in F}X_F\;\Big|\;i,j\in[n] \bigg\rangle.
\]
The generators $D_F=[X_F]\in A^1(\LM_n)$, are called \emph{boundary divisors}. The Chow ring has a natural grading by codimension
\[
A^*(\LM_n)=\displaystyle\bigoplus_{k=0}^{n-1}A^k(\LM_n)
\] 
and a degree map
\[
\deg_{\LM_n}:A^{n-1}(\LM_n)\rightarrow \Z,
\]
which is a linear isomorphism uniquely determined by the property that the degree of the class of any point is one.

Any divisor $D\in A^1(\LM_n)$ can be written in the form
\[
D=D(x)=\sum_{\emptyset\subsetneq F\subsetneq [n]}x_FD_F\in A^1(\LM_n)
\]
with $x_F\in\Z$ and, in this setting, $D(x)$ is nef if and only if the numbers $x_F$ are submodular, meaning that, for all $F_1,F_2\subseteq[n]$, we have
\begin{equation}\label{eq:submodular}
x_{F_1}+x_{F_2}\geq x_{F_1\cap F_2}+x_{F_1\cup F_2},
\end{equation}
where, by convention, we always assume $x_\emptyset=x_{[n]}=0$. Given a nef divisor $D(x)$, we consider the corresponding polytope $\Pi_n(x)\subseteq\R^n$ defined by
\begin{equation}\label{eq:permhyperplanes}
t_1+\dots+t_n=0\;\;\;\text{ and }\;\;\;\sum_{i\in F}{t_i}\leq x_F\;\;\;\text{ for all }\;\;\;\emptyset\subsetneq F\subsetneq [n].
\end{equation}
These polytopes were studied under the name of \emph{generalized permutahedra} by Postnikov \cite{Postnikov}, wherein several formulas for their volumes were discovered and proved (see Theorem~\ref{thm:postnikov} below). 

By standard results in toric geometry (\cite[Theorem 13.4.3]{Toric}), the volumes of generalized permutahedra can also be derived by computations in the Chow ring:
\begin{equation}\label{eq:volumepolynomialdivisors}
\Vol(\Pi_n(x))=\frac{1}{(n-1)!}\deg_{\LM_n}(D(x)^{n-1}).
\end{equation}
In order to utilize \eqref{eq:volumepolynomialdivisors}, one needs to expand the product $D(x)^{n-1}$, then use relations in $\I$ and $\J$ to write the result as a linear combination of products of the form $D_{F_1}\dots D_{F_{n-1}}$ where the indexing sets form a complete flag
\[
\emptyset\subsetneq F_1\subsetneq \dots\subsetneq F_{n-1}\subsetneq [n], 
\]
then use the fact that, for any complete flag, $\deg_{\LM_n}(D_{F_1}\dots D_{F_{n-1}})=1$. This process was carried out in the more general matroid context by Eur \cite{Eur}, which led to a new formula for volumes of generalized permutahedra (see Theorem~\ref{thm:eur} below). The heart of Eur's argument is figuring out how to systematically express general products of divisors in terms of products of divisors indexed by complete flags. Phrased another way, the difficulty in this computation is dealing with self-intersections of divisors. In the context of Losev-Manin spaces, there is a useful tool for just this type of self-intersection: psi classes.

\subsection{Psi classes on Losev-Manin spaces}

To understand the utility of psi classes, it is useful to discuss the multiplicative structure of $A^*(\LM_n)$. If $F$ and $G$ are two distinct proper subsets of $[n]$, then the corresponding subvarieties $X_F$ and $X_G$ either intersect transversally, or they don't intersect at all. In particular, if $F$ and $G$ are distinct, then
\[
D_FD_G=\begin{cases}
[X_{\emptyset\subsetneq F\subsetneq G\subsetneq [n]}] & \text{if } F\subsetneq G,\\
[X_{\emptyset\subsetneq G\subsetneq F\subsetneq [n]}] & \text{if } G\subsetneq F,\\
0 & \text{ if }F\text{ and }G\text{ are incomparable}.
\end{cases}
\]
More generally, if $F_1,\dots,F_k\subseteq [n]$ are distinct subsets, we have
\[
D_{F_1}\cdots D_{F_k}=\begin{cases}
[X_\F] & \parbox[]{8cm}{if, after possibly relabeling, $F_1,\dots,F_k$ form a flag $\F=(\emptyset\subsetneq F_1\subsetneq\dots\subsetneq F_k\subsetneq [n])$,}\\
0 & \text{if }F_i\text{ and }F_j\text{ are incomparable for some }i,j.
\end{cases}
\]
For convenience, for any flag $\F=(\emptyset\subsetneq F_1\subsetneq\dots\subsetneq F_k\subsetneq [n])$, we define
\[
D_\F=[X_\F]=D_{F_1}\dots D_{F_k}\in A^k(\LM_n).
\]
The main question, then, is: How do we multiply divisors when they are not all indexed by distinct subsets? This is where psi classes are useful. In the setting of Losev-Manin spaces, there are two basic psi classes upon which the others are built.

\begin{definition}
Let $n\geq 1$. The \emph{psi class} $\psi_0\in A^1(\LM_n)$ is the first Chern class of the line bundle $\bL_0$, whose fiber over a point $[C,p_1,\dots,p_n]\in\LM_n$ is the cotangent line of $C$ at $0$. The psi class $\psi_\infty\in A^1(\LM_n)$ is the first Chern class of the line bundle $\bL_\infty$, whose fiber over a point $[C,p_1,\dots,p_n]\in\LM_n$ is the cotangent line of $C$ at $\infty$. 
\end{definition}

A  more combinatorial characterization of psi classes, which will be our starting point for the matroid generalization, appears in Lemma~\ref{lem:psilinearcombo} below. To understand why the psi classes are useful for computing self-intersections, we require a bit of additional notation. For a finite set $F$, let $\LM_F$ denote the Losev-Manin space with marked points indexed by $F$. Of course, $\LM_{[n]}=\LM_n$. If $|F|>2$, then for each $i\in F$, there is a \emph{forgetful map}
\[
f_i:\LM_F\rightarrow\LM_{F\setminus\{i\}}.
\]
For each point $[C;(p_j)_{j\in F}]\in\LM_F$, the function $f_i$ forgets the marked point $p_i$ and then, if the component that contained $p_i$ no longer has any marked points, it contracts that entire component to a single point. The second step is necessary in order to insure that the image of $f$ is a stable configuration. 

More generally, if $\emptyset\subsetneq G\subseteq F$, then there is a forgetful map
\[
r_G:\LM_F\rightarrow\LM_G.
\]
To define this map, label the points $F\setminus G=\{i_1,\dots,i_k\}$ and define
\[
r_G=f_{i_1}\circ\dots\circ f_{i_k}.
\]
In other words, $r_G$ forgets the points that are not in $G$. We use the letter $r$ for ``remember'' because the map $r_G$ remembers the points in the index set $G$. It follows from the definition that the order of the composition in the definition of $r_G$ is irrelevant, and if $\emptyset\subsetneq G_1\subseteq G_2\subseteq F$, then
\begin{equation}\label{eq:composerememberpsi}
r_{G_1}=r_{G_1}\circ r_{G_2}.
\end{equation}
Using the forgetful maps, we obtain a more general set of psi classes.

\begin{definition}\label{def:generalpsi}
For $n\geq 1$ and $\emptyset\subseteq F\subseteq [n]$, define classes $\psi_F^-,\psi_F^+\in A^1(\LM_n)$ by
\[
\psi_F^-=r_F^*(\psi_\infty)\;\;\;\text{ and }\;\;\;\psi_F^+=r_{F^c}^*(\psi_0),
\]
where $r_F^*$ is the pullback of $r_F:\LM_n\rightarrow\LM_F$ and $F^c=[n]\setminus F$.
\end{definition}

Notice that $\psi_0=\psi_{\emptyset}^+$ and $\psi_\infty=\psi_{[n]}^-$. The reason we introduce psi classes is because they naturally arise when self-intersecting divisors in the following way.

\begin{lemma}\label{lem:selfintersection}
If $F$ is a proper subset of $[n]$, then
\[
D_F^2=D_F(-\psi_F^--\psi_F^+)\in A^2(\LM_n).
\]
\end{lemma}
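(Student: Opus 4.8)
The plan is to reduce the statement to a known fact about toric intersection theory, namely the behavior of self-intersections of torus-invariant divisors in a smooth toric variety, translated through the identification of $\LM_n$ with the permutahedral toric variety. First I would recall that $X_F = \overline{\T_{(\emptyset \subsetneq F \subsetneq [n])}}$ is a torus-invariant prime divisor, so $D_F$ is the class of the ray of the permutahedral fan corresponding to the subset $F$. In a smooth complete toric variety, the self-intersection $D_F^2$ can be computed by the standard device: pick any rational function (a character) $m$ whose associated principal divisor contains $X_F$ with coefficient $1$, subtract it off to get a divisor linearly equivalent to $D_F$ but not containing $X_F$ in its support, and restrict to $X_F$. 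Concretely, $D_F^2 = D_F \cdot (D_F - \operatorname{div}(m))\big|_{X_F}$, and the right-hand side is a combination of the boundary divisors of $X_F$ meeting $X_F$.

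The key computational step is to choose $m$ cleverly and identify the resulting class. Using the linear relations in $\J$ — namely that $\sum_{i \in G} D_G = \sum_{j \in G} D_G$ is independent of the chosen element, equivalently $\sum_{G \ni e} D_G$ is a fixed class $\delta_e$ for every $e \in [n]$ — one writes $D_F$ as a difference of two such sums. The natural choice is to fix an element $e \in F$ and an element $e' \notin F$, and use the relation to replace one copy of $D_F$: after the substitution one finds
\[
D_F^2 = D_F\Big(\sum_{G \ni e,\ G \neq F} D_G \;-\; \sum_{G \supsetneq F} D_G\Big)_{\!\!e} \;+\; D_F\Big(\cdots\Big)_{e'},
\]
and regrouping, the terms indexed by flats comparable to $F$ that survive multiplication by $D_F$ assemble precisely into $-\psi_F^- - \psi_F^+$ as given by (the matroid-style formula, or here) Definition~\ref{def:generalpsi}. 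The cleanest route is probably to invoke the combinatorial description of $\psi_F^\pm$ (the analogue of Lemma~\ref{lem:psilinearcombo}, which expresses $\psi_\infty = \psi_{[n]}^-$ and $\psi_0 = \psi_\emptyset^+$ as explicit linear combinations of boundary divisors and then pulls back along $r_F$) and check the identity $D_F^2 + D_F\psi_F^- + D_F\psi_F^+ = 0$ term by term: every divisor $D_G$ with $G$ incomparable to $F$ contributes $0$ after multiplying by $D_F$, and the contributions of the comparable $D_G$ cancel in pairs once one accounts for the two relations (one using $e \in F$, one using $e' \notin F$).

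The main obstacle I anticipate is bookkeeping: correctly matching the pullback formulas $\psi_F^- = r_F^*\psi_\infty$, $\psi_F^+ = r_{F^c}^*\psi_0$ against the explicit divisor expansions, in particular verifying that pulling back along the forgetful maps $r_F$ and $r_{F^c}$ sends boundary divisors of the smaller Losev–Manin spaces to the correct (sums of) boundary divisors $D_G$ on $\LM_n$ — boundary divisors can pull back to sums over all ways of distributing the forgotten points. Once that dictionary is pinned down, the cancellation is forced by the two submodular/linear relations in $\J$, and no genuinely new input is needed; alternatively, one can sidestep the forgetful-map bookkeeping entirely by taking the combinatorial formula of Lemma~\ref{lem:psilinearcombo} as the definition of $\psi_F^\pm$ and then the whole lemma becomes a direct manipulation of the relations $\I$ and $\J$, which is how it will be generalized to matroids in the next section.
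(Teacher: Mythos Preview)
Your proposal is correct, but it takes a different route from the paper's proof of Lemma~\ref{lem:selfintersection}. The paper argues geometrically: it cites the standard identification of the normal bundle of $X_F$ in $\LM_n$ as $g_F^*\big(r_F^*(\bL_\infty^\vee)\otimes r_{F^c}^*(\bL_0^\vee)\big)$, so that $D_F^2 = g_{F*}c_1(N_{X_F/\LM_n})$ immediately gives $D_F(-\psi_F^- - \psi_F^+)$ by definition of $\psi_F^\pm$ as pullbacks of $\psi_\infty$ and $\psi_0$. No combinatorics enters; the result is read off from the geometry of the boundary stratum.

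Your approach, by contrast, is algebraic: manipulate the relations in $\I$ and $\J$ directly, using the explicit divisor expansion of $\psi_F^\pm$ from Lemma~\ref{lem:psilinearcombo}. This is valid, and in fact it is \emph{exactly} the argument the paper gives later for the matroid analogue (Proposition~\ref{prop:selfintersection}): add and subtract $\sum_{e\in G}D_G$, multiply through by $D_F$, and use incomparability to kill the cross terms, leaving precisely $-\psi_F^- - \psi_F^+$. The paper's version uses a single element $e$ rather than your pair $e\in F$, $e'\notin F$, but the mechanism is the same. The geometric proof explains conceptually \emph{why} the psi classes appear (they are Chern classes of cotangent lines at the node), while your combinatorial proof is self-contained and, as you correctly anticipate, is the one that survives the passage to arbitrary matroids where no underlying variety is available.
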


\begin{proof}[Proof sketch]
This follows from the observation (see, for example, \cite[Lemma 25.2.2]{Mirror}) that the normal bundle of $X_F$ in $\LM_n$ is
\[
g_F^*(r_F^*(\bL_\infty^\vee)\otimes r_{F^c}^*(\bL_0^\vee)),
\]
where $g_F:X_F\rightarrow\LM_n$ is the inclusion.
\end{proof}

In particular, Lemma~\ref{lem:selfintersection} allows us to compute any product of boundary divisors in terms of psi classes. We have the following immediate corollary.

\begin{corollary}\label{cor:intersectdivisors}
If $F_1,\dots,F_k\subseteq [n]$ are distinct proper subsets and $d_1,\dots,d_k$ are positive integers, then
\[
D_{F_1}^{d_1}\cdots D_{F_k}^{d_k}=\begin{cases}
D_\F\displaystyle\prod_{i=1}^k(-\psi_{F_i}^--\psi_{F_i}^+)^{d_i-1} & \parbox[]{8cm}{if, after possibly relabeling, $F_1,\dots,F_k$ form a flag $\F=(\emptyset\subsetneq F_1\subsetneq\dots\subsetneq F_k\subsetneq [n])$,}\\
0 & \text{if }F_i\text{ and }F_j\text{ are incomparable for some }i,j.
\end{cases}
\]
\end{corollary}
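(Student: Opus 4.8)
The plan is to derive this directly from Lemma~\ref{lem:selfintersection} together with the multiplicative relations recorded above. First I would dispose of the incomparable case: if $F_i$ and $F_j$ are incomparable for some pair $i\neq j$, then $D_{F_i}D_{F_j}=0$ since this product lies in the ideal $\I$, and hence the entire monomial $D_{F_1}^{d_1}\cdots D_{F_k}^{d_k}$ vanishes in $A^*(\LM_n)$. So it remains to treat the case in which, after relabeling, the $F_i$ form a flag $\F=(\emptyset\subsetneq F_1\subsetneq\dots\subsetneq F_k\subsetneq[n])$.

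The key step is the single-index identity $D_F^{d}=D_F(-\psi_F^--\psi_F^+)^{d-1}$ for every proper subset $F$ and every integer $d\geq 1$, which I would prove by induction on $d$. The base case $d=1$ is trivial and $d=2$ is exactly Lemma~\ref{lem:selfintersection}. For the inductive step, write $D_F^{d+1}=D_F^{d}\cdot D_F=D_F(-\psi_F^--\psi_F^+)^{d-1}\cdot D_F=D_F^2(-\psi_F^--\psi_F^+)^{d-1}$ and substitute $D_F^2=D_F(-\psi_F^--\psi_F^+)$ once more; commutativity of $A^*(\LM_n)$ makes all the rearrangements legitimate.

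Finally I would assemble the full product. Applying the single-index identity to each factor gives
\[
D_{F_1}^{d_1}\cdots D_{F_k}^{d_k}=\prod_{i=1}^k D_{F_i}(-\psi_{F_i}^--\psi_{F_i}^+)^{d_i-1}=\Big(\prod_{i=1}^k D_{F_i}\Big)\prod_{i=1}^k(-\psi_{F_i}^--\psi_{F_i}^+)^{d_i-1},
\]
again using commutativity to collect the squarefree part. Since the $F_i$ form the flag $\F$, we have $\prod_{i=1}^k D_{F_i}=[X_\F]=D_\F$ by the multiplicativity relation stated above, and this yields the claimed formula.

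I do not expect any genuine obstacle here, since the statement is an immediate corollary; the only points requiring care are the case split and arranging the induction so that the psi factors commute past the divisor factors, which is automatic as $A^*(\LM_n)$ is commutative. The one thing worth double-checking is that Lemma~\ref{lem:selfintersection} applies to every proper $F$ (including extreme cases), which it does, as $\psi_F^{\pm}$ are defined for all $\emptyset\subseteq F\subseteq[n]$.
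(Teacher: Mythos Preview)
Your argument is correct and matches the paper's intent: the paper presents this as an ``immediate corollary'' of Lemma~\ref{lem:selfintersection} with no further proof (and in the matroid analogue, Corollary~\ref{cor:selfintersectionmatroid}, explicitly says it follows by ``repeatedly applying'' the self-intersection formula), which is exactly what your induction and commutativity argument carry out. Your case split and the single-index induction are the right way to unpack the word ``immediate'' here.
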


In order to utilize psi classes in the volume computation of Equation~\ref{eq:volumepolynomialdivisors}, it remains to understand how to compute the degree of expressions of the form in Corollary~\ref{cor:intersectdivisors}. The next result reduces these computations to computing degrees of monomials in $\psi_0$ and $\psi_\infty$.

\begin{lemma}\label{lem:product}
If $\F=(\emptyset = F_0\subsetneq F_1\subsetneq\dots\subsetneq F_k\subsetneq F_{k+1}=[n])$ is a flag of subsets and $a_0^+,a_1^-,a_1^+,\dots,a_{k}^-,a_k^+,a_{k+1}^-$ are nonnegative integers, then
\[
\deg_{\LM_n}\bigg(D_\F\prod_{i=0}^{k}(\psi_{F_{i}}^+)^{a_{i}^+}(\psi_{F_{i+1}}^-)^{a_{i+1}^-}\bigg)=\prod_{i=0}^{k}\deg_{\LM_{F_{i+1}\setminus F_i}}\big(\psi_0^{a_{i}^+}\psi_\infty^{a_{i+1}^-}\big).
\]
\end{lemma}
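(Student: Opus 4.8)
The plan is to prove the formula by peeling off one factor of the flag at a time, reducing $\LM_n$ to a product of smaller Losev--Manin spaces via the stratum $X_\F$. The key geometric input is that the stratum $X_\F=\overline{\T_\F}$ is itself a product of Losev--Manin spaces: since a point of $\T_\F$ is a chain with components $C_0,\dots,C_k$ and marked points distributed so that $C_i$ carries exactly the points indexed by $F_{i+1}\setminus F_i$, and since each $C_i$ has its own ``$0$'' and ``$\infty$'' (the nodes, or the global $0$ and $\infty$ at the two ends), we get a canonical isomorphism
\[
X_\F\;\cong\;\LM_{F_1\setminus F_0}\times\LM_{F_2\setminus F_1}\times\dots\times\LM_{F_{k+1}\setminus F_k}.
\]
Under this isomorphism, the degree map $\deg_{X_\F}$ on $A^{\mathrm{top}}(X_\F)$ factors as the product of the degree maps $\deg_{\LM_{F_{i+1}\setminus F_i}}$, because the class of a point in a product is the exterior product of point classes. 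So the first step is to record this identification and the compatibility of degree maps, and to note $\deg_{\LM_n}(D_\F\cdot\gamma)=\deg_{X_\F}(g_\F^*\gamma)$ for the inclusion $g_\F\colon X_\F\hookrightarrow\LM_n$, by the projection formula together with $g_{\F*}[X_\F]=D_\F$.

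The second step is to identify the pullbacks $g_\F^*\psi_{F_i}^{\pm}$ in terms of the psi classes on the factors. By Definition~\ref{def:generalpsi}, $\psi_{F_i}^+=r_{F_i^c}^*(\psi_0)$ and $\psi_{F_{i+1}}^-=r_{F_{i+1}}^*(\psi_\infty)$. Restricting the forgetful map $r_{F_i^c}$ to $X_\F$ and using \eqref{eq:composerememberpsi}, I claim the composition $X_\F\hookrightarrow\LM_n\xrightarrow{r_{F_i^c}}\LM_{F_i^c}$ factors through the projection of $X_\F$ onto the factor $\LM_{F_{i+1}\setminus F_i}$ (on the nose, or after a further forgetful map, which doesn't change the pullback of $\psi_0$ since $\psi_0=\psi_\emptyset^+$ is itself a pullback), and that under this factorization $\psi_0$ on $\LM_{F_i^c}$ pulls back to $\psi_0$ on $\LM_{F_{i+1}\setminus F_i}$, because the point $0\in\LM_{F_i^c}$ corresponds precisely to the $0$-end of the component $C_i$. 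The mnemonic is: $\psi_{F_i}^+$ ``sees'' only the component immediately to the right of $F_i$, and records the cotangent line at the left node of that component; dually, $\psi_{F_{i+1}}^-$ sees only the component immediately to the left of $F_{i+1}$ — which is the same component $C_i$ — and records the cotangent line at its right node. Hence $g_\F^*\big((\psi_{F_i}^+)^{a_i^+}(\psi_{F_{i+1}}^-)^{a_{i+1}^-}\big)$ is, under the product decomposition, supported entirely on the $i$th factor and equals $\psi_0^{a_i^+}\psi_\infty^{a_{i+1}^-}$ there.

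Combining the two steps: $g_\F^*$ of the full product $\prod_{i=0}^k(\psi_{F_i}^+)^{a_i^+}(\psi_{F_{i+1}}^-)^{a_{i+1}^-}$ becomes the exterior product over $i=0,\dots,k$ of the classes $\psi_0^{a_i^+}\psi_\infty^{a_{i+1}^-}$ on $\LM_{F_{i+1}\setminus F_i}$, and applying $\deg_{X_\F}=\prod_i\deg_{\LM_{F_{i+1}\setminus F_i}}$ gives the claimed product formula. I expect the main obstacle to be the bookkeeping in the second step: making precise the claim that the restricted forgetful maps factor through the correct projections, and checking that $0$ and $\infty$ on the small spaces match up with the appropriate nodes of the chain — in particular handling the boundary factors $i=0$ and $i=k$ (where the relevant ``node'' is the global $0$ or $\infty$) and the degenerate cases where some $F_{i+1}\setminus F_i$ is a singleton (so $\LM_{F_{i+1}\setminus F_i}$ is a point and the corresponding factor must be $1$, forcing $a_i^+=a_{i+1}^-=0$ in any nonzero term). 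Since the statement is asserted to be well-known, I would cite the product structure of strata in $\LM_n$ (e.g. from \cite{LM} or \cite{Mirror}) rather than reprove it, and keep the argument at the level of a proof sketch as the surrounding exposition does.
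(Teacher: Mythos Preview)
Your proposal is correct and follows essentially the same approach as the paper's proof sketch: apply the projection formula to pass from $\deg_{\LM_n}(D_\F\cdot-)$ to $\deg_{X_\F}(g_\F^*-)$, use the product decomposition $X_\F\cong\prod_i\LM_{F_{i+1}\setminus F_i}$, and identify $g_\F^*\psi_{F_i}^+=p_i^*\psi_0$ and $g_\F^*\psi_{F_{i+1}}^-=p_i^*\psi_\infty$ so that the degree factors as a product. The paper simply asserts the pullback identities directly rather than routing them through the factorization of forgetful maps that you outline, but this is the same argument with slightly less bookkeeping spelled out.
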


Pictorially, we think of the psi classes $\psi_{F_i}^\pm$ as being associated to the left and right side of the node indexed by $F_i$:
\begin{center}
\tikz{
\draw[thick] (-7,0.1) edge[-,bend right] (-10,1.5);
\draw[thick] (-7.5,0.2) edge[-,bend left] (-4,0);
\draw[thick] (-2,0) edge[-,bend left] (1.5,0.2);
\draw[thick] (1,0.1) edge[-,bend left] (4,1.5);
\node [] at (-3,0) {$\boldsymbol{\cdots}$};
\node [] at (-9.5,1.5) {$\bullet$};
\node [] at (-9.5,1.2) {$0$};
\node [] at (-8.75,1.4) {$/$};
\node [] at (-8.25,1.2) {$/$};
\node [] at (-7.85,1) {$/$};
\node [] at (-8.5,0.5) {$F_1\setminus F_0$};
\node [] at (-6.35,0.55) {$\backslash$};
\node [] at (-5.70,0.58) {$|$};
\node [] at (-5.05,0.5) {$/$};
\node [] at (-5.75,0.05) {$F_2\setminus F_1$};
\node [] at (0.35,0.55) {$/$};
\node [] at (-0.5,0.58) {$|$};
\node [] at (-1.35,0.35) {$\backslash$};
\node [] at (-0.40,-0.15) {$F_{k}\setminus F_{k-1}$};
\node [] at (3.5,1.5) {$\bullet$};
\node [] at (3.5,1.2) {$\infty$};
\node [] at (2.75,1.4) {$\backslash$};
\node [] at (2.25,1.2) {$\backslash$};
\node [] at (1.80,0.9) {$\backslash$};
\node [] at (3,0.45) {$F_{k+1}\setminus F_{k}$};
\node [] at (-9.5,2.5) {$\psi_{F_0}^+$};
\draw[thick] (-9.5,2.2) edge[->,bend left] (-9.35,1.6);
\node [] at (-7.5,2) {$\psi_{F_1}^-$};
\node [] at (-6.5,1.7) {$\psi_{F_1}^+$};
\draw[thick] (-7.5,1.7) edge[->,bend left] (-7.4,.7);
\draw[thick] (-6.7,1.4) edge[->,bend right] (-7,.6);
\node [] at (.5,1.7) {$\psi_{F_k}^-$};
\node [] at (1.3,2) {$\psi_{F_k}^+$};
\draw[thick] (.6,1.35) edge[->,bend left] (.96,.6);
\draw[thick] (1.3,1.7) edge[->,bend right] (1.35,.7);
\node [] at (3.5,2.5) {$\psi_{F_{k+1}}^-$};
\draw[thick] (3.4,2.2) edge[->,bend right] (3.35,1.6);
}
\end{center}
The products in Lemma~\ref{lem:product} are over all of the components of the curves, which should help explain the indices in the products.

\begin{proof}[Proof sketch of Lemma~\ref{lem:product}]
Let $g_\F: X_\F\rightarrow \LM_n$ be the inclusion. By the projection formula,
\begin{equation}\label{eq:degreeproduct}
\deg_{\LM_n}\bigg(D_\F\prod_{i=0}^{k}(\psi_{F_{i}}^+)^{a_{i}^+}(\psi_{F_{i+1}}^-)^{a_{i+1}^-}\bigg)=\deg_{X_\F}\bigg(g_\F^*\bigg(\prod_{i=0}^{k}(\psi_{F_{i}}^+)^{a_{i}^+}(\psi_{F_{i+1}}^-)^{a_{i+1}^-}\bigg)\bigg)
\end{equation}
Notice that
\[
X_\F=\prod_{i=0}^{k}\LM_{F_{i+1}\setminus{F_{i}}}.
\]
If $p_i:X_\F\rightarrow\LM_{F_{i+1}\setminus{F_{i}}}$ is the projection onto the $i$th component of this product, then 
\[
g_\F^*(\psi^+_{F_{i}})=p_{i}^*(\psi_0) \;\;\;\text{ and }\;\;\; g_\F^*(\psi^-_{F_{i+i}})=p_{i}^*(\psi_\infty).
\]
Thus, the degree in the right-hand side of \eqref{eq:degreeproduct} can be computed as a product of degrees on each factor:
\begin{align*}
\deg_{X_\F}\bigg(g_\F^*\bigg(\prod_{i=0}^{k}(\psi_{F_{i}}^+)^{a_{i}^+}(\psi_{F_{i+1}}^-)^{a_{i+1}^-}\bigg)\bigg)
&=\deg_{X_\F}\bigg(\prod_{i=0}^{k}p_i^*\Big(\psi_0^{a_{i-1}^+}\psi^{a_i^-}_\infty\Big)\bigg)\\
&=\prod_{i=0}^{k}\deg_{\LM_{F_{i+1}\setminus F_{i}}}\Big(\psi_0^{a_{i}^+}\psi^{a_{i+1}^-}_\infty\Big).\qedhere
\end{align*}
\end{proof}

Lastly, we simply need to know how to compute degrees of monomials in $\psi_0$ and $\psi_\infty$. The next result accomplishes that.

\begin{lemma}\label{lem:degreeofpsi}
If $n>1$ and $a$ and $b$ are nonnegative integers, then
\[
\deg_{\LM_n}(\psi_0^{a}\psi_\infty^{b})={n-1 \choose a,b},
\]
where, for any nonnegative integers $k,\ell,m$,
\[
{m\choose k,\ell}=
\begin{cases}
{m\choose k}={m\choose \ell}=\frac{m!}{k!\ell!} &\text{if }k+\ell=m,\\
0&\text{else.}
\end{cases}
\]
\end{lemma}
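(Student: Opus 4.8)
The plan is to prove the formula by induction on $n$, using a geometric recursion that comes from the forgetful maps. First I would dispose of the base case $n=2$: here $\LM_2$ is a single point (the permutahedron in dimension $1$ is a point... wait, $\dim\LM_2 = 1$), so $\dim\LM_2 = 1$ and we need $\deg_{\LM_2}(\psi_0^a\psi_\infty^b)$ to vanish unless $a+b=1$, in which case it is $1$; this is immediate since $\psi_0,\psi_\infty\in A^1(\LM_2)$ and $A^1(\LM_2)\cong\Z$ with $\psi_0$ and $\psi_\infty$ each (up to checking a sign/normalization) generating it, or can be seen by direct toric computation on $\P^1$.

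For the inductive step, the key tool is the string/dilaton-type relation expressing a psi class on $\LM_n$ as the pullback of a psi class from $\LM_{n-1}$ plus a correction supported on a boundary divisor. Concretely, using the forgetful map $f_i:\LM_n\to\LM_{n-1}$ that forgets the point $p_i$, one has a comparison of the form
\[
\psi_0 = f_i^*(\psi_0) + (\text{boundary term}),
\]
where the boundary term is the class $D_{\{i\}}$ of the stratum on which $p_i$ sits alone on the first component next to $0$; a symmetric statement holds for $\psi_\infty$ with the divisor $D_{[n]\setminus\{i\}}$. The cleaner route, and the one I would actually pursue, is to work on a fixed boundary divisor: pick a point $i$ and use that $\LM_n$ has the stratum $X_{\{i\}}$ on which the first component carries only $p_i$; then $\psi_0\cdot[X_{\{i\}}]$ can be rewritten since $\psi_0$ restricted to that one-pointed component is trivial, etc. Either way, the combinatorial identity I am aiming to extract is the Pascal recursion
\[
\deg_{\LM_n}(\psi_0^a\psi_\infty^b) = \deg_{\LM_{n-1}}(\psi_0^{a-1}\psi_\infty^b) + \deg_{\LM_{n-1}}(\psi_0^a\psi_\infty^{b-1}),
\]
valid when $a,b\geq 1$, together with the boundary cases $\deg_{\LM_n}(\psi_0^{n-1}) = \deg_{\LM_n}(\psi_\infty^{n-1}) = 1$ (each of these reduces, via the forgetful/restriction maps, to the same statement one dimension down, and ultimately to the point). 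Since $\binom{n-1}{a,b}$ satisfies exactly this recursion and these initial conditions, the claimed closed form follows.

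The main obstacle I expect is pinning down the precise comparison formula between $\psi_0$ on $\LM_n$ and its pullback from $\LM_{n-1}$, i.e., getting the boundary correction term and its coefficient exactly right (the analogue of Kontsevich's $\psi_i = f^*\psi_i + D_{i,n}$ on $\overline{\mathcal M}_{0,n}$), and then checking that the self-intersection contributions from the correction term, when expanded via Lemma~\ref{lem:selfintersection}, telescope correctly so that only the two surviving terms of the Pascal recursion remain. This is a standard but slightly delicate bookkeeping step; alternatively, one can sidestep it entirely by invoking the toric description of $\LM_n$ as the permutahedral variety and computing $\deg_{\LM_n}(\psi_0^a\psi_\infty^b)$ as a mixed volume / intersection number of the two nef divisor classes corresponding to $\psi_0$ and $\psi_\infty$ — on the permutahedron these pair to give the binomial coefficient directly. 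I would present the inductive argument as the main proof and mention the toric computation as an alternative, since the latter is exactly the kind of statement that "can be found (or derived) in the literature" per the section's stated conventions.
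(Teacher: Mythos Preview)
The paper does not actually supply a proof of this lemma: its ``proof sketch'' simply points to the literature (the standard argument for psi-class monomials on $\overline{\mathcal M}_{0,n}$ in \cite{Mirror}, and an exercise in \cite{Cavalieri}). So there is no detailed argument here to compare your proposal against.

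That said, your inductive plan is exactly the one the paper carries out later in the matroid generality (Proposition~\ref{prop:degreeofpsi}). In the Boolean matroid $\sM=([n],2^{[n]})$ every element is a coloop, so the coloop branch of that deletion--contraction argument specializes precisely to the Pascal recursion you wrote down. The comparison formula you say you need is already recorded in the paper (proof sketch of Lemma~\ref{lem:psilinearcombo}): $f_i^*(\psi_0)=\psi_0+D_{\{i\}}$ and $f_i^*(\psi_\infty)=\psi_\infty+D_{\{i\}^c}$. And the ``telescoping'' you flag as the main obstacle collapses once you observe the vanishings
\[
\psi_0\cdot D_{\{i\}}=0,\qquad \psi_\infty\cdot D_{\{i\}^c}=0,\qquad D_{\{i\}}\cdot D_{\{i\}^c}=0,
\]
the first two because, writing $\psi_0=\sum_{i\notin G}D_G$ and $\psi_\infty=\sum_{i\in G}D_G$, every $G$ appearing is incomparable with $\{i\}$ (resp.\ $\{i\}^c$), and the third because $\{i\}$ and $\{i\}^c$ are incomparable. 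With these in hand the binomial expansion of $(\psi_0)^a(\psi_\infty)^b$ in terms of pullbacks and boundary corrections reduces immediately to two surviving terms, giving the recursion. So your plan is correct and is, in effect, the paper's own argument transplanted back from the matroid setting; the toric/mixed-volume alternative you mention is also fine but unnecessary.

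One small correction to your base case: $\dim\LM_2=1$ (not $0$), as you caught mid-sentence; indeed $\LM_2\cong\P^1$, and the claim there is just that $\deg(\psi_0)=\deg(\psi_\infty)=1$, which is immediate from Lemma~\ref{lem:psilinearcombo} since each is a single boundary point.
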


\begin{proof}[Proof sketch]
To our knowledge, this exact result is not stated in the literature anywhere. However, it is well known and follows, using the results of \cite{AG}, from the same arguments used to compute degrees of monomials of psi classes on $\overline{\mathcal{M}}_{0,n}$ (see \cite[Section 25.2]{Mirror}). In the specific setting of $\LM_n$, this result is given as Exercise 52 in \cite{Cavalieri}. 
\end{proof}

The combination of the previous three results tell us everything we need to know about effectively computing degrees of products of boundary divisors, such as those that appear in the right-hand side of \eqref{eq:volumepolynomialdivisors}. We illustrate such a computation in the next example.

\begin{example}
Let $n=7$ and consider the sets
\[
F_1=\{1,2\},\;\;\; F_2=\{1,2,3,4,5\},\;\;\;\text{ and }\;\;\; F_3=\{1,2,3,4,5,6\}.
\]
Let us compute $\deg_{\LM_7}(D_{F_1}^3D_{F_2}^2D_{F_3})$. By Corollary~\ref{cor:intersectdivisors}, we have
\[
D_{F_1}^3D_{F_2}^2D_{F_3}=D_{F_1}D_{F_2}D_{F_3}(-\psi_{F_1}^--\psi_{F_1}^+)^2(-\psi_{F_2}^--\psi_{F_2}^+).
\]
Expanding the polynomial, we obtain
\[
-D_{F_1}D_{F_2}D_{F_3}\Big((\psi_{F_1}^-)^2\psi_{F_2}^-+2\psi_{F_1}^-\psi_{F_1}^+\psi_{F_2}^-+(\psi_{F_1}^+)^2\psi_{F_2}^-+(\psi_{F_1}^-)^2\psi_{F_2}^++2\psi_{F_1}^-\psi_{F_1}^+\psi_{F_2}^++(\psi_{F_1}^+)^2\psi_{F_2}^+\Big).
\]
Using Lemmas~\ref{lem:product} and \ref{lem:degreeofpsi}, we see that the degree of the first monomial is zero, because the first term in the product of binomials is
\[
{2-0-1\choose 0,2}={1\choose 0,2}=0.
\]
By a similar argument, the degree is zero on all of the monomials except for the second one. The degree of the second monomial is
\begin{align*}
\deg\Big(-D_{F_1}D_{F_2}D_{F_3}2\psi_{F_1}^-\psi_{F_1}^+\psi_{F_2}^-\Big)&=-2{2-0-1\choose 0,1}{5-2-1\choose 1,1}{6-5-1\choose 0,0}{7-6-1\choose 0,0}\\
&=-2(1)(2)(1)(1)=-4.
\end{align*}
Thus, we conclude that $\deg_{\LM_7}(D_{F_1}^3D_{F_2}^2D_{F_3})=-4.$
\end{example}

Since our ultimate goal is to generalize psi classes to the combinatorial setting of matroids, we present one final result, which characterizes the psi classes as linear combinations of boundary divisors.

\begin{lemma}\label{lem:psilinearcombo}
For any subset $F\subseteq [n]$ and any $i\in[n]$,
\[
\psi_F^-=\sum_{\emptyset\subsetneq G\subsetneq[n]\atop i\in G} D_G-\sum_{\emptyset\subsetneq G\subsetneq[n]\atop G\supseteq F}D_G\;\;\;\text{ and }\;\;\;\psi_F^+=\sum_{\emptyset\subsetneq G\subsetneq[n]\atop i\notin G} D_G-\sum_{\emptyset\subsetneq G\subsetneq[n]\atop G\subseteq F}D_G.
\]
In particular, taking $F=\emptyset$ and $F=[n]$, respectively, we obtain
\[
\psi_0=\sum_{\emptyset\subsetneq G\subsetneq[n]\atop i\notin G} D_G\;\;\;\text{ and }\;\;\;\psi_\infty=\sum_{\emptyset\subsetneq G\subsetneq[n] \atop i\in G} D_G.
\]
\end{lemma}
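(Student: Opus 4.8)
The plan is to verify the two stated identities by pulling back the basic psi classes $\psi_0$ and $\psi_\infty$ along the forgetful maps $r_F$ and $r_{F^c}$, using the known description of how boundary divisors behave under forgetful maps. The strategy has three parts: (i) establish the formulas for $\psi_0$ and $\psi_\infty$ themselves (the special cases $F = \emptyset$ and $F = [n]$), (ii) compute the pullback of a boundary divisor along a forgetful map $f_i$, and (iii) combine these to get the general formula.

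First I would handle $\psi_\infty$ (and $\psi_0$ by the symmetric argument $0 \leftrightarrow \infty$). The cleanest route is to use the known relation on $\LM_n$ expressing $\psi_\infty$ as a sum of boundary divisors: on a chain of projective lines, $\psi_\infty$ is the cotangent line at $\infty$, and there is a standard formula (the analogue of the comparison $\psi_i = \sum D$ over boundary divisors separating $i$ from a reference point on $\M_{0,n}$) giving $\psi_\infty = \sum_{i \in G} D_G$ for any fixed $i \in [n]$. Concretely, one can fix $i$ and compare $\psi_\infty$ with the pullback of $\psi_\infty$ under the map $r_{\{i,j\}}$ to the one-dimensional $\LM_{\{i,j\}} \cong \P^1$, on which $\psi_\infty = 0$; the difference between $\psi_\infty$ and $r_{\{i,j\}}^*\psi_\infty$ is, by the comparison-of-psi-classes formula for forgetful maps, exactly the sum of boundary divisors of the components lying ``to the right of'' the node, i.e. $\sum_{\emptyset \subsetneq G \subsetneq [n],\, i \in G} D_G$. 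That the answer is independent of $i$ then also follows, since each such identity holds for every choice of $i$; alternatively, independence is visible directly from the relations in $\J$. This gives the ``in particular'' statement.

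Next, for the general $F$, I would use the definition $\psi_F^- = r_F^*(\psi_\infty)$ together with the pullback formula for boundary divisors under a single forgetful map $f_j : \LM_{S} \to \LM_{S \setminus \{j\}}$: for a proper subset $H \subseteq S \setminus \{j\}$, one has $f_j^*(D_H) = D_H + D_{H \cup \{j\}}$ if both $H$ and $H\cup\{j\}$ are proper, with the convention that a term is dropped if its index equals $\emptyset$ or the full ground set. Iterating over all $j \in F^c$ to build $r_F$, the pullback of $D_H$ (for $\emptyset \subsetneq H \subsetneq F$) becomes $\sum_{G} D_G$ summed over all proper $G$ with $G \cap F = H$, i.e. all $G$ obtained from $H$ by adjoining an arbitrary subset of $F^c$. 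Applying this to the formula $\psi_\infty = \sum_{i \in H} D_H$ on $\LM_F$ (fixing some $i \in F$): the pullback is $\sum_{G:\, i \in G \cap F} D_{G}$ where the sum ranges over proper $G \subseteq [n]$, but one must be careful about which such $G$ actually appear — precisely those with $G \cap F \ne \emptyset$ and $G \cap F \ne F$, together with correction terms coming from the dropped ``$= \emptyset$ or $= $ full set'' cases. Reconciling these boundary corrections is what turns the raw pullback sum into the stated difference $\sum_{i \in G} D_G - \sum_{G \supseteq F} D_G$.

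The main obstacle I anticipate is precisely this bookkeeping of degenerate index sets in the iterated pullback — keeping track of when $G \cap F$ ranges over a proper nonempty subset of $F$ versus when it hits $\emptyset$ or all of $F$, and matching the ``missing'' terms against the subtracted sum $\sum_{G \supseteq F} D_G$. A clean way to sidestep some of this is to instead prove the identity by a direct argument on $\LM_n$: interpret $r_F^*(\psi_\infty)$ as the cotangent line at $\infty$ of the curve obtained after contracting all components carrying only points of $F^c$, and then use the self-intersection/comparison formula (Lemma~\ref{lem:selfintersection}) to express the difference between $\psi_\infty$ on $\LM_n$ and $r_F^*(\psi_\infty)$ as a sum of the boundary divisors $D_G$ that get contracted, namely those $G$ with $F \subseteq G$. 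This gives $\psi_F^- = \psi_\infty - \sum_{G \supseteq F} D_G = \sum_{i \in G} D_G - \sum_{G \supseteq F} D_G$ directly, and $\psi_F^+$ follows by the $0 \leftrightarrow \infty$, $F \leftrightarrow F^c$ symmetry. Since the lemma is stated as well-known with references, I would present whichever of these two arguments is shortest and cite \cite{Mirror, AG} for the underlying forgetful-map and psi-comparison formulas.
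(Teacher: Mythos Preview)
Your proposal is correct and follows essentially the same route as the paper's proof sketch: establish the formulas for $\psi_0,\psi_\infty$ via the comparison lemma $f_j^*(\psi_\infty)=\psi_\infty+D_{\{j\}^c}$ (iterated down to a one-point Losev--Manin space where $\psi_\infty=0$), and then pull back along $r_F$ using the boundary-divisor formula $r_F^*(D_G)=\sum_{G\subseteq G'\subseteq G\cup F^c}D_{G'}$. Your anticipated bookkeeping obstacle is milder than you fear: once you fix $i\in F$, every proper $G'\subseteq[n]$ with $G'\supseteq F$ automatically contains $i$, so the condition ``$i\in G'$ and $G'\cap F\subsetneq F$'' rewrites directly as ``$i\in G'$'' minus ``$G'\supseteq F$'', with no leftover correction terms.
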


\begin{proof}[Proof sketch]
The formulas for $\psi_0$ and $\psi_\infty$ follow from repeated application of the comparison lemma:
\[
 f_i^*(\psi_0)=\psi_0+D_{\{i\}}\;\;\;\text{ and } f_i^*(\psi_\infty)=\psi_\infty+D_{\{i\}^c},
\]
and the fact that $\psi_0=\psi_\infty=0\in A^*(\LM_{\{i\}})$. See \cite[Theorem 5.8]{AG} or \cite[Lemma 10]{Cavalieri} for a discussion of the comparison lemma in the setting of Losev-Manin spaces. The formulas for $\psi_F^{\pm}$ then follow from their definition in terms of forgetful maps along with the observation that
\[
r_F^*(D_G)=\sum_{\emptyset\subsetneq G'\subsetneq [n]\atop G\subseteq G'\subseteq G\cup F^c}D_{G'}.\qedhere
\]
\end{proof}

We now illustrate the utility of psi classes by showing how the results reviewed above lead to new proofs of two previously-known formulas for volumes of generalized permutahedra.

\subsection{Psi classes and Eur's volume formula}

Eur recently proved the following formula for volumes of generalized permutahedra.

\begin{theorem}[\cite{Eur} Proposition 4.2]\label{thm:eur}
If $\{x_F\in\Z\;|\;\emptyset\subsetneq F\subsetneq [n]\}$ is submodular, then
\[
\Vol(\Pi_n(x))=\frac{1}{(n-1)!}\sum_{F_1,\dots,F_{k}\atop d_1,\dots,d_k}(-1)^{n-k-1}{n-1\choose d_1,\dots,d_k}\prod_{i=1}^k{d_i-1\choose\tilde d_i-|F_i|}{|F_{i+1}|-|F_i|-1\choose \tilde d_i-|F_i|}x_{F_i}^{d_i}
\]
where the sum is over flags of subsets $\emptyset\subsetneq F_1\subsetneq\dots\subsetneq F_k\subsetneq F_{k+1}=[n]$ and positive integers $d_1,\dots,d_k$ such that $d_1+\dots+d_k=n-1$, and the numbers $\tilde d_j$ are defined by
\[
\tilde d_j=\sum_{i=1}^j d_i.
\] 
\end{theorem}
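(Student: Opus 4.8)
\section*{Proof proposal for Theorem~\ref{thm:eur}}

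The plan is to feed the multinomial expansion of $D(x)^{n-1}$ through the three computational lemmas of this section and then match the result term by term with the claimed formula. Writing $D(x)=\sum_{F}x_F D_F$, I would first expand by the multinomial theorem and collect terms by which divisors appear and with what multiplicity, obtaining
\[
D(x)^{n-1}=\sum\binom{n-1}{d_1,\dots,d_k}\,x_{F_1}^{d_1}\cdots x_{F_k}^{d_k}\,D_{F_1}^{d_1}\cdots D_{F_k}^{d_k},
\]
where the sum runs over finite collections of distinct proper subsets $F_1,\dots,F_k\subsetneq[n]$ and positive integers $d_1,\dots,d_k$ summing to $n-1$. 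By Corollary~\ref{cor:intersectdivisors} the summand vanishes unless $F_1,\dots,F_k$ can be arranged into a flag, and every collection that can be so arranged determines exactly one flag $\emptyset\subsetneq F_1\subsetneq\dots\subsetneq F_k\subsetneq[n]$ (order by inclusion); so the sum collapses to a sum over flags, and on each such term Corollary~\ref{cor:intersectdivisors} replaces $D_{F_1}^{d_1}\cdots D_{F_k}^{d_k}$ by $D_\F\prod_{i=1}^k(-\psi_{F_i}^--\psi_{F_i}^+)^{d_i-1}$.

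Next I would expand each binomial as $(-\psi_{F_i}^--\psi_{F_i}^+)^{d_i-1}=(-1)^{d_i-1}\sum_{a_i^-+a_i^+=d_i-1}\binom{d_i-1}{a_i^-}(\psi_{F_i}^-)^{a_i^-}(\psi_{F_i}^+)^{a_i^+}$, pull out the overall sign $(-1)^{\sum_i(d_i-1)}=(-1)^{n-k-1}$, and apply $\deg_{\LM_n}$ term by term. Setting $F_0=\emptyset$, $F_{k+1}=[n]$, and $a_0^+=a_{k+1}^-=0$ (there is no $\psi_0$ or $\psi_\infty$ factor in these monomials), Lemma~\ref{lem:product} gives
\[
\deg_{\LM_n}\!\Big(D_\F\prod_{i=1}^k(\psi_{F_i}^-)^{a_i^-}(\psi_{F_i}^+)^{a_i^+}\Big)=\prod_{i=0}^k\deg_{\LM_{F_{i+1}\setminus F_i}}\!\big(\psi_0^{a_i^+}\psi_\infty^{a_{i+1}^-}\big),
\]
and Lemma~\ref{lem:degreeofpsi} evaluates each factor as $\binom{|F_{i+1}|-|F_i|-1}{a_i^+,\,a_{i+1}^-}$. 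The key point is that the nonvanishing conditions of these binomials, namely $a_i^++a_{i+1}^-=|F_{i+1}|-|F_i|-1$ for $i=0,\dots,k$, together with the relations $a_i^-+a_i^+=d_i-1$ from the $\psi$-expansion, form a triangular system that is uniquely solved by $a_i^+=\tilde d_i-|F_i|$ and $a_i^-=|F_i|-1-\tilde d_{i-1}$; consistency at the top end is exactly the condition $\tilde d_k=n-1$. Hence the inner sum over the $a_i^\pm$ collapses to one term, and when the forced values fall outside $\{0,\dots,d_i-1\}$ the corresponding binomial coefficients vanish, so the identity persists.

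Finally I would substitute back: the surviving term contributes $\binom{d_i-1}{a_i^-}=\binom{d_i-1}{\tilde d_i-|F_i|}$ from the $\psi$-expansion and $\binom{|F_{i+1}|-|F_i|-1}{\tilde d_i-|F_i|}$ from Lemma~\ref{lem:degreeofpsi} (the $i=0$ factor being $1$), so that
\[
\deg_{\LM_n}\!\big(D_{F_1}^{d_1}\cdots D_{F_k}^{d_k}\big)=(-1)^{n-k-1}\prod_{i=1}^k\binom{d_i-1}{\tilde d_i-|F_i|}\binom{|F_{i+1}|-|F_i|-1}{\tilde d_i-|F_i|},
\]
and inserting this into the flag-indexed expansion and dividing by $(n-1)!$ as in \eqref{eq:volumepolynomialdivisors} reproduces the stated formula verbatim. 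I expect the only real obstacle to be the index bookkeeping: correctly treating the boundary cases $i=0$ and $i=k+1$ (where $\psi_{F_0}^+=\psi_0$ and $\psi_{F_{k+1}}^-=\psi_\infty$ are absent, forcing $a_0^+=a_{k+1}^-=0$), and verifying that the over-determined exponent system is consistent and uniquely solvable, so that the \emph{a priori} double sum over the $d_i$ and the $a_i^\pm$ collapses to the single sum over flags appearing in the theorem.
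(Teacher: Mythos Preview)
Your proposal is correct and follows essentially the same route as the paper: expand $D(x)^{n-1}$ multinomially, restrict to flags via Corollary~\ref{cor:intersectdivisors}, expand the psi-binomials with the overall sign $(-1)^{n-k-1}$, apply Lemmas~\ref{lem:product} and~\ref{lem:degreeofpsi}, and solve the resulting linear system in the $a_i^\pm$ to find the unique surviving term $a_i^+=\tilde d_i-|F_i|$. The only differences are cosmetic---you are slightly more explicit about the boundary conventions $a_0^+=a_{k+1}^-=0$ and about why the over-determined system is consistent---but the argument is the paper's argument.
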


In fact, Eur generalized and proved this formula in a more general matroid setting, which we will discuss in the next section. For now, let us give a short proof of Theorem~\ref{thm:eur} using psi classes.

\begin{proof} Applying \eqref{eq:volumepolynomialdivisors}, we have
\begin{align*}
\Vol(\Pi_n(x))&=\frac{1}{(n-1)!}\deg\left(\bigg(\displaystyle\sum_{\emptyset\subsetneq F\subsetneq [n]}x_FD_F\bigg)^{n-1}\right)\\
&=\frac{1}{(n-1)!}\sum_{F_1,\dots,F_k \atop d_1,\dots,d_k} {n-1 \choose d_1,\dots,d_k}\deg(D_{F_1}^{d_1}\dots D_{F_k}^{d_k})x_{F_1}^{d_1}\dots x_{F_k}^{d_k},
\end{align*}
where the sum is over $k$-tuples of distinct proper subsets $\emptyset\subsetneq F_1,\dots,F_k\subsetneq [n]$ and positive integers $d_1,\dots,d_k$ that sum to $n-1$. Since $D_{F_1}\cdots D_{F_k}=0$ when the indexing sets cannot be rearranged into a flag, we can restrict the sum to be over all flags of subsets of the form $\F=(\emptyset\subsetneq F_1\subsetneq\dots\subsetneq F_k\subsetneq F_{k+1}=[n])$. For such a flag, we may apply Corollary~\ref{cor:intersectdivisors} to obtain
\begin{align*}
\deg(D_{F_1}^{d_1}\dots D_{F_k}^{d_k})&=\deg\Big(D_\F\prod_{i=1}^k(-\psi_{F_i}^--\psi_{F_i}^+)^{d_i-1}\Big)\\
&=\deg\Bigg(D_\F(-1)^{n-k-1}\sum_{a_i^-,a_i^+}\prod_{i=1}^k{d_i-1 \choose a_i^-,a_i^+}(\psi_{F_i}^-)^{a_i^-}(\psi_{F_i}^+)^{a_i^+} \Bigg)\\
&=(-1)^{n-k-1}\sum_{a_i^-,a_i^+}\prod_{i=1}^k{d_i-1 \choose a_i^-,a_i^+}\deg\Big(D_\F\prod_{i=1}^k(\psi_{F_i}^-)^{a_i^-}(\psi_{F_i}^+)^{a_i^+} \Big).
\end{align*}
If we now use Lemmas~\ref{lem:product} and \ref{lem:degreeofpsi} to compute the degree, we obtain
\[
\deg(D_{F_1}^{d_1}\dots D_{F_k}^{d_k})=(-1)^{n-k-1}\sum_{a_i^-,a_i^+}\prod_{i=1}^k{d_i-1 \choose a_i^-,a_i^+}\prod_{i=0}^{k}{|F_{i+1}|-|F_{i}|-1. \choose a_{i}^+,a_{i+1}^-},
\]
where $a_0^+=a_{k+1}^-=0$. In order for the two sets of binomials to be nonzero, there are two systems of equations that $a_i^-$ and $a_i^+$ must satisfy:
\[
a_i^-+a_i^+=d_i-1\;\;\;\text{ for all }i=1,\dots,k
\]
and
\[
a_{i}^++a_{i+1}^-=|F_{i+1}|-|F_{i}|-1\;\;\;\text{ for all }i=0,\dots,k.
\]
Along with the conditions $a_0^+=a_{k+1}^-=0$, there is a unique solution given by
\[
a_i^+=\tilde d_i-|F_{i}|\;\;\;\text{ for all }i=1,\dots,k.
\]
It follows that
\[
\deg(D_{F_1}^{d_1}\dots D_{F_k}^{d_k})=(-1)^{n-k-1}\prod_{i=1}^k{d_i-1 \choose \tilde d_i -|F_i|}\prod_{i=0}^{k}{|F_{i+1}|-|F_{i}|-1. \choose \tilde d_{i}-|F_{i}|}
\]
Eur's formula then follows by noticing that the $i=0$ term in the second product is one. 
\end{proof}

\subsection{Psi classes and Postnikov's volume formula}

A different formula for the volumes of generalized permutahedra had previous been proved by Postnikov \cite{Postnikov}. In order to set up Postnikov's formula, we require a little more notation.

For any nonempty subset $F\subseteq [n]$, define a corresponding simplex
\[
\Delta_F=\Conv\{e_i:i\in F\}\subseteq\R^n.
\]
If $y_F$ is a nonnegative real number for every nonempty subset $F\subseteq [n]$, then Postnikov observed (\cite[Proposition 6.2]{Postnikov}) that the polytope
\[
\Pi^\Delta_n(y)=\sum_{\emptyset\subsetneq F\subseteq [n]}y_F\Delta_F,
\]
where the sum denotes Minkowski summation, consists of all points $(t_1,\dots,t_n)\in\R^n$ such that:
\[
t_1+\dots+t_n=z_{[n]}\;\;\;\text{ and }\;\;\;\sum_{i\in F}t_i\geq z_F\;\;\;\text{ for all }\;\;\;\emptyset\subsetneq F\subsetneq [n].
\]
where $z_F$ and $y_F$ are related by the invertible linear transformation
\[
z_F=\sum_{G\subseteq F} y_G.
\]
Under the transformation $(t_1,\dots,t_n)\mapsto(z_{[n]}-t_1,-t_2,\dots,-t_n)$, notice that $\Pi_n^\Delta(y)$ is identified with $\Pi_n(x)$ (introduced in Equation \eqref{eq:permhyperplanes}), where for any proper subset $\emptyset\subsetneq F\subsetneq [n]$, the variables $x_F$ and $y_F$ are related by
\[
x_F=\begin{cases}
-z_F=-\displaystyle\sum_{G\subseteq F} y_G & \text{if }1\notin F,\\
z_{[n]}-z_F=\displaystyle\sum_{G\subseteq[n]}y_G-\displaystyle\sum_{G\subseteq F} y_G&\text{if }1\in F.
\end{cases}
\]
In addition, it can be checked that, when $y_F\geq 0$ for all nonempty subsets $F$, the corresponding numbers $x_F$ are submodular, in the sense of \eqref{eq:submodular}, meaning that the intersection-theoretic formula \eqref{eq:volumepolynomialdivisors} is valid.

Postnikov proved the following formula for the volume of $\Pi_n^\Delta(y)$, which, by polynomiality of volumes, determines the volume for all generalized permutahedra (this last statement is carefully worked out by Ardila, Benedetti, and Doker \cite{ABD}).

\begin{theorem}[\cite{Postnikov} Corollary 9.4]\label{thm:postnikov}
If $y_G\geq 0$ for all nonempty subsets $G\subseteq [n]$, then
\[
\Vol(\Pi_n^\Delta(y))=\frac{1}{(n-1)!}\sum_{G_1,\dots,G_{n-1}}y_{G_1}\cdots y_{G_{n-1}},
\]
where the sum is over collections of nonempty subsets $G_1,\dots,G_{n-1}\subseteq [n]$ such that, for any $0<i_1<\dots<i_k<n$, we have
\[
|G_{i_1}\cup\dots\cup G_{i_k}|>k.
\]
\end{theorem}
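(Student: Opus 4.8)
The plan is to relate the Minkowski-sum polytope $\Pi_n^\Delta(y)$ back to divisors on $\LM_n$ and then apply the psi-class machinery already assembled, in close analogy to the proof of Theorem~\ref{thm:eur}. The key observation is that each simplex $\Delta_F$ is (up to the identifications above) the base polytope of a \emph{nef} divisor on $\LM_n$, and in fact a very simple one: tracing through the dictionary $x_F \leftrightarrow y_F$, one checks that the divisor class whose polytope is $\Delta_F$ is, up to sign and translation by a multiple of the trivial class, exactly one of the psi classes from Lemma~\ref{lem:psilinearcombo} — specifically $\psi_{F}^{-}$ or $\psi_{F^c}^{+}$ depending on whether $1\in F$. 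Concretely, I expect that under the transformation $(t_1,\dots,t_n)\mapsto(z_{[n]}-t_1,-t_2,\dots,-t_n)$, the Minkowski sum $\Pi_n^\Delta(y)$ corresponds to the nef divisor $D(x)=\sum_{\emptyset\subsetneq G\subseteq[n]} y_G\, h_G$ where $h_G$ denotes the psi-type divisor associated to $G$ via Lemma~\ref{lem:psilinearcombo}; the case distinction in the formula for $x_F$ is precisely the case distinction $1\in F$ vs.\ $1\notin F$ that appears there.

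The next step is to compute, using \eqref{eq:volumepolynomialdivisors},
\[
\Vol(\Pi_n^\Delta(y))=\frac{1}{(n-1)!}\deg_{\LM_n}\!\Big(\big(\textstyle\sum_{\emptyset\subsetneq G\subseteq[n]} y_G\, h_G\big)^{n-1}\Big)=\frac{1}{(n-1)!}\sum_{G_1,\dots,G_{n-1}}\deg_{\LM_n}(h_{G_1}\cdots h_{G_{n-1}})\, y_{G_1}\cdots y_{G_{n-1}},
\]
where the sum is over ordered $(n-1)$-tuples of nonempty subsets. So the theorem reduces to the purely intersection-theoretic claim that $\deg_{\LM_n}(h_{G_1}\cdots h_{G_{n-1}})$ equals $1$ when the combinatorial condition ``$|G_{i_1}\cup\dots\cup G_{i_k}|>k$ for all $0<i_1<\dots<i_k<n$'' holds, and $0$ otherwise. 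This is a statement about degrees of products of psi classes of the form $\psi_F^-$ and $\psi_{F^c}^+$, which in turn — using Corollary~\ref{cor:intersectdivisors}, Lemma~\ref{lem:product}, and Lemma~\ref{lem:degreeofpsi} — can be evaluated entirely combinatorially. (This is, of course, exactly the Losev--Manin shadow of Result~F / Theorem~\ref{thm:BES} in the matroid setting, applied to the Boolean matroid on $[n]$, but here I would prove the $\LM_n$ version directly.)

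To prove that combinatorial claim I would induct on $n$, peeling off the bottom component of the chain. Write each $h_{G}$ via Lemma~\ref{lem:psilinearcombo} as a $\pm$-difference of sums of boundary divisors, expand, and use Corollary~\ref{cor:intersectdivisors} together with Lemma~\ref{lem:product} to split the degree as a product of degrees over the components $\LM_{F_{i+1}\setminus F_i}$; Lemma~\ref{lem:degreeofpsi} then forces, on each component, a unique distribution of exponents between $\psi_0$ and $\psi_\infty$, yielding a product of multinomial coefficients. The vanishing-or-$1$ dichotomy should fall out because the only surviving flag $\F$ in the expansion is the one in which the sets $G_1,\dots,G_{n-1}$ "nest optimally," and such a flag exists exactly when no $k$-subset of the $G_i$'s has union of size $\le k$ — a Hall-type / matroid-union condition. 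The main obstacle I anticipate is \emph{bookkeeping}: correctly identifying which psi class corresponds to $\Delta_F$ (getting the $1\in F$ case split right), and then carefully matching the resulting exponent-distribution conditions to the rank/union inequality in Postnikov's statement; the geometry and the tools are all in hand, but the combinatorial translation requires care. An alternative, perhaps cleaner, route would be to invoke the matroid result Theorem~\ref{thm:BES} directly for the Boolean matroid $U_{n,n}$, for which $\rk=n$, proper flats are proper subsets, and $\sM[F_i,F_{i+1}]$ is again Boolean with $\mu^0=1$ and higher $\mu^a=0$ — but since Theorem~\ref{thm:BES} is proved later in the paper, in this section I would keep the argument self-contained within the $\LM_n$ framework.
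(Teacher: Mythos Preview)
Your reduction step matches the paper exactly: the change of variables computation collapses to $\sum_F x_F D_F = \sum_G y_G\,\psi_G^-$ \emph{uniformly}, with no case split on $1\in G$ (the $1\in F$ versus $1\notin F$ dichotomy in the $x_F$ formula is absorbed into the formula for $\psi_G^-$ from Lemma~\ref{lem:psilinearcombo} with $i=1$). So $h_G=\psi_G^-$ always, and the theorem reduces, as you say, to the $0$/$1$ statement for $\deg_{\LM_n}(\psi_{G_1}^-\cdots\psi_{G_{n-1}}^-)$.

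Where you diverge from the paper is in proving that $0$/$1$ statement. You propose induction on $n$ and an expansion via Corollary~\ref{cor:intersectdivisors}, Lemma~\ref{lem:product}, and Lemma~\ref{lem:degreeofpsi}, hunting for a unique ``optimally nesting'' flag. The paper's argument is shorter and avoids induction entirely. For the vanishing case, it uses the composition property \eqref{eq:composerememberpsi}: if $G=G_{i_1}\cup\cdots\cup G_{i_k}$ has $|G|\le k$, then $\psi_{G_{i_1}}^-\cdots\psi_{G_{i_k}}^-=r_G^*(\text{something in }A^k(\LM_G))=0$ because $\dim\LM_G<k$. For the equals-$1$ case, the paper expands $\prod_j(\psi_\infty-\sum_{F\supseteq G_j}D_F)$ and shows that \emph{every} term with $k>0$ boundary factors vanishes: the largest flat $F$ appearing has $|F|>k$ by the Hall condition, so the $\psi_\infty^{n-1-k}$ on the top component has too high a power by Lemma~\ref{lem:product}. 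Only the $k=0$ term $\deg(\psi_\infty^{n-1})=1$ survives --- there is no ``unique surviving flag,'' just the empty flag. Your route would likely work, but the paper's is cleaner and sidesteps the bookkeeping you anticipated as the main obstacle.
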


\begin{proof}
Let us prove this formula using psi classes. By Equation \eqref{eq:volumepolynomialdivisors}, we have
\[
\Vol(\Pi_n^\Delta(y))=\frac{1}{(n-1)!}\deg\left(\bigg(\displaystyle\sum_{\emptyset\subsetneq F\subsetneq [n]}x_FD_F\bigg)^{n-1}\right).
\]
Applying the change of variables above, notice that
\begin{align*}
\displaystyle\sum_{\emptyset\subsetneq F\subsetneq [n]}x_FD_F&=-\displaystyle\sum_{\emptyset\subsetneq F\subsetneq [n] \atop 1\notin F}\Big(\displaystyle\sum_{G\subseteq F} y_G \Big)D_F+\displaystyle\sum_{\emptyset\subsetneq F\subsetneq [n]\atop 1\in F}\Big(\displaystyle\sum_{G\subseteq[n]}y_G-\displaystyle\sum_{G\subseteq F} y_G\Big)D_F\\
&=\sum_{\emptyset\subsetneq G\subseteq [n]}y_G\Big(-\sum_{F\supseteq G\atop 1\notin F}D_F+\sum_{F\subseteq [n]\atop 1\in F}D_F-\sum_{F\supseteq G\atop 1\in F}D_F\Big)\\
&=\sum_{\emptyset\subsetneq G\subseteq [n]}y_G\Big(\sum_{F\subseteq [n]\atop 1\in F}D_F-\sum_{F\supseteq G}D_F\Big)\\
&=\sum_{\emptyset\subsetneq G\subseteq [n]}y_G\psi_G^-,
\end{align*}
where the last equality follows from Lemma~\ref{lem:psilinearcombo}. Thus, Postnikov's formula can be reinterpreted as an intersection-theoretic property of psi classes. In particular, Postnikov's formula is equivalent to the statement that
\begin{equation}\label{eq:postnikovpsi}
\deg(\psi_{G_1}^-\dots\psi_{G_{n-1}}^-)=
\begin{cases}
1 &\text{if } 0<i_1<\dots<i_k<n\Longrightarrow |G_{i_1}\cup\dots\cup G_{i_k}|>k,\\
0 &\text{else.}
\end{cases}
\end{equation}

To prove \eqref{eq:postnikovpsi}, we start by proving the second case. Suppose that there exists some $0<i_1<\dots<i_k<n$ such that
\[
G=G_{i_1}\cup\dots\cup G_{i_k}
\]
has at most $k$ elements. By virtue of Equation~\eqref{eq:composerememberpsi}, notice that
\begin{align*}
\psi_{G_{i_1}}^-\dots \psi_{G_{i_k}}^-&=r_{G_{i_1}}^*(\psi_\infty)\cdots r_{G_{i_k}}^*(\psi_\infty)\\
&=r_G^*(r_{G_{i_1}}^*(\psi_\infty)\cdots r_{G_{i_k}}^*(\psi_\infty)).
\end{align*}
Notice that the argument of $r_G^*$ in the final expression is an element of $A^k(\LM_G)$, which is zero because $\dim(\LM_G)=|G|-1$, which we have assume to be strictly less than $k$.

Next, to prove the first case of \eqref{eq:postnikovpsi}, suppose that $0<i_1<\dots<i_k<n$ implies $|G_{i_1}\cup\dots\cup G_{i_k}|>k$. Applying Lemma~\ref{lem:psilinearcombo}, notice that
\begin{align*}
\psi_{G_1}^-\cdots\psi_{G_{n-1}}^-&=\Big(\psi_\infty-\sum_{F\supseteq G_{i_1}}D_F\Big)\cdots\Big(\psi_\infty-\sum_{F\supseteq G_{i_{n-1}}}D_F\Big)\\
&=\sum_{k=0}^{n-1}\psi_\infty^{n-1-k}(-1)^k\sum_{0<i_1<\dots<i_k<n \atop F_j\supseteq G_{i_j}}D_{F_1}\cdots D_{F_k}.
\end{align*}
We claim that the only nonzero term in the sum is the one indexed by $k=0$. To verify this, notice that multiplying $D_{F_1}\cdots D_{F_k}$ will either be zero or a multiple of $D_\F$ for some flag $\F$. In the latter case, notice that the largest set in the flag $\F$ must be $F=F_1\cup\cdots\cup F_k$, which contains $G_{i_1}\cup\dots\cup G_{i_k}$. This implies that $F$ has more than $k$ elements, showing that $n-|F|-1<n-k-1$. It then follows that
\[
\deg(\psi_\infty^{n-k-1}D_{F_1}\dots D_{F_k})=0
\]
because, using Lemmas~\ref{lem:product}, it contains a factor of 
\[
\deg_{\LM_{[n]\setminus F}}\big(\psi_0^{a}\psi_\infty^{n-k-1}\big)=0.
\]
Thus, the only nonzero term in the sum is the one indexed by $k=0$, in which case we compute
\[
\deg(\psi_\infty^{n-1})=1.\qedhere
\]
\end{proof}

\section{Matroid psi classes}\label{sec:matroidpsi}

We now describe a generalization of psi classes from Losev-Manin spaces to the matroid setting. We then use matroid psi classes to give new proofs of formulas for volume polynomials of matroids, and we use them to give a constructive proof of Poincar\'e duality.

\subsection{Matroid basics}\label{sec:matroidbackground}

Before discussing matroid psi classes, we begin by introducing the relevant matroid background and terminology.

\subsubsection{Definitions}

A \emph{matroid} $\sM=(E,\cL)$ consists of a finite set $E$, called the \emph{ground set}, and a collection of subsets $\cL=\cL_\sM\subseteq 2^E$, called \emph{flats}, which satisfy the following two conditions:
\begin{enumerate}
\item if $F_1,F_2$ are flats, then $F_1\cap F_2$ is a flat, and
\item if $F$ is a flat, then every element of $E\setminus F$ is contained in exactly one flat that is minimal among the flats that strictly contain $F$.
\end{enumerate}

Given a matroid $\sM=(E,\cL)$, the set $\cL$ is partially ordered by set inclusion. Furthermore, given any subset $S\subseteq E$, it follows from Property (1) that there is a minimal flat containing $S$, called the \emph{closure} of $S$ and denoted $\cl(S)\in\cL$. Defining the \emph{join} ($\vee$) of two flats to be the closure of their union and the \emph{meet} ($\wedge$) of two flats to be their intersection, it follows from the definitions that the flats $\cL$ form a lattice, called the \emph{lattice of flats} of $\sM$. 

A subset $I\subseteq E$ is called \emph{independent} if, for any $I_1\subsetneq I_2\subseteq I$, we have $\cl(I_1)\subsetneq\cl(I_2)$. The \emph{rank} of a subset $S\subseteq E$, denoted $\rk(S)$, is the size of its largest independent subset. The rank of $\sM$ is defined as the rank of $E$. An alternative characterization of the rank of flats is given by lengths of flags. In particular, the number of nonempty flats in a flag
\[
\F=(\emptyset\subsetneq F_1\subsetneq F_2\subsetneq\cdots\subsetneq F_\ell)
\]
is called the \emph{length} of the flag, denote $\ell(\F)$, and it can be checked from the above definitions that every maximal flag of flats contained in a flat $F$ has length equal to $\rk(F)$.

There are several important types of elements in a matroid $\sM=(E,\cL)$. A \emph{loop} of $\sM$ is an element $e\in E$ such that $\rk(\{e\})=0$, and a \emph{coloop} of $\sM$ is an element $e\in E$ such that $\{e\}^c\in\cL$. Two elements $e,f\in E$ are said to be \emph{parallel} if $\rk(\{e\})=\rk(\{f\})=\rk(\{e,f\})$. A matroid without loops is called \emph{loopless} and a matroid without loops or parallel elements is called \emph{simple}. In other words, a loopless matroid is one for which the empty set is a flat, and a simple matroid is one for which, in addition, each rank-one flat is a singleton.

\subsubsection{Matroid constructions}

Given a matroid $\sM=(E,\cL)$ and a subset $S\subseteq E$, there are several important ways to construct related matroids. The \emph{restriction} of $\sM$ to $S$, denoted $\sM|_S$, is the matroid on ground set $S$ with flats
\[
\cL_{\sM|_S}=\{F\cap S\;|\; F\in\cL_\sM\}.
\]
The \emph{contraction} of $\sM$ by $S$, denoted $\sM/S$ is the matroid on ground set $E\setminus S$ with flats
\[
\cL_{\sM/S}=\{F\setminus S\;|\;F\in\cL_\sM\text{ and }S\subseteq F\}.
\]
Lastly, the \emph{deletion} of $\sM$ by $S$, denoted $\sM\setminus S$, is the restriction of $\sM$ to $E\setminus S$:
\[
\sM\setminus S=\sM|_{E\setminus S}.
\]

If $F,G\in\cL_\sM$, then we introduce the notation $\sM[F,G]=(\sM|_G)/F$. By definition, $\sM[F,G]$ is the matroid of rank $\rk(G)-\rk(F)$ on the ground set $G\setminus F$ with flats
\[
\cL_{\sM[F,G]}=\{H\setminus F\;|\;H\in\cL_\sM,\;F\subseteq H\subseteq G\}.
\] 
Notice that the flats of $\sM[F,G]$ are in natural inclusion-preserving bijection with the flats of $\sM$ that are weakly contained between $F$ and $G$, which comprise the closed interval $[F,G]$. We use the shorthand $\cL[F,G]$ for the flats of $\sM[F,G]$ and we denote the proper flats by $\cL(F,G)$.

Given a matroid $\sM=(E,\cL)$, the \emph{simplificiation} of $\sM$, denoted $\underline\sM$, is the matroid obtained by choosing a distinguished element from each rank-one flat and deleting all other elements of $E$. The simplification is unique up to relabeling the elements of the ground set, so we do not stress the choice of distinguished elements. Notice that the lattice of flats of $\sM$ and $\underline\sM$ are naturally isomorphic.

\subsubsection{Characteristic polynomials} 

Given a matroid $\sM=(E,\cL)$, the \emph{characteristic polynomial} of $\sM$ is defined by
\[
\chi_\sM(\lambda)=\sum_{S\subseteq E}(-1)^{|S|}\lambda^{\rk(E)-\rk(S)}.
\]
From this definition, it is an excellent exercise to check the following three properties.
\begin{enumerate}
\item[($\chi$1)] If $\sM$ has a loop, then $\chi_\sM(\lambda)=0$.
\item[($\chi$2)] If $e$ is a coloop of $\sM$, then $\chi_\sM(\lambda)=(\lambda-1)\chi_{\sM\setminus\{e\}}(\lambda)$.
\item[($\chi$3)] If $e$ is neither a loop nor a coloop, then
\[
\chi_\sM(\lambda)=\chi_{\sM\setminus\{e\}}(\lambda)-\chi_{\sM/\{e\}}(\lambda)
\]
\end{enumerate}
Property ($\chi$3) is called the \emph{deletion-contraction} property, and it generalizes the property of the same name for chromatic polynomials of graphs. Notice that Properties ($\chi$1)--($\chi$3) determine $\chi_\sM(\lambda)$ recursively on the size of the ground set. In addition, it follows from ($\chi$1) and ($\chi$3) that $\chi_\sM(\lambda)=\chi_{\underline\sM}(\lambda)$ for any loopless matroid $\sM$.

It also follows from Properties ($\chi$1)--($\chi$3) that, for any nonempty matroid $\sM$, the characteristic polynomial $\chi_\sM(\lambda)$ is divisible by $\lambda-1$. The \emph{reduced characteristic polynomial} of a nonempty matroid $\sM$ is define by
\[
\ochi_\sM(\lambda)=\frac{\chi_\sM(\lambda)}{\lambda-1}.
\]
Naturally, the reduced characteristic polynomial also satisfies Properties ($\chi$1)--($\chi$3).

\subsubsection{Chow rings}

Let $\sM=(E,\cL)$ be a loopless matroid and denote the collection of proper flats of $\sM$ by $\cL^*=\cL\setminus\{\emptyset,E\}$. The \emph{matroid Chow ring} is defined by
\[
A^*(\sM)=\frac{\Z\big[X_F\;|\;F\in\cL^*\big]}{\I+\J},
\]
where
\[
\I=\big\langle X_{F_1}X_{F_2}\;|\;F_1\text{ and }F_2\text{ are incomparable} \big\rangle
\]
and
\[
\J=\bigg\langle\sum_{e\in F }X_F-\sum_{f\in F}X_F\;\Big|\;e,f\in E \bigg\rangle.
\]
We denote the generators of the matroid Chow ring by $D_F=[X_F]\in A^1(M)$. Notice that the Chow ring only depends on the lattice of flats, which implies that $A^*(\sM)=A^*(\underline\sM)$.

Matroid Chow rings were first defined by Feichtner and Yuzvinsky \cite{FY} (in the more general setting of atomic lattices). The presentation given by Feichtner and Yuzvinsky slightly differs from the one give above in that it includes an additional generator $D_E\in A^1(\sM)$ and an additional relation
\[
D_E=-\sum_{e\in F\atop F\in\cL^*}D_F,
\]
where $e$ is any element of $E$. An important result of Feichtner and Yuzvinsky is the derivation of an integral basis for $A^*(\sM)$, which we recall here.

\begin{theorem}{\cite[Corollary~1]{FY}}\label{thm:basis}
If $\sM$ is a loopless matroid, then a $\Z$-basis of $A^*(\sM)$ is given by all monomials of the form
\[
D_{F_1}^{d_1}\cdots D_{F_\ell}^{d_\ell}
\]
with $\emptyset=F_0\subsetneq F_1\subsetneq \cdots\subsetneq F_k\subseteq E$ and $d_i<\rk(F_i)-\rk(F_{i-1})$ for all $i=1,\dots,\ell$.
\end{theorem}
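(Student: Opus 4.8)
The statement to prove is \cite[Corollary~1]{FY} (reading the index $k$ in the displayed chain as $\ell$, and noting that since $A^*(\sM)=A^*(\underline{\sM})$ we may assume $\sM$ is simple). The plan is the usual two-step argument for presentations of this type: show that the listed monomials \emph{span} $A^*(\sM)$ over $\Z$, and then that they are \emph{linearly independent}. Conceptually both steps are subsumed by the stronger assertion that one can exhibit a Gröbner basis of $\I+\J$, for a suitable monomial order on $\Z[X_F\mid F\in\cL^*]$, whose standard monomials — those divisible by no leading term — are exactly the listed monomials; I would organize the write-up around that assertion but argue the two halves somewhat separately.

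\emph{Spanning.} This is a straightening procedure. If two flats in the support of a monomial in the $D_F$ are incomparable, then $\I$ kills it, so its support may be taken to be a flag $\emptyset=F_0\subsetneq F_1\subsetneq\cdots\subsetneq F_m$ with exponents $d_1,\dots,d_m$. If some $d_j\ge\rk(F_j)-\rk(F_{j-1})$, one uses the linear relations in $\J$ to lower it: multiplying the relation $\sum_{G\ni e}D_G=\sum_{G\ni f}D_G$, for $e\in F_j\setminus F_{j-1}$ and $f\in E\setminus F_j$, by the remaining factors $D_{F_1}\cdots D_{F_{j-1}}D_{F_{j+1}}\cdots D_{F_m}\,D_{F_j}^{\,d_j-1}$ and discarding every term whose support becomes incomparable, one obtains an identity that rewrites the original monomial as a $\Z$-combination of monomials that are strictly simpler for the chosen order (the surplus exponent of $D_{F_j}$ has been traded for factors supported on flats strictly between $F_{j-1}$ and $F_j$ or strictly above $F_j$). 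The listed monomials are precisely those to which no such reduction applies, so a well-founded induction shows they span $A^*(\sM)$ over $\Z$.

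\emph{Linear independence.} It suffices to check that, in each degree $k$, the number $N_k(\sM)$ of listed monomials equals $\dim_{\mathbb{Q}}\big(A^k(\sM)\otimes\mathbb{Q}\big)$: together with the $\Z$-spanning already proved, this forces the listed monomials to be a $\Z$-basis. I would establish the equality by induction on $|E|$. Picking $e\in E$ that is not a coloop, one needs a deletion–contraction-type decomposition of the matroid Chow ring, degree by degree, into $A^*(\sM\setminus e)$ together with shifted copies of Chow rings of certain proper minors of $\sM$ (attached to the flats $F$ with $e\notin F$), which yields a recursion for $\dim_{\mathbb{Q}}\big(A^k(\sM)\otimes\mathbb{Q}\big)$; sorting the listed monomials according to whether their support flag contains a flat avoiding $e$ gives the matching recursion for $N_k(\sM)$, and the case $|E|=1$ is immediate. (If one commits instead to the Gröbner-basis route, independence is automatic once Buchberger's criterion is verified, since the standard monomials of any Gröbner basis form a basis of the quotient.)

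\emph{Main obstacle.} The straightening step is essentially bookkeeping; the real content is the independence/dimension count. In the Gröbner formulation the obstacle is verifying Buchberger's criterion — checking that every S-polynomial among the chosen generators of $\I+\J$ reduces to $0$, and then reading off the initial ideal — and doing so over $\Z$ rather than over a field, where one must additionally confirm that the reductions never introduce denominators; the explicit shape of the straightening relations above is what guarantees this. In the inductive formulation the obstacle is proving the deletion–contraction decomposition of $A^*(\sM)$ and matching it cleanly, degree by degree, with the combinatorial recursion for $N_k(\sM)$.
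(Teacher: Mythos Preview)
The paper does not prove this statement at all: Theorem~\ref{thm:basis} is quoted from Feichtner--Yuzvinsky \cite[Corollary~1]{FY} and used as a black box, so there is no ``paper's own proof'' to compare your proposal against. Everything downstream (the definition of the degree map, Poincar\'e duality, etc.) takes the Feichtner--Yuzvinsky basis as given.

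For what it is worth, your sketch is broadly in the spirit of the original argument in \cite{FY}, which does proceed by exhibiting a Gr\"obner basis for $\I+\J$ (over $\Z$) and identifying the standard monomials. Your spanning/straightening step is essentially correct, though you should be explicit about the monomial order that makes the rewriting well-founded. The independence half is where your proposal is thinnest: you gesture at a deletion--contraction decomposition of $A^*(\sM)$ matching a recursion for the monomial count, but you neither state the decomposition precisely nor prove it, and this is exactly the nontrivial content. If you pursue the Gr\"obner route instead, you must actually verify Buchberger's criterion for your chosen generators and order, and check that all reductions stay integral; merely naming Buchberger is not a proof. Either way, the ``main obstacle'' you identify is real and is not discharged by what you have written.
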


Suppose that $\rk(\sM)=r+1$. It follows from Theorem~\ref{thm:basis} that $A^k(\sM)=0$ for any $k>r$, and that $A^r(\sM)$ is one-dimensional, generated by $D_E^r$. In particular, we can define a linear isomorphism
\[
\deg: A^r(\sM)\rightarrow\Z
\]
by setting $\deg\big((-D_E)^r\big)=1$. The class $-D_E$ played a central role in the work of Adiprasito, Huh, and Katz, where it was denoted as $\alpha$. In particular, Proposition~5.8 of \cite{AHK} implies that
\[
D_{F_1}\cdots D_{F_r}=(-D_E)^r
\]
for any complete flag $\emptyset\subsetneq F_1\subsetneq\cdots\subsetneq F_r\subsetneq E$. In other words, given any class $\gamma\in A^r(\sM)$, we can compute $\deg(\gamma)$ as follows.
\begin{enumerate}
\item Use the relations in $\I$ and $\J$ to find a linear combination
\[
\gamma=\sum_{\F}a_\F(\gamma)D_\F
\]
where the sum is over complete flags $\F=(\emptyset \subsetneq F_1\subsetneq \dots\subsetneq F_r\subsetneq E)$, the coefficients $a_\F(\gamma)$ are integers, and $D_\F=D_{F_1}\cdots D_{F_r}$.
\item Compute
\[
\deg(\gamma)=\sum_\F a_\F(\gamma).
\]
\end{enumerate}
The aforementioned result of Adiprasito, Huh, and Katz implies that the sum of the coefficients in (2) is independent of the choice of linear combination in (1).

Finally, in closing this section, we note that for the specific matroid $\sM=([n],2^E)$, the matroid Chow ring specializes to the Chow ring of Losev-Manin space $A^*(\LM_n)$ and the matroid degree map is identified with the algebro-geometric degree map. This observation motivates extending tools from $A^*(\LM_n)$ to Chow rings of arbitrary matroids.

\subsection{Matroid psi classes}\label{subsec:matroidpsi}

Throughout this subsection, we let $\sM=(E,\cL)$ denote a loopless matroid of rank $r+1$. We begin by using the characterization of Lemma~\ref{lem:psilinearcombo} to introduce a generalization of psi classes to the matroid setting.

\begin{definition}\label{def:matroidpsiclass}
For any $F\in\cL$ and $e\in E$, define classes $\psi_F^{\pm}\in A^1(\sM)$ by
\[
\psi_F^-=\sum_{G\in\cL^*\atop e\in G}D_G-\sum_{G\in\cL^*\atop G\supseteq F}D_G\;\;\;\text{ and }\;\;\;\psi_F^+=\sum_{G\in\cL^*\atop e\notin G}D_G-\sum_{G\in\cL^*\atop G\subseteq F}D_G.
\]
In the special case that $F=\emptyset$ or $F=E$, define
\[
\psi_0=\psi_\emptyset^+=\sum_{G\in\cL^*\atop e\notin G}D_G\;\;\;\text{ and }\;\;\;\psi_\infty=\psi_{E}^-=\sum_{G\in\cL^*\atop e\in G}D_G.
\]
\end{definition}

Notice that $\psi_\infty=-D_E$, which, as we mentioned above, was also denoted as $\alpha$ in \cite{AHK}, and we mention that $\psi_0$ also appeared in \cite{AHK}, where it was denoted $\beta$. We already commented above on why the class $\psi_\infty$ is independent of the choice of $e\in E$, and this also implies that $\psi_F^-$ is independent of this choice for any flat $F$. It is a short exercise to verify that $\psi_F^+$ is also independent of the choice of $e\in E$.

Equipped with a general definition of matroid psi classes, we now aim to generalize the basic results from the setting of Losev-Manin spaces. We start with the following generalization of Lemma~\ref{lem:selfintersection}.

\begin{proposition}\label{prop:selfintersection}
For any $F\in\cL^*$, we have
\[
D_F^2= D_F(-\psi_F^--\psi_F^+) \in A^2(\sM).
\]
\end{proposition}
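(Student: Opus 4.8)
<br>

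The plan is to prove the identity $D_F^2 = D_F(-\psi_F^- - \psi_F^+)$ purely algebraically inside the Chow ring $A^*(\sM)$, using only the definitions of $\psi_F^\pm$ from Definition~\ref{def:matroidpsiclass} together with the defining relations in $\I$ and $\J$. The key observation is that, by the incomparability relations in $\I$, multiplying $D_F$ against a sum of divisors $D_G$ kills every term with $G$ incomparable to $F$. So both $D_F\psi_F^-$ and $D_F\psi_F^+$ simplify dramatically.

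First I would compute $-D_F\psi_F^-$. By definition, $-\psi_F^- = \sum_{G \supseteq F} D_G - \sum_{e \in G} D_G$, where both sums run over proper flats. Multiplying by $D_F$: in $\sum_{G \supseteq F, G \in \cL^*} D_F D_G$, the $G = F$ term contributes $D_F^2$ and the remaining terms have $G \supsetneq F$, all comparable to $F$, so they survive. In $\sum_{e \in G} D_F D_G$, only flats $G$ comparable to $F$ survive; writing this as $\sum_{e\in G, G \subsetneq F} D_F D_G + \sum_{e \in G, G = F \text{ or } G \supsetneq F} D_F D_G$ (noting $e \in F$ may or may not hold — I would handle this by the $e$-independence of $\psi_F^-$, or just pick $e$ cleverly). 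Similarly, $-D_F\psi_F^+ = D_F\big(\sum_{G \subseteq F} D_G - \sum_{e \notin G} D_G\big)$: the first sum's $G=F$ term gives another $D_F^2$, the rest are $G \subsetneq F$; the second sum survives only for $G$ comparable to $F$. Adding $-D_F\psi_F^- - D_F\psi_F^+$, the terms indexed by flats strictly comparable to $F$ should cancel in pairs (a $G \supsetneq F$ or $G \subsetneq F$ term appearing with a $+$ from one of the "contains/contained in $F$" sums and a $-$ from one of the "$e \in G$ / $e \notin G$" sums, since every proper flat $G$ comparable to $F$ satisfies exactly one of $e \in G$, $e \notin G$), leaving exactly $D_F^2 + D_F^2 - (\text{one copy of }D_F^2) = D_F^2$ — I would need to track the $G = F$ terms carefully to see that exactly one extra $D_F^2$ gets subtracted off by a term from the $e$-sums. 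An alternative, cleaner route: apply the relation in $\J$ directly. Since $\sum_{e \in G} D_G - \sum_{f \in G} D_G \in \J$ for any $e, f \in E$, one can use this to rewrite $\psi_F^- + \psi_F^+$ and reduce to showing $D_F^2 = D_F \cdot (\text{a specific } \J\text{-relation difference})$, which is essentially the standard fact that $D_F$ times the linear relation "pick $e \in F$, $f \notin F$" equals $-D_F^2$ in matroid Chow rings.

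The cleanest approach is probably the second: choose $e \in F$ and $f \notin F$ (possible since $F$ is a nonempty proper flat of a loopless matroid). Then $\psi_F^- + \psi_F^+ = \big(\sum_{e \in G} D_G - \sum_{G \supseteq F} D_G\big) + \big(\sum_{f \notin G} D_G - \sum_{G \subseteq F} D_G\big)$. Using $\sum_{e \in G} D_G = \sum_{f \in G} D_G$ modulo $\J$ and $\sum_{f \notin G} D_G = \sum_{G \in \cL^*} D_G - \sum_{f \in G} D_G$, the $\psi_\infty$-type pieces cancel, leaving $\psi_F^- + \psi_F^+ = \sum_{G \in \cL^*} D_G - \sum_{G \supseteq F} D_G - \sum_{G \subseteq F} D_G$ modulo $\J$. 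Now multiply by $D_F$: every $G$ incomparable to $F$ dies; among comparable $G$, those with $F \subsetneq G$ or $G \subsetneq F$ appear once in $\sum_G D_G$ and once in one of the two subtracted sums, cancelling; the term $G = F$ appears once in $\sum_G D_G$ (giving $D_F^2$) and twice in the subtracted sums (giving $-2D_F^2$). Hence $D_F(\psi_F^- + \psi_F^+) = -D_F^2$, as desired.

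The main obstacle I anticipate is bookkeeping the $G = F$ term and the role of $e$: the sums in Definition~\ref{def:matroidpsiclass} range over $\cL^*$ so they include $G = F$, and one must be scrupulous about whether $F$ is counted in "$e \in G$" versus "$e \notin G$" — this is exactly why choosing $e \in F$ (for the $\psi^-$ piece) and a separate element $f \notin F$ (for the $\psi^+$ piece), legitimate because $\psi_F^\pm$ are independent of these choices, streamlines the argument. A secondary subtlety is justifying that $\sum_{e \in G, G \in \cL^*} D_G \equiv \sum_{f \in G, G \in \cL^*} D_G \pmod{\J}$; this is immediate from the definition of $\J$ once one notes the generators of $\J$ are exactly these differences. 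With those two points handled, the computation is a short and essentially mechanical cancellation, and I would present it as a single displayed chain of equalities modulo $\I + \J$.
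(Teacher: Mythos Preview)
Your second (``cleanest'') approach is correct and is essentially the paper's proof, just organized in reverse order: the paper rewrites $D_F = D_F + \sum_G D_G - \sum_{e\notin G} D_G - \sum_{e\in G} D_G$ and then multiplies by $D_F$, while you first simplify $\psi_F^- + \psi_F^+$ and then multiply by $D_F$, arriving at the same cancellation via $\I$. One small simplification: if you use the \emph{same} element $e\in E$ in the definitions of both $\psi_F^-$ and $\psi_F^+$ (as the paper does), the identity $\psi_F^- + \psi_F^+ = \sum_{G\in\cL^*} D_G - \sum_{G\supseteq F} D_G - \sum_{G\subseteq F} D_G$ holds on the nose with no appeal to $\J$, so introducing a second element $f$ is unnecessary.
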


\begin{proof}
Choose $e\in E$ and write
\begin{align}
\nonumber D_F&=D_F+\sum_{e\in G} D_G-\sum_{e\in G}D_G\\
&=D_F+\sum_{G} D_G-\sum_{e\notin G}D_G-\sum_{e\in G}D_G.\label{eq.termsterms}
\end{align}
When we multiply the first two terms of \eqref{eq.termsterms} by $D_F$ and use the fact that $D_FD_G=0$ when $F$ and $G$ are incomparable (by definition of $\I$), we have
\[
D_F\Big(D_F+\sum_G D_G\Big)=D_F\Big(\sum_{G\supseteq F}D_G+\sum_{G\subseteq F}D_G\Big).
\]
Including the final two terms of \eqref{eq.termsterms}, we conclude that
\[
D_F^2=D_F\bigg(-\Big(\sum_{e\in G}D_G-\sum_{G\supseteq F}D_G\Big)-\Big(\sum_{e\notin G}D_G-\sum_{G\subseteq F}D_G\Big)\bigg)=D_F(-\psi_F^--\psi_F^+).\qedhere
\]
\end{proof}

Repeatedly applying Proposition~\ref{prop:selfintersection} results in the following corollary. 

\begin{corollary}\label{cor:selfintersectionmatroid}
If $F_1,\dots,F_k\in\cL^*$ are distinct proper flats and $d_1,\dots,d_k$ are positive integers, then
\[
D_{F_1}^{d_1}\cdots D_{F_k}^{d_k}=\begin{cases}
D_\F\displaystyle\prod_{i=1}^k(-\psi_{F_i}^--\psi_{F_i}^+)^{d_i-1} & \parbox[]{8cm}{if, after possibly relabeling, $F_1,\dots,F_k$ form a flag $\F=(\emptyset\subsetneq F_1\subsetneq\dots\subsetneq F_k\subsetneq E)$,}\\
0 & \text{if }F_i\text{ and }F_j\text{ are incomparable for some }i,j.
\end{cases}
\]
\end{corollary}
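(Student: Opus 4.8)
The plan is to reduce Corollary~\ref{cor:selfintersectionmatroid} to Proposition~\ref{prop:selfintersection} by a straightforward induction, handling the vanishing case first. For the vanishing case, suppose $F_i$ and $F_j$ are incomparable for some $i\neq j$. Then the product $D_{F_1}^{d_1}\cdots D_{F_k}^{d_k}$ contains the factor $D_{F_i}D_{F_j}$, which lies in the ideal $\I$ and hence is zero in $A^*(\sM)$; since $A^*(\sM)$ is a commutative ring, the whole product vanishes. This disposes of the second case entirely, so from now on we may assume that $F_1,\dots,F_k$ form a flag, which after relabeling we write as $\F=(\emptyset\subsetneq F_1\subsetneq\dots\subsetneq F_k\subsetneq E)$, and we must establish the identity
\[
D_{F_1}^{d_1}\cdots D_{F_k}^{d_k}=D_\F\prod_{i=1}^k(-\psi_{F_i}^--\psi_{F_i}^+)^{d_i-1},
\]
where $D_\F=D_{F_1}\cdots D_{F_k}$.

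For the flag case, I would induct on $N=\sum_{i=1}^k(d_i-1)$, the total ``excess'' above the squarefree monomial. The base case $N=0$ means all $d_i=1$, so both sides equal $D_\F$ and there is nothing to prove. For the inductive step, pick any index $j$ with $d_j\geq 2$. Write $D_{F_1}^{d_1}\cdots D_{F_k}^{d_k}=D_{F_j}^2\cdot\big(D_{F_1}^{d_1}\cdots D_{F_j}^{d_j-2}\cdots D_{F_k}^{d_k}\big)$, apply Proposition~\ref{prop:selfintersection} to rewrite $D_{F_j}^2=D_{F_j}(-\psi_{F_j}^--\psi_{F_j}^+)$, and observe that the remaining factor now has excess $N-1$, so the inductive hypothesis applies to
\[
D_{F_j}\cdot D_{F_1}^{d_1}\cdots D_{F_j}^{d_j-2}\cdots D_{F_k}^{d_k}=D_{F_1}^{d_1}\cdots D_{F_j}^{d_j-1}\cdots D_{F_k}^{d_k},
\]
which is still a monomial in the divisors indexed by the same flag (with $d_j$ decremented by one). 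Multiplying the resulting expression by the factor $(-\psi_{F_j}^--\psi_{F_j}^+)$ pulled out via Proposition~\ref{prop:selfintersection} yields exactly the claimed right-hand side. The only point requiring a word of care is that the psi-class factors $(-\psi_{F_i}^--\psi_{F_i}^+)$ are elements of $A^1(\sM)$, hence commute with everything, so the order in which we extract them and the choice of which $d_j\geq 2$ to reduce at each stage do not matter; the final product is symmetric in the indices and independent of the reduction order.

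I do not anticipate a genuine obstacle here: the corollary is a formal consequence of Proposition~\ref{prop:selfintersection} together with commutativity of the Chow ring, and the only subtlety is bookkeeping the induction so that the intermediate monomials stay of the same form. If one prefers to avoid an explicit induction variable, one can instead argue directly by replacing each $D_{F_i}^{d_i}$ with $D_{F_i}\cdot D_{F_i}^{d_i-1}$ and then repeatedly substituting $D_{F_i}^2=D_{F_i}(-\psi_{F_i}^--\psi_{F_i}^+)$; since $D_{F_i}(-\psi_{F_i}^--\psi_{F_i}^+)$ again has $D_{F_i}$ as a factor, each such step lowers the power of $D_{F_i}$ by one while accumulating one factor of $(-\psi_{F_i}^--\psi_{F_i}^+)$, terminating after $d_i-1$ steps with $D_{F_i}(-\psi_{F_i}^--\psi_{F_i}^+)^{d_i-1}$. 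Performing this for each $i$ in turn, and using that all the extracted degree-one classes commute past the divisors $D_{F_{i'}}$, produces the stated formula.
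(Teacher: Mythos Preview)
Your proof is correct and follows the same approach as the paper, which simply states that the corollary results from ``repeatedly applying Proposition~\ref{prop:selfintersection}'' without further detail. Your induction on the total excess $N=\sum_i(d_i-1)$ is exactly the formalization of that phrase, and the vanishing case via the ideal $\I$ is implicit in the paper's setup.
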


Now that we know how to multiply arbitrary products of generators in $A^*(\sM)$, it remains to compute the degree of the resulting expression. The first step is the next result---generalizing Lemma~\ref{lem:product}---which reduces the computation to degrees of monomials in $\psi_0$ and $\psi_\infty$.

\begin{proposition}\label{prop:degreeofpsi2}
If $\F=(\emptyset=F_0\subsetneq F_1\subsetneq\dots\subsetneq F_k\subsetneq F_{k+1}=E)$ is a flag of flats and $a_0^+,a_1^-,a_1^+,\dots,a_{k}^-,a_k^+,a_{k+1}^-$ are nonnegative integers, then
\[
\deg_\sM\Big(D_\F\prod_{i=0}^{k}(\psi_{F_{i}}^+)^{a_{i}^+}(\psi_{F_{i+1}}^-)^{a_{i+1}^-}\Big)=\prod_{i=0}^k\deg_{\sM[F_i,F_{i+1}]}\Big(\psi_0^{a_{i}^+}\psi_\infty^{a_{i+1}^-}\Big).
\]
\end{proposition}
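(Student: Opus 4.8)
The plan is to mimic the geometric argument behind Lemma~\ref{lem:product}, but replacing the algebro-geometric tools (projection formula, product decomposition of strata, forgetful pullbacks) with their purely combinatorial analogues in $A^*(\sM)$. The key structural fact that must play the role of the decomposition $X_\F=\prod_i\LM_{F_{i+1}\setminus F_i}$ is a ring isomorphism (or at least a degree-compatible map) relating the ``restriction to the stratum $D_\F$'' to a tensor product $\bigotimes_{i=0}^k A^*(\sM[F_i,F_{i+1}])$. Concretely, I would first establish that multiplication by $D_\F$ factors through the quotient ring $A^*(\sM)/\mathrm{ann}(D_\F)$, and that this quotient is isomorphic to $\bigotimes_{i=0}^{k} A^*(\sM[F_i,F_{i+1}])$ via the map sending a generator $D_G$ (for $G$ comparable to all $F_j$, say $F_j\subseteq G\subseteq F_{j+1}$) to the generator $D_{G\setminus F_j}$ in the $j$-th tensor factor $A^*(\sM[F_j,F_{j+1}])$, and to $0$ otherwise. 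This is the combinatorial shadow of the fact that the stratum associated to a flag in a wonderful compactification is itself a product of smaller such compactifications; it should follow from a direct check that the relations in $\I+\J$ for $A^*(\sM)$, after multiplying by $D_\F$, reduce exactly to the relations defining the tensor product of the smaller Chow rings (the incomparability relations kill cross terms between blocks, and the linear relations $\sum_{e\in G}D_G=\sum_{f\in G}D_G$ restrict, block by block, to the linear relations of each $\sM[F_j,F_{j+1}]$, after one absorbs the ``constant'' contributions from the fixed flats $F_1,\dots,F_k$).

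The second step is to identify the images of the psi classes under this isomorphism. I would show that, under the map above, $D_\F\cdot\psi^+_{F_i}$ maps to $D_\F$ times the class $\psi_0\in A^1(\sM[F_i,F_{i+1}])$ sitting in the $i$-th tensor factor, and $D_\F\cdot\psi^-_{F_{i+1}}$ maps to $D_\F$ times $\psi_\infty\in A^1(\sM[F_i,F_{i+1}])$ in that same $i$-th factor. This is the analogue of the identities $g_\F^*(\psi^+_{F_i})=p_i^*(\psi_0)$ and $g_\F^*(\psi^-_{F_{i+1}})=p_i^*(\psi_\infty)$ in the sketch of Lemma~\ref{lem:product}. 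It should be a bookkeeping computation starting from Definition~\ref{def:matroidpsiclass}: in $\psi^+_{F_i}=\sum_{e\notin G}D_G-\sum_{G\subseteq F_i}D_G$, after multiplying by $D_\F$ only the $G$ lying in some interval $[F_j,F_{j+1}]$ survive, the $G\subseteq F_i$ subtracted terms cancel all blocks with $j<i$, the $e$-dependence (choosing $e\in F_{i+1}\setminus F_i$, which is legitimate since $\psi^+$ is independent of $e$) restricts the surviving blocks, and what remains in block $i$ is precisely the defining sum for $\psi_0$ in $\sM[F_i,F_{i+1}]$; the other blocks contribute the full $D_\F$ factor unchanged. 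The $\psi^-$ case is symmetric, choosing $e\in F_{i}\setminus F_{i-1}$.

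Given these two steps, the proof concludes formally: the product $D_\F\prod_{i=0}^k(\psi^+_{F_i})^{a_i^+}(\psi^-_{F_{i+1}})^{a^-_{i+1}}$ maps, under the tensor-product isomorphism, to $D_\F$ times $\bigotimes_{i=0}^k \psi_0^{a_i^+}\psi_\infty^{a^-_{i+1}}$, and the degree map $\deg_\sM$ restricted to the ideal generated by $D_\F$ is identified with the product of the degree maps $\deg_{\sM[F_i,F_{i+1}]}$ on the tensor factors (this last identification being the combinatorial analogue of the fact that the degree of a point on a product is the product of degrees of points, and can be pinned down using the normalization $\deg((-D_E)^r)=1$ together with the multiplicativity $D_{F_1}\cdots D_{F_r}=(-D_E)^r$ on complete flags recalled after Theorem~\ref{thm:basis}). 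Taking degrees yields exactly the claimed formula. I expect the main obstacle to be step one: carefully verifying that multiplication by $D_\F$ induces a ring isomorphism onto $\bigotimes_i A^*(\sM[F_i,F_{i+1}])$, rather than merely a surjection or a degree-compatible linear map, and in particular checking that no relations are lost or spuriously introduced when one passes to the quotient by the annihilator of $D_\F$ — this is where the matroid axioms (especially property (2), governing which flats lie minimally above a given flat) must be invoked to control exactly which products $D_GD_{G'}$ vanish. An alternative, possibly cleaner route that avoids constructing the isomorphism explicitly is to induct on $k$: peeling off the top block via the factorization $A^*(\sM)\to A^*(\sM|_{F_{k}})\otimes A^*(\sM[F_k,E])$ coming from a single flat $F_k$, reducing the $k$-block statement to the $1$-block statement plus the inductive hypothesis; the $k=0$ base case is vacuous and the $k=1$ case is the essential content.
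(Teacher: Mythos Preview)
Your proposal is essentially the paper's approach: relate $D_\F\cdot(-)$ to the tensor product $\bigotimes_i A^*(\sM[F_i,F_{i+1}])$, identify $\psi_{F_i}^+,\psi_{F_{i+1}}^-$ with $\psi_0,\psi_\infty$ in the $i$-th factor, and compare degrees via complete flags. The one difference is that the paper sidesteps exactly the obstacle you anticipate in step one. Rather than proving that $A^*(\sM)/\mathrm{ann}(D_\F)\cong\bigotimes_i A^*(\sM[F_i,F_{i+1}])$ as rings, the paper builds the map in the \emph{opposite} direction: for each $i$ it defines $\phi_i:\Z[X_G\mid G\in\cL(F_i,F_{i+1})]\to A^*(\sM)$ by $X_G\mapsto D_{G\cup F_i}$, then sets $\widehat\phi_i(\gamma)=D_\F\,\phi_i(\gamma)$, checks that this \emph{linear} (not multiplicative) map kills the relations $\I,\J$ of the small Chow ring, and tensors these together into a linear map $\phi_\F:\bigotimes_i A^*(\sM[F_i,F_{i+1}])\to A^*(\sM)$. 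No injectivity, no annihilator computation, no ring structure on the target side is needed: one simply writes each $\psi_0^{a_i^+}\psi_\infty^{a_{i+1}^-}$ as a $\Z$-combination of complete-flag monomials in the small ring, observes that $\phi_\F$ carries a tensor of complete flags to a complete-flag monomial in $\sM$, and reads off the degree as the product of the coefficient sums. This is precisely the ``degree-compatible linear map'' shortcut you mention as a possible fallback; the paper takes it from the start.
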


\begin{proof}
For each $i=0,\dots,k$, define an algebra homomorphism from the polynomial ring $\Z[X_G\;|\;G\in\cL(F_i,F_{i+1})]$ to the matroid Chow ring of $\sM$ as follows:
\begin{align*}
\phi_i:\Z[X_G\;|\;G\in\cL(F_i,F_{i+1})]&\rightarrow A^*(\sM) \\
X_G&\mapsto D_{G\cup F_i}.
\end{align*}
Unfortunately, the ideal $\J$ is not in the kernel of $\phi_i$, so $\phi_i$ does not descend to a homomorphism from the Chow ring $A^*(\sM[F_i,F_{i+1}])$. Let us modify $\phi_i$ by multiplying by $D_\F$:
\begin{align*}
\widehat\phi_i:\Z[X_G\;|\;G\in\F(F_i,F_{i+1})]&\rightarrow A^*(\sM) \\
\gamma&\mapsto \D_\F \phi_i(\gamma).
\end{align*}
Notice that $\widehat\phi_i$ is linear, but not multiplicative. We claim that the linear map $\widehat\phi_i$ descends to the Chow ring $A^*(\sM[F_i,F_{i+1}])$. To prove this, it suffices to check that the generators of both $\I$ and $\J$ are contained in the kernel of $\widehat\phi_i$.

First, notice that if $\emptyset\subsetneq G_1,G_2\subsetneq F_{i+1}\setminus F_{i}$ are incomparable, then $G_1\cup F_{i}$ and $G_2\cup F_{i}$ are also incomparable. This implies that $\widehat\phi_i(X_{G_1}X_{G_2})=0$ for incomparable $G_1,G_2\in\cL(F_i,F_{i+1})$, proving that $\hat\phi_i$ descends to the quotient by $\I$. Secondly, if $e,f\in F_{i+1}\setminus F_{i}$, then
\begin{align*}
\widehat\phi_i\Big(\sum_{G\in\cL(F_i,F_{i+1}) \atop e\in G}X_{G}-\sum_{G\in\cL(F_i,F_{i+1}) \atop f\in G}X_{G}\Big)
&=D_\F\Big(\sum_{F_{i}\subsetneq H\subsetneq F_{i+1} \atop e\in H}D_H-\sum_{F_{i}\subsetneq H\subsetneq F_{i+1} \atop f\in H}D_H\Big)\\
&=D_\F\Big(\sum_{ H\in\cL^* \atop e\in H}D_H-\sum_{H\in\cL^*\atop f\in H}D_H\Big)=0.
\end{align*}
The second equality above uses the following observations.
\begin{enumerate}
\item The only flats $H\in\cL^*$ that survive multiplication by $D_\F$ are those that are comparable with both $F_i$ and $F_{i+1}$.
\item If $e\in H$ or $f\in H$, then the only way that $H$ is comparable with $F_{i}$ is if $H\supsetneq F_{i}$.
\item If $H\supseteq F_{i+1}$, then $e,f\in H$, so the terms cancel in the difference in the final formula.
\end{enumerate}
Thus, $\widehat\phi_i$ descends to the quotient by $\I+\J$, and by a slight abuse of notation, we use the same notation to represent the induced linear map: $\widehat\phi_i: A^*(\sM[F_i,F_{i+1}])\rightarrow A^*(\sM)$.

Using multi-linearity, we combine the linear maps $\widehat\phi_i$ to obtain a linear map
\begin{align*}
\phi_\F:\bigotimes_{i=0}^{k}A^*(\sM[F_i,F_{i+1}])&\rightarrow A^*(\sM)\\
\gamma_0\otimes\dots\otimes \gamma_k&\mapsto D_\F\prod_{i=0}^k\phi_i(\gamma_i)
\end{align*}
Notice that, for any $e\in F_{i+1}\setminus F_{i}$, we have
\begin{align*}
\widehat\phi_i(\psi_0)=\widehat\phi_i\Big(\sum_{G\in\cL(F_i,F_{i+1}) \atop e\notin G}D_G\Big)
&=D_\F\sum_{F_{i}\subsetneq H\subsetneq F_{i+1} \atop e\notin H}D_H\\
&=D_\F\Big(\sum_{e\notin H}D_H-\sum_{H\subseteq F_{i}}D_H\Big)=D_\F\psi_{F_{i}}^+.
\end{align*}
Similarly, it can be checked that $\widehat\phi_i(\psi_\infty)=D_{\F}\psi_{F_{i+1}}^-$. It then follows from the definition of $\phi_\F$ that
\[
\phi_\F\Big(\bigotimes_{i=0}^{k}\psi_0^{a_{i}^+}\psi_\infty^{a_{i+1}^-}\Big)=D_\F\prod_{i=0}^{k}(\psi_{F_{i}}^+)^{a_{i}^+}(\psi_{F_{i+1}}^-)^{a_{i+1}^-}.
\]

Notice that, to compute $\deg_{\sM[F_i,F_{i+1}]}\big(\psi_0^{a_{i}^+}\psi_\infty^{a_{i+1}^-}\big)$, we can use the relations in $\I$ and $\J$ to find an express $\psi_0^{a_{i}^+}\psi_\infty^{a_{i+1}^-}$ as a linear combination of the form
\begin{equation}\label{eq:linearcombo}
\psi_0^{a_{i}^+}\psi_\infty^{a_{i+1}^-}=\sum_{\text{complete flags }\F^{(i)} \atop \text{in }\sM[F_i,F_{i+1}]}a_{\F^{(i)}} D_{\F^{(i)}},
\end{equation}
and then compute
\[
\deg_{\sM[F_i,F_{i+1}]}(\psi_0^{a_{i}^+}\psi_\infty^{a_{i+1}^-})=\sum_{\F^{(i)}}a_{\F^{(i)}}.
\]
Making one choice of expression \eqref{eq:linearcombo} for each $i=0,\dots,k$, we can apply $\phi_\F$ to obtain
\begin{align}\label{eq:sumoverflags}
\nonumber \phi_\F\Big(\bigotimes_{i=0}^{k}\psi_0^{a_{i}^+}\psi_\infty^{a_{i+1}^-}\Big)
&=\phi_\F\Big(\bigotimes_{i=0}^{k}\sum_{\F^{(i)}}a_{\F^{(i)}}D_{\F^{(i)}}\Big)\\
&=D_\F \sum_{\F^{(0)},\dots,\F^{(k)}}a_{\F^{(0)}}\cdots a_{\F^{(k)}}D_{\F^{(1)}\cup F_0}\cdots D_{\F^{(k)}\cup F_k},
\end{align}
where, for any flag $\F^{(i)}=(F_1^{(i)}\subsetneq\dots\subsetneq F_{k_i}^{(i)})$ of flats in $\sM[F_i,F_{i+1}]$, we define
\[
\F^{(i)}\cup F_{i}=(F_1^{(i)}\cup F_{i}\subsetneq\dots\subsetneq F_{k_i}^{(i)}\cup F_{i}),
\]
which is a flag of flats in $\sM$. Since each $\F^{(i)}$ is a complete flag of flats in $\sM[F_i,F_{i+1}]$, it follows that the sets in 
\[
\F\cup \bigcup_{i=0}^{k} (\F^{(i)}\cup F_i)
\]
form a complete flag of flats in $\sM$. Therefore, the products of generators in \eqref{eq:sumoverflags} are indexed by complete flags in $\sM$, and we conclude that
\begin{align*}
\deg_\sM\bigg(D_\F\prod_{i=0}^{k}(\psi_{F_{i}}^+)^{a_{i}^+}(\psi_{F_{i+1}}^-)^{a_{i+1}^-}\bigg)&=\sum_{\F^{(0)},\dots,\F^{(k)}}a_{\F^{(0)}}\cdots a_{\F^{(k)}}\\
&=\prod_{i=0}^{k}\sum_{\F^{(i)}}a_{\F^{(i)}}\\
&=\prod_{i=0}^{k}\deg_{\sM[F_i,F_{i+1}]}(\psi_0^{a_{i}^+}\psi_\infty^{a_{i+1}^-}).\qedhere
\end{align*}
\end{proof}

It now remains to compute degrees of monomials in $\psi_0$ and $\psi_\infty$. The key result in this regard---which is listed as Proposition~\ref{prop:degreeofpsi} below---relates these degree computations to the coefficients of reduced characteristic polynomials. This result was previously proved by Adiprasito, Huh, and Katz \cite[Proposition~9.5]{AHK}, but we find it instructive to give an alternative proof, motivated by the proof of Lemma~\ref{lem:degreeofpsi}, which uses properties of psi classes. We begin by introducing an analogue of the pullbacks of the forgetful maps.

\begin{proposition}\label{prop:pullbackpsi}
If $S\subseteq E$ is any subset, then there is a well-defined homomorphism $\rho_S:A^*(\sM|_S)\rightarrow A^*(\sM)$ defined on generators by
\[
\rho_S(D_G)=\sum_{G'\in\cL^* \atop G\subseteq G'\subseteq G\cup S^c}D_{G'}.
\]
In addition, if $S_1\subseteq S_2$ are nonempty flats, then $\rho_{S_1}=\rho_{S_1}\circ \rho_{S_2}$.
\end{proposition}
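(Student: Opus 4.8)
The plan is to verify the two assertions separately: first that $\rho_S$ is well-defined as a ring homomorphism $A^*(\sM|_S) \to A^*(\sM)$, and then that the compatibility $\rho_{S_1} = \rho_{S_1} \circ \rho_{S_2}$ holds for nested flats. For the first part, the map is prescribed on the polynomial generators $X_G$ (for $G$ a proper flat of $\sM|_S$, i.e.\ $G = F \cap S$ for some $F \in \cL$), so by the universal property of polynomial rings it extends uniquely to an algebra homomorphism from $\Z[X_G]$ to $A^*(\sM)$; the content is to check that the ideals $\I_{\sM|_S}$ and $\J_{\sM|_S}$ land in the kernel. This is entirely parallel in spirit to the verification carried out for $\widehat\phi_i$ in the proof of Proposition~\ref{prop:degreeofpsi2}, except that here the map is genuinely multiplicative (no $D_\F$ correction factor is needed), which is why $\J$ will actually be killed.

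For the $\I$ relations: if $G_1, G_2$ are incomparable flats of $\sM|_S$, I claim $\rho_S(D_{G_1})\rho_S(D_{G_2}) = 0$. Expanding the product, a typical cross term is $D_{G_1'} D_{G_2'}$ with $G_j \subseteq G_j' \subseteq G_j \cup S^c$; the key observation is that $G_j' \cap S = G_j$ (since $G_j$ is a flat of $\sM$ containing $G_j \cap S'$... more carefully, $G_j \subseteq G_j' \cap S \subseteq (G_j \cup S^c)\cap S = G_j$), so if $G_1', G_2'$ were comparable, intersecting with $S$ would force $G_1, G_2$ comparable, a contradiction; hence every cross term vanishes in $A^*(\sM)$. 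For the $\J$ relations: fix $e, f \in S$ and consider $\rho_S\big(\sum_{e \in G} X_G - \sum_{f \in G} X_G\big)$, the sum over proper flats $G$ of $\sM|_S$. Pushing $\rho_S$ through, one gets $\sum_{G'} c_{G'} D_{G'}$ over proper flats $G'$ of $\sM$, where $c_{G'}$ counts flats $G$ of $\sM|_S$ with $G \subseteq G' \subseteq G \cup S^c$ and $e \in G$, minus the same with $f \in G$. The crucial point is that for fixed $G'$, there is a unique candidate $G$, namely $G = G' \cap S$ (by the intersection computation above, this is the only flat of $\sM|_S$ that can appear), and $e \in G' \cap S \iff e \in G'$ since $e \in S$. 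Therefore $c_{G'} = [e \in G'] - [f \in G']$, so $\rho_S\big(\sum_{e\in G}X_G - \sum_{f \in G}X_G\big) = \sum_{e \in G'} D_{G'} - \sum_{f \in G'} D_{G'} = 0$ in $A^*(\sM)$, this last equality being exactly a defining relation in $\J_\sM$. This establishes that $\rho_S$ descends to $A^*(\sM|_S)$.

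For the compatibility statement with $S_1 \subseteq S_2$ nonempty flats: both $\rho_{S_1}$ and $\rho_{S_1}\circ\rho_{S_2}$ are ring homomorphisms $A^*(\sM|_{S_1}) \to A^*(\sM)$, so it suffices to check they agree on generators $D_G$ with $G$ a proper flat of $\sM|_{S_1}$. On one side, $\rho_{S_1}(D_G) = \sum_{G \subseteq G' \subseteq G \cup S_1^c} D_{G'}$. On the other side, $\rho_{S_2}(D_G) = \sum_{G \subseteq H \subseteq G \cup S_2^c} D_H$ as an element of $A^*(\sM|_{S_2})$ (note $S_1 \subseteq S_2$ makes $G$ a flat of $\sM|_{S_2}$ too), and then applying $\rho_{S_1}$ termwise gives $\sum_H \sum_{H \subseteq G'' \subseteq H \cup S_2^c \cup \cdots}$—here I need to be careful that $\rho_{S_1}$ is the pullback along $\sM|_{S_2} \to (\sM|_{S_2})|_{S_1} = \sM|_{S_1}$, whose complement set within $S_2$ is $S_2 \setminus S_1$. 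So $\rho_{S_1}(D_H) = \sum_{H \subseteq G' \subseteq H \cup (S_2 \setminus S_1)} D_{G'}$. Composing, $\rho_{S_1}\rho_{S_2}(D_G) = \sum_{G' } n_{G'} D_{G'}$ where $n_{G'}$ counts flats $H$ of $\sM|_{S_2}$ with $G \subseteq H \subseteq G \cup S_2^c$ and $H \subseteq G' \subseteq H \cup (S_2 \setminus S_1)$. One checks $G \cup S_2^c \cup (S_2 \setminus S_1) = G \cup S_1^c$, so every such $G'$ satisfies $G \subseteq G' \subseteq G \cup S_1^c$; conversely for each such $G'$ there is exactly one valid $H$, namely $H = G' \cap S_2$ (the flat of $\sM|_{S_2}$ forced by the sandwiching conditions, using the same intersection-with-$S_2$ argument as before). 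Hence $n_{G'} = 1$ for all $G'$ in the range and $\rho_{S_1}\rho_{S_2}(D_G) = \sum_{G \subseteq G' \subseteq G \cup S_1^c} D_{G'} = \rho_{S_1}(D_G)$, completing the proof.

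\textbf{Main obstacle.} The routine-looking but genuinely load-bearing step is the bijection/uniqueness claim "for each $G'$ in the appropriate range there is exactly one flat $G$ of the smaller restriction with $G \subseteq G' \subseteq G \cup S^c$, namely $G = G' \cap S$." This requires knowing that $G' \cap S$ is a flat of $\sM|_S$ (immediate from the definition of restriction) and that it is the \emph{unique} such flat, which follows because $G \subseteq G' \cap S$ always and $G' \subseteq G \cup S^c$ forces $G' \cap S \subseteq G$. I expect to spend most of the write-up making this intersection bookkeeping airtight, and the set identity $(G \cup S_2^c) \cup (S_2 \setminus S_1) = G \cup S_1^c$ needs $S_1 \subseteq S_2$ and is worth stating explicitly.
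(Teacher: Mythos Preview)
Your proposal is correct and follows essentially the same route as the paper's proof: verify that $\I$ and $\J$ land in the kernel of the map on the polynomial ring, then check the composition identity on generators using the uniqueness of the intermediate flat (namely $G' \cap S$, respectively $G'' \cap S_2$). The only cosmetic difference is in the $\I$-step, where the paper exhibits explicit witnesses $e \in G_1 \setminus G_2$ and $f \in G_2 \setminus G_1$ (both lying in $S$) to see that every pair $(G_1',G_2')$ is incomparable, whereas you argue via the identity $G_j' \cap S = G_j$; these are equivalent one-line observations.
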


Notice that the sum in the definition of $\rho_S$ is over all flats of $\sM$ that are obtained from $G\subsetneq S$ by adding elements of $S^c$. In particular, the set $G$ is determined by any of the $G'$ via $G=G'\cap S$. In the special case of $\sM=([n],2^{[n]})$,we have $\rho_S=r_S^*$.

\begin{proof}[Proof of Proposition~\ref{prop:pullbackpsi}]
Define the homomorphism
\begin{align*}
\rho_S:\Z[X_G\;|\;G\in\cL_{\sM|_S}^*]&\rightarrow A^*(\sM)\\
X_G&\mapsto \sum_{G'\in\cL^* \atop G\subseteq G'\subseteq G\cup S^c}D_{G'}.
\end{align*}
To show that $\rho_S$ descends to a homomorphism from the Chow ring, we must verify that $\I$ and $\J$ are contained in the kernel of $\rho_S$. First, suppose that $G_1$ and $G_2$ are incomparable flats of $\sM|_S$. Then there exists $e,f\in S$ such that $e\in G_1\setminus G_2$ and $f\in G_2\setminus G_1$. Notice that every term in the sum defining $\rho_S(X_{G_1})$ is indexed by a set that contains $e$ but not $f$ and every term in the sum defining $\rho_S(X_{G_2})$ is indexed by a set that contains $f$ but not $e$. It follows that
\[
\rho_S(X_{G_1}X_{G_2})=\rho_S(X_{G_1})\rho_S(X_{G_2})=0\in A^*(\sM),
\]
showing that $\rho_S$ descends to the quotient by $\I$. Next, to show that $\rho_S$ descends to the quotient by $\J$, suppose that $e,f\in S$. Then
\begin{align*}
\rho_S\Big(\sum_{e\in G}X_G-\sum_{f\in G}X_G\Big)&=\sum_{e\in G\subseteq G'\subseteq G\cup S^c}D_{G'}-\sum_{ f\in G\subseteq G'\subseteq G\cup S^c}D_{G'}\\
&=\sum_{e\in G'\atop G'\not\supseteq S }D_{G'}-\sum_{f\in G'\atop G'\not\supseteq S}D_{G'}\\
&=\sum_{e\in G'}D_{G'}-\sum_{f\in G'}D_{G'}=0.
\end{align*}
The second equality above is implied by the following observations.
\begin{enumerate}
\item We are assuming that $\emptyset\subsetneq G\subsetneq S$. Since $G=G'\cap S$, this condition is equivalent to $\emptyset\subsetneq G'\cap S\subsetneq S$. Since $G'\cap S$ is nonempty (it always contains $e$ in the first sum and $f$ in the second), this is equivalent to $G'\not\supseteq S$.
\item Since $G=G'\cap S$ and $e\in S$, it follows that
\[
e\in G\Longleftrightarrow e\in (G'\cap S)\Longleftrightarrow e\in G'.
\]
\end{enumerate}
The third equality above is implied by the fact that every set $G'\supseteq S$ appears in both sums in the final expression, so these terms cancel. This completes the proof that $\rho_S$ descends to the quotient by $\J$. Thus, $\phi_S$ descends to the quotient by $\I+\J$ and induces the homomorphism whose existence is asserted in the proposition.

To finish the proof of the proposition, it remains to check that $\rho_{S_1}=\rho_{S_1}\circ \rho_{S_2}$. Notice that
\[
\rho_{S_1}\circ \rho_{S_2}(D_G)=\sum_{G\subseteq G'\subseteq G\cup S_2^c \atop G'\subseteq G''\subseteq G'\cup S_1^c}D_{G''}=\sum_{G\subseteq G''\subseteq G\cup S_1^c}D_{G''}=\rho_{S_1}(D_G),
\]
where the second equality uses that $G'$ is uniquely determined from $G''$ via $G'=G''\cap S_1$.
\end{proof}

The next result describes how $\psi_0$ and $\psi_\infty$ transform under the homomorphisms $\rho_S$ described in Proposition~\ref{prop:pullbackpsi}. The second statement of the result gives an alternative characterization of $\psi_F^{\pm}$ that generalizing Definition~\ref{def:generalpsi}.

\begin{proposition}\label{prop:pullbackpsi}
For any subset $S\subseteq E$, we have
\[
\rho_S(\psi_0)=\psi_0-\sum_{G\subseteq S^c} D_{G}\;\;\;\text{ and }\;\;\;\rho_S(\psi_\infty)=\psi_\infty-\sum_{G\supseteq S} D_{G}.
\]
In particular, if $F\in\cL^*$ is a proper flat, then
\[
\psi_F^-=\rho_F(\psi_\infty)\;\;\;\text{ and }\;\;\;\psi_F^+=\rho_{F^c}(\psi_0).
\]
\end{proposition}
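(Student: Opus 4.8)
The plan is to prove the first pair of identities by a direct computation on generators, and then derive the "in particular" statement by recognizing the relevant sums of divisors as the defining expressions for $\psi_F^\pm$ from Definition~\ref{def:matroidpsiclass}. Recall from Definition~\ref{def:matroidpsiclass} that, after fixing any $e\in E$, we have $\psi_0=\sum_{G\in\cL^*,\,e\notin G}D_G$ and $\psi_\infty=\sum_{G\in\cL^*,\,e\in G}D_G$. Since $\psi_0=\rho_\emptyset$-nothing, it is cleanest to think of $\psi_0,\psi_\infty\in A^1(\sM|_S)$ (with ground set $S$) and apply $\rho_S$ termwise using the formula $\rho_S(D_G)=\sum_{G'\in\cL^*,\,G\subseteq G'\subseteq G\cup S^c}D_{G'}$ from the previous proposition.

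First I would compute $\rho_S(\psi_\infty)$. Choosing $e\in S$, the class $\psi_\infty\in A^1(\sM|_S)$ is $\sum_{G\in\cL_{\sM|_S}^*,\,e\in G}D_G$, so
\[
\rho_S(\psi_\infty)=\sum_{G\in\cL_{\sM|_S}^*,\,e\in G}\ \sum_{G\subseteq G'\subseteq G\cup S^c}D_{G'}.
\]
As in the proof of Proposition~\ref{prop:pullbackpsi} (the verification that $\rho_S$ kills $\J$), the right-hand side equals $\sum_{e\in G',\,G'\not\supseteq S}D_{G'}$, since $G'$ ranges exactly over the proper flats of $\sM$ with $G'\cap S$ a nonempty proper subset of $S$ containing $e$, which is equivalent to $e\in G'$ and $G'\not\supseteq S$. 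Now I would write $\sum_{e\in G',\,G'\not\supseteq S}D_{G'}=\sum_{e\in G'}D_{G'}-\sum_{G'\supseteq S}D_{G'}=\psi_\infty-\sum_{G'\supseteq S}D_{G'}$, using $\psi_\infty=-D_E=\sum_{e\in G'}D_{G'}$ (here one should note that when $S$ is not contained in any proper flat the correction sum is empty, and when $S^c=\emptyset$, i.e. $S=E$, the map $\rho_S$ is the identity and the sum $\sum_{G'\supseteq E}D_{G'}$ is empty, so both sides reduce to $\psi_\infty$; these degenerate cases are worth a one-line remark). The computation of $\rho_S(\psi_0)$ is entirely parallel: take $e\notin S$ is not possible if $S=E$, so instead take $e\in S$ and use $\psi_0=\sum_{G\in\cL_{\sM|_S}^*,\,e\notin G}D_G$; applying $\rho_S$ termwise and running the same bookkeeping gives $\sum_{e\notin G',\,G'\not\supseteq S}D_{G'}$, and since $e\notin G'$ already forces $G'\not\supseteq S$ (as $e\in S$), this is $\sum_{e\notin G'}D_{G'}-\sum_{G'\subseteq S^c}D_{G'}=\psi_0-\sum_{G'\subseteq S^c}D_{G'}$, where in the last step I rewrite "$G'\not\supseteq S$ and $e\notin G'$" — wait, more carefully: the constraint from the range of $\rho_S$ is $G'\cap S$ a proper nonempty subset of $S$ not containing $e$, i.e. $G'\cap S$ nonempty (hence the correction) — the correct endpoint is that the terms with $G'\cap S=\emptyset$, i.e. $G'\subseteq S^c$, are exactly those missing, giving $\psi_0-\sum_{G'\subseteq S^c}D_{G'}$.

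Finally, for the "in particular" statement, set $S=F$ with $F\in\cL^*$: then $\rho_F(\psi_\infty)=\psi_\infty-\sum_{G\supseteq F}D_G=\sum_{e\in G}D_G-\sum_{G\supseteq F}D_G=\psi_F^-$ by Definition~\ref{def:matroidpsiclass}. Setting $S=F^c$ (noting $F^c$ need not be a flat, but $\rho_S$ is defined for arbitrary subsets $S$, and $(F^c)^c=F$): $\rho_{F^c}(\psi_0)=\psi_0-\sum_{G\subseteq F}D_G=\sum_{e\notin G}D_G-\sum_{G\subseteq F}D_G=\psi_F^+$, again directly from the definition. I expect the main obstacle to be purely bookkeeping: carefully justifying the collapse of the double sum $\sum_{G}\sum_{G\subseteq G'\subseteq G\cup S^c}$ to a single sum over $G'$ with the constraint "$G'\cap S$ nonempty proper", which is exactly the manipulation already carried out in the proof of the preceding Proposition~\ref{prop:pullbackpsi} and can be cited, together with keeping straight that $\psi_0$ and $\psi_\infty$ on the source $A^*(\sM|_S)$ are built from an element $e\in S$ while their images live in $A^*(\sM)$.
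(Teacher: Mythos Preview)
Your approach is correct and essentially identical to the paper's: apply $\rho_S$ termwise to the defining sums for $\psi_0$ and $\psi_\infty$, collapse the double sum via the bijection $G=G'\cap S$ (exactly the bookkeeping from the proof of the preceding proposition), and then read off $\psi_F^\pm$ from Definition~\ref{def:matroidpsiclass}. The only cosmetic issue is the mid-argument self-correction in the $\psi_0$ computation---the intermediate constraint should be $G'\not\subseteq S^c$ (from $G'\cap S\neq\emptyset$) rather than $G'\not\supseteq S$, and you do land on this after your ``wait, more carefully''; just clean that up in the final write-up.
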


\begin{proof}
For $\psi_0$, we compute
\begin{align*}
\rho_S(\psi_0)&=\rho_S\Big(\sum_{e\notin G}D_G\Big)\\
&=\sum_{e\notin G\subseteq G'\subseteq G\cup S^c}D_{G'}
\end{align*}
Arguing as in the proof of the previous proposition, the index in the last sum can be replaced with $e\notin G'$ and $G'\not\subseteq S^c$, proving that
\[
\rho_S(\psi_0)=\sum_{e\notin G'}D_{G'}-\sum_{G'\subseteq S^c} D_{G'}.
\]
The argument for $\rho_S(\psi_\infty)$ is similar.
\end{proof}

We now come to the generalization of Lemma~\ref{lem:degreeofpsi} to the matroid setting. As mentioned above, this result was previously proved by Huh and Katz \cite[Proposition~5.2]{HuhKatz}, though our formulation is more closely aligned with the presentation of Adiprasito, Huh, and Katz \cite[Proposition 9.5]{AHK}. Our proof relies on the recursive nature of the characteristic polynomial, and we note that this proof technique, using the deletion-contraction recursion, also appears in a different, more general form in recent work of Berget, Eur, Spink, and Tseng \cite[Theorem A]{BEST}.

\begin{proposition}{\cite[Proposition 9.5]{AHK}}\label{prop:degreeofpsi}
For nonnegative integers $a,b$, we have
\[
\deg_\sM(\psi_0^a\psi_\infty^b)=\begin{cases}
\mu^a(\sM) &\text{if }a+b=r,\\
0 &\text{else,}
\end{cases}
\]
where $\mu^a(M)$ is the $a$-th unsigned coefficient of the reduced characteristic polynomial of $\sM$:
\[
\overline\chi_\sM(\lambda)=\sum_{a=0}^r(-1)^a\mu^a(\sM)\lambda^{r-a}.
\]
\end{proposition}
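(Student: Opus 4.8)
The plan is to induct on $|E|$ and reproduce, at the level of degrees, the deletion--contraction recursion satisfied by $\ochi_\sM$. First dispose of the case $a+b\neq r$: if $a+b>r$ then $\psi_0^a\psi_\infty^b=0$ since $A^{k}(\sM)=0$ for $k>r$ by Theorem~\ref{thm:basis}, and if $a+b<r$ the class lies outside the top graded piece, so (extending $\deg_\sM$ by zero below degree $r$) its degree is $0$; hence assume $a+b=r$. For the base case $\rk(\sM)=1$ we have $\cL^*=\emptyset$, so $A^*(\sM)=\Z$, $\psi_0=\psi_\infty=0$, and $\deg_\sM(\psi_0^a\psi_\infty^b)$ is $1$ if $a=b=0$ and $0$ otherwise, matching $\ochi_\sM=1$. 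For the inductive step fix $e\in E$; since $\sM$ is loopless so is $\sM\setminus e$, and I will express $\deg_\sM(\psi_0^a\psi_\infty^b)$ through degrees on $\sM\setminus e$ and $\sM/e$ (which have smaller ground set) using the homomorphism $\rho:=\rho_{E\setminus\{e\}}\colon A^*(\sM\setminus e)\to A^*(\sM)$ of Proposition~\ref{prop:pullbackpsi} together with Corollary~\ref{cor:selfintersectionmatroid} and Proposition~\ref{prop:degreeofpsi2}, splitting into the cases ``$e$ a coloop'' and ``$e$ not a coloop''.

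When $e$ is \emph{not} a coloop, $E\setminus\{e\}$ is not a flat, so Proposition~\ref{prop:pullbackpsi} gives $\rho(\psi_\infty)=\psi_\infty$ and $\rho(\psi_0)=\psi_0-\epsilon D_{\{e\}}$, where $\epsilon=1$ if $\{e\}$ is a proper flat and $\epsilon=0$ otherwise. Since $\rk(\sM\setminus e)=r+1$, the class $\psi_\infty^r$ generates $A^r(\sM\setminus e)$ with degree $1$, and $\rho$ sends it to $\psi_\infty^r\in A^r(\sM)$, again of degree $1$; hence $\deg_\sM\circ\rho=\deg_{\sM\setminus e}$. Using the immediate identity $D_{\{e\}}\psi_0=0$ (every proper flat appearing in $\psi_0$ misses $e$, hence is incomparable to $\{e\}$), one gets $(\psi_0-\epsilon D_{\{e\}})^a=\psi_0^a+\epsilon(-D_{\{e\}})^a$ for $a\ge1$, so that
\[
\mu^a(\sM\setminus e)=\deg_{\sM\setminus e}(\psi_0^a\psi_\infty^b)=\deg_\sM(\psi_0^a\psi_\infty^b)+\epsilon\,\deg_\sM\!\big((-D_{\{e\}})^a\psi_\infty^b\big),
\]
the first equality by the inductive hypothesis. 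If $\epsilon=0$ then $e$ has a parallel element, so $\sM/e$ has a loop and $\ochi_\sM=\ochi_{\sM\setminus e}$, finishing this case. If $\epsilon=1$, I compute the correction by applying Corollary~\ref{cor:selfintersectionmatroid} to rewrite $D_{\{e\}}^a$ and then Proposition~\ref{prop:degreeofpsi2} to the flag $\emptyset\subsetneq\{e\}\subsetneq E$: the rank-one factor $\sM[\emptyset,\{e\}]$ kills every positive power of $\psi_{\{e\}}^-$, leaving only one surviving monomial, whence $\deg_\sM((-D_{\{e\}})^a\psi_\infty^b)=-\mu^{a-1}(\sM/e)$ by the inductive hypothesis applied to $\sM/e$ (loopless since $e$ is parallel-free). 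Thus $\deg_\sM(\psi_0^a\psi_\infty^b)=\mu^a(\sM\setminus e)+\mu^{a-1}(\sM/e)=\mu^a(\sM)$ by deletion--contraction, and the degenerate case $a=0$ is handled directly.

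When $e$ \emph{is} a coloop, put $F_0=E\setminus\{e\}\in\cL^*$ and $\sM'=\sM|_{F_0}=\sM\setminus e=\sM/e$, of rank $r$; now $\rho=\rho_{F_0}$ satisfies $\rho(\psi_0)=\psi_0-D_{\{e\}}$ and $\rho(\psi_\infty)=\psi_\infty-D_{F_0}$, and one has the vanishings $D_{\{e\}}\psi_0=0$, $D_{F_0}\psi_\infty=0$, $D_{\{e\}}D_{F_0}=0$. These force $\psi_0^a=\rho(\psi_0)^a+D_{\{e\}}\rho(\psi_0)^{a-1}$ and $\psi_\infty^b=\rho(\psi_\infty)^b+D_{F_0}\rho(\psi_\infty)^{b-1}$; multiplying these, the pure $\rho$-term vanishes (it is $\rho$ of a class in $A^r(\sM')=0$), the $D_{\{e\}}D_{F_0}$-term vanishes, and the remaining two terms, after replacing the top-degree classes of $\sM'$ by multiples of $\psi_\infty^{r-1}$ and evaluating $\deg_\sM(D_{F_0}^r)$ and $\deg_\sM(D_{\{e\}}\psi_\infty^{r-1})$ via Proposition~\ref{prop:degreeofpsi2} applied to the flags $\emptyset\subsetneq F_0\subsetneq E$ and $\emptyset\subsetneq\{e\}\subsetneq E$, yield exactly $\mu^a(\sM')$ and $\mu^{a-1}(\sM')$ by the inductive hypothesis. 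Since $\ochi_\sM=(\lambda-1)\ochi_{\sM'}$ gives $\mu^a(\sM)=\mu^a(\sM')+\mu^{a-1}(\sM')$, this closes the induction.

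I expect the main obstacle to be bookkeeping rather than conceptual: in each expansion one must isolate precisely which monomials in the psi classes survive the degree pairing, the recurring principle being that a rank-one factor $\sM[F,F']$ in Proposition~\ref{prop:degreeofpsi2} annihilates every positive power of its $\psi_0$ and $\psi_\infty$, and then one must reconcile the accumulated signs with the deletion--contraction recursion for $\ochi_\sM$. There is also the mild nuisance of distinguishing whether $e$ is a coloop, a parallel element, or neither; once the vanishing identities $D_{\{e\}}\psi_0=0$ and $D_{F_0}\psi_\infty=0$ are in place, each case collapses quickly.
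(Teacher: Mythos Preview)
Your proposal is correct and follows essentially the same approach as the paper's proof: induction on $|E|$, using the pullback $\rho_{E\setminus\{e\}}$ together with Corollary~\ref{cor:selfintersectionmatroid} and Proposition~\ref{prop:degreeofpsi2} to reproduce the deletion--contraction recursion for $\ochi_\sM$, with a case split on whether $e$ is a coloop. The only cosmetic difference is that the paper first passes to the simplification $\underline\sM$ (so that $\{e\}$ is automatically a flat), whereas you keep $\sM$ arbitrary and absorb the parallel case into your parameter $\epsilon$; your identities $\psi_0^a=\rho(\psi_0)^a+D_{\{e\}}\rho(\psi_0)^{a-1}$ in the coloop case are just a rearrangement of the paper's $\psi_0^a=\rho(\psi_0^a)-(-D_{\{e\}})^a$, since $D_{\{e\}}\rho(\psi_0)^{a-1}=D_{\{e\}}(-D_{\{e\}})^{a-1}=-(-D_{\{e\}})^a$.
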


Before proving Proposition~\ref{prop:degreeofpsi}, we briefly justify that it does, indeed, generalize Lemma~\ref{lem:degreeofpsi}. Suppose that $\sM=([n],2^{[n]})$ so that $A^*(\sM)=A^*(\LM_n)$. Then, for any subset $S\subseteq [n]$, we have $\rk(S)=|S|$, and it follows that
\[
\chi_\sM(\lambda)=\sum_{S\subseteq [n]}(-1)^{|S|}\lambda^{n-|S|}=\sum_{k=0}^n (-1)^k {n\choose k}\lambda^{n-k}=(\lambda-1)^n.
\]
Therefore,
\[
\overline\chi_\sM(\lambda)=(\lambda-1)^{n-1}
\]
and we conclude that $\mu^a(\sM)={n-1\choose a}$, as expected.

\begin{proof}[Proof of Proposition~\ref{prop:degreeofpsi}]
We prove the proposition by induction on $|E|$. If $|E|=1$, then $\ochi_\sM(\lambda)=1$ and $\mu^0(\sM)=1$, so the base case follows from the fact that $A^*(\sM)=A^0(\sM)=\Z$ and $\deg(\psi_0^0\psi_\infty^0)=1$.

We now turn to the induction step. Since $A^*(\sM)=A^*(\underline\sM)$, it suffices to assume throughout the induction step that $\sM$ is simple. First suppose that $e\in E$ is not a coloop. This implies that $\{e\}^c$ is not contained in any proper flats of $\sM$, and it then follows from Proposition~\ref{prop:pullbackpsi} that
\[
\rho_{\sM\setminus\{e\}}(\psi_\infty)=\psi_\infty.
\]
In particular, using that $\rk(\sM\setminus\{e\})=\rk(\sM)=r+1$ and that the degree map is determined by $\deg(\psi_\infty^r)=1$, this implies that
\[
\deg(\rho_{\sM\setminus\{e\}}(\gamma))=\deg(\gamma)\;\;\;\text{ for any }\;\;\;\gamma\in A^*(\sM\setminus\{e\}).
\]
Using our assumption that $\sM$ is simple, we have that $\{e\}\in\cL$, and it then follows from Proposition~\ref{prop:pullbackpsi} that
\[
\rho_{\sM\setminus\{e\}}(\psi_0)=\psi_0-D_{\{e\}}.
\]
Therefore, if $a+b=r$, then
\begin{align*}
\deg_{\sM\setminus\{e\}}(\psi_0^a\psi_\infty^b)&=\deg_\sM\big((\psi_0-D_{\{e\}})^a\psi_\infty^b\big)\\
&=\deg_\sM(\psi_0^a\psi_\infty^b)+(-1)^a\deg_{\sM}(D_{\{e\}}^a\psi_\infty^b)\\
&=\deg_\sM(\psi_0^a\psi_\infty^b)-\deg_{\sM/\{e\}}(\psi_0^{a-1}\psi_\infty^b),
\end{align*}
where the second equality follows from noting that we can write $\psi_0=\sum_{e\notin F}D_F$, in which case it follows $\psi_0D_{\{e\}}=0$, and the third equality follows from Proposition~\ref{prop:degreeofpsi2}. In the case where $a=0$, the second term in the final expression is equal to zero. The induction hypothesis then implies that
\[
\deg_\sM(\psi_0^a\psi_\infty^b)=\mu^a(\sM\setminus\{e\})+\mu^{a-1}(\sM/\{e\})=\mu^a(\sM),
\]
where the final equality is an application of Property ($\chi$3) for $\ochi_\sM(\lambda)$.

Next, suppose that $e\in E$ is a coloop. Since $e$ is a coloop and $\sM$ is simple, both $\{e\}$ and $\{e\}^c$ are flats of $\sM$. It follows from Proposition~\ref{prop:pullbackpsi} that
\[
\rho_{\sM\setminus\{e\}}(\psi_0)=\psi_0-D_{\{e\}}\;\;\;\text{ and }\;\;\;\rho_{\sM\setminus\{e\}}(\psi_\infty)=\psi_\infty-D_{\{e\}^c}.
\]
For any positive integer $a$, we have
\begin{align*}
\psi_0^a&=(\rho_{\sM\setminus\{e\}}(\psi_0)+D_{\{e\}})^a\\
&=\rho_{\sM\setminus\{e\}}(\psi_0^a)+\sum_{k=1}^a{a\choose k}D_{\{e\}}^k(-D_{\{e\}})^{a-k}\\
&=\rho_{\sM\setminus\{e\}}(\psi_0^a)-(-D_{\{e\}})^a,
\end{align*}
where the second equality uses the fact that $\psi_0D_{\{e\}}=0$ and the third equality uses that $\sum_{k=1}^a{a\choose k}(-1)^{a-k}=(-1)^{a-1}$. Similarly,
\[
\psi_\infty^b=\rho_{\sM\setminus\{e\}}(\psi_0^b)-(-D_{\{e\}^c})^b.
\]
Thus, if $a+b=r$, we compute that
\begin{align*}
\psi_0^a\psi_\infty^b&=\rho_{\sM\setminus\{e\}}(\psi_0^a\psi_\infty^b)-\rho_{\sM\setminus\{e\}}(\psi_0^a)(-D_{\{e\}^c})^b-(-D_{\{e\}})^a\rho_{\sM\setminus\{e\}}(\psi_\infty^b)+(-D_{\{e\}})^a(-D_{\{e\}^c})^b\\
&=-\rho_{\sM\setminus\{e\}}(\psi_0^a)(-D_{\{e\}^c})^b-(-D_{\{e\}})^a\rho_{\sM\setminus\{e\}}(\psi_\infty^b)\\
&=D_{\{e\}^c}\psi_0^a\psi_{\{e\}^c}^{b-1}+D_{\{e\}}\psi_{\{e\}}^{a-1}\psi_\infty^b
\end{align*}
where the second equality follows from observing that $\rk(\sM\setminus\{e\})=r$ and $D_{\{e\}}D_{\{e\}^c}=0$, and the third equality follows from the facts that $D_{\{e\}}\psi_0=D_{\{e\}^c}\psi_\infty=0$ and Corollary~\ref{cor:selfintersectionmatroid}. As before, terms with negative exponents are equal to zero. Computing degrees via Proposition~\ref{prop:degreeofpsi2}, we then see that
\[
\deg_\sM(\psi_0^a\psi_\infty^b)=\deg_{\sM\setminus\{e\}}(\psi_0^a\psi_\infty^{b-1})+\deg_{\sM/\{e\}}(\psi_0^{a-1}\psi_\infty^b).
\]
Since $e$ is a coloop, it follows that $\sM\setminus\{e\}=\sM/\{e\}$, because every flat not containing $e$ remains a flat when you add $e$ to it. Therefore, applying the induction hypothesis, we have
\[
\deg_\sM(\psi_0^a\psi_\infty^b)=\mu^a(\sM\setminus\{e\})+\mu^{a-1}(\sM\setminus\{e\})=\mu^a(\sM),
\]
where the final equality is an application of Property ($\chi$2) for $\ochi_\sM(\lambda)$. This completes the induction step, and finishes the proof.
\end{proof}

\subsection{Volume polynomials}

In this subsection, we illustrate the utility of psi classes by using them to reprove the main result in \cite{Eur} and one of the main results in \cite{BES}, both of which give an explicit formula for the volume polynomials of matroids. Given our parallel developments, the arguments in this setting are essentially verbatim generalizations of the arguments made in the setting of generalized permutahedra and Losev-Manin spaces.

Let $\sM=(E,\cL)$ be a loopless matroid of rank $r+1$. The \emph{volume polynomial} of $A^*(\sM)$ is the function
\begin{align*}
\Vol_\sM:A^1(\sM)&\rightarrow\Z\\
D&\mapsto\deg_\sM(D^r).
\end{align*}
Given a spanning set of generators $B=(B_1,\dots,B_m)$ for $A^1(\sM)$, the volume polynomial can be written explicitly as a homogeneous polynomial of degree $r$:
\[
\Vol_{\sM,B}(x_1,\dots,x_m)=\deg_\sM\bigg(\Big(\sum_{i=1}^mx_iB_i\Big)^{r}\bigg)\in\Z[x_1,\dots,x_m].
\]
In fact, given that $A^*(\sM)$ satisfies Poincar\'e duality (discussed in the next subsection), it follows from Lemma 13.4.7 in \cite{Toric} that the volume polynomial associated to any generating set determines a presentation for the Chow ring $A^*(\sM)$. Thus, it follows that the matroid Chow ring is determined from computations of the form
\[
\deg_\sM(B_1^{d_m}\cdots B_m^{d_m}),
\]
where $d_1+\dots+d_m=r$. The main result in \cite{Eur} is the computation of these degrees for the set of generators $(D_F\;|\;F\in\cL^*)$, and one of the main results in \cite{BES} is the computations of these degrees for the set of generators $(\psi_F^-\;|\; \emptyset\neq F\in\cL)$. We note that the authors of \cite{BES} stated their result in terms of classes that they denoted $h_F$, but it follows from the definitions that $h_F=\psi_F^-$. We now recover both of these computations using properties of psi classes.

The main result in \cite{Eur}, which implies Theorem~\ref{thm:eur}, is the following.

\begin{theorem}{\cite[Theorem 3.2]{Eur}}\label{thm:eur2}
If $\F=(\emptyset\subsetneq F_1\subsetneq\dots F_k\subsetneq E)$ is a flag of flats in $\sM$ and $d_1,\dots,d_k$ are positive integers that sum to $r$. Then
\[
\deg_\sM(D_{F_1}^{d_1}\cdots D_{F_k}^{d_k})=(-1)^{r-1}\prod_{i=1}^k{d_i-1\choose \tilde d_i-\rk(F_i)}\mu^{\tilde d_i-\rk(F_i)}(\sM[F_i,F_{i+1}]),
\]
with
\[
\tilde d_j=\sum_{i=1}^j d_i.
\] 
\end{theorem}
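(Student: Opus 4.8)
The plan is to combine the three structural results established so far—Corollary~\ref{cor:selfintersectionmatroid} (expressing self-intersections via psi classes), Proposition~\ref{prop:degreeofpsi2} (factoring degrees along the flag), and Proposition~\ref{prop:degreeofpsi} (identifying degrees of $\psi_0^a\psi_\infty^b$ with coefficients of the reduced characteristic polynomial)—in exactly the way that the proof of Theorem~\ref{thm:eur} in the Losev--Manin setting combined Corollary~\ref{cor:intersectdivisors}, Lemma~\ref{lem:product}, and Lemma~\ref{lem:degreeofpsi}. Indeed, since the matroid analogues are verbatim generalizations, the argument should go through with essentially no new ideas: the only genuinely new input over the $\LM_n$ case is Proposition~\ref{prop:degreeofpsi}, which replaces the multinomial coefficient ${|F_{i+1}|-|F_i|-1\choose a_i^+, a_{i+1}^-}$ with $\mu^{a_i^+}(\sM[F_i,F_{i+1}])$ subject to $a_i^+ + a_{i+1}^- = \rk(F_{i+1})-\rk(F_i)-1$.

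First I would apply Corollary~\ref{cor:selfintersectionmatroid}, with $F_{k+1}=E$, $F_0=\emptyset$, to write
\[
D_{F_1}^{d_1}\cdots D_{F_k}^{d_k}=D_\F\prod_{i=1}^k(-\psi_{F_i}^--\psi_{F_i}^+)^{d_i-1}.
\]
Expanding each binomial power by the binomial theorem (using that $\psi_{F_i}^-$ and $\psi_{F_i}^+$ commute), and pulling out the overall sign $(-1)^{\sum(d_i-1)}=(-1)^{r-k}$, this becomes a sum over tuples of nonnegative integers $(a_i^-, a_i^+)$ with $a_i^-+a_i^+=d_i-1$ of terms ${d_i-1\choose a_i^-,a_i^+}D_\F\prod_i(\psi_{F_i}^-)^{a_i^-}(\psi_{F_i}^+)^{a_i^+}$. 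Next I would take degrees and apply Proposition~\ref{prop:degreeofpsi2}, setting $a_0^+ = a_{k+1}^- = 0$; this factors each summand's degree as $\prod_{i=0}^k \deg_{\sM[F_i,F_{i+1}]}(\psi_0^{a_i^+}\psi_\infty^{a_{i+1}^-})$. Then Proposition~\ref{prop:degreeofpsi} tells us each factor is $\mu^{a_i^+}(\sM[F_i,F_{i+1}])$ when $a_i^+ + a_{i+1}^- = \rk(F_{i+1}) - \rk(F_i) - 1$ and zero otherwise.

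The combinatorial heart of the argument—exactly as in the proof of Theorem~\ref{thm:eur}—is that these two families of constraints,
\[
a_i^- + a_i^+ = d_i - 1 \quad (i=1,\dots,k), \qquad a_i^+ + a_{i+1}^- = \rk(F_{i+1}) - \rk(F_i) - 1 \quad (i=0,\dots,k),
\]
together with $a_0^+ = a_{k+1}^- = 0$, have a unique nonnegative solution, and that solution is $a_i^+ = \tilde d_i - \rk(F_i)$. One checks this by solving recursively: from $a_0^+=0$ the $i=0$ equation forces $a_1^- = \rk(F_1)-1$, hence $a_1^+ = d_1 - 1 - a_1^- = d_1 - \rk(F_1) = \tilde d_1 - \rk(F_1)$; then the $i=1$ equation gives $a_2^-$, and inductively $a_{i+1}^- = \rk(F_{i+1}) - \rk(F_i) - 1 - a_i^+$ and $a_{i+1}^+ = d_{i+1}-1-a_{i+1}^-$, which telescopes to $a_{i+1}^+ = \tilde d_{i+1} - \rk(F_{i+1})$. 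The compatibility of the last equation (at $i=k$) with $a_{k+1}^- = 0$ is precisely the hypothesis $\sum d_i = r = \rk(E)-1 = \rk(F_{k+1})-1$. So only one term survives, with $a_i^+ = \tilde d_i - \rk(F_i)$ and $a_i^- = (d_i - 1) - (\tilde d_i - \rk(F_i)) = (\tilde d_{i-1} - \rk(F_i)) + (\rk(F_i)-\rk(F_{i-1})) - 1$; wait, more cleanly $a_i^- = d_i - 1 - a_i^+$, which I would just substitute.

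Assembling: the surviving term contributes $\prod_{i=1}^k {d_i-1 \choose \tilde d_i - \rk(F_i)}$ from the binomial coefficients (using ${d_i-1\choose a_i^-,a_i^+} = {d_i-1\choose a_i^+}$), times $\prod_{i=0}^k \mu^{a_i^+}(\sM[F_i,F_{i+1}])$, times the sign $(-1)^{r-k}$. The $i=0$ factor is $\mu^{a_0^+}(\sM[\emptyset,F_1]) = \mu^0(\sM|_{F_1}) = 1$ since $a_0^+=0$ (the $0$th coefficient of any reduced characteristic polynomial is $1$), so the product over $i=0,\dots,k$ reduces to the product over $i=1,\dots,k$ in the statement. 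Finally $(-1)^{r-k} = (-1)^{r-k-1}\cdot(-1)$—here I would double-check the sign bookkeeping against the $\LM_n$ proof, where the exponent was $n-k-1 = r-k$; in the matroid statement it is written $(-1)^{r-1}$, which equals $(-1)^{r-k}$ only when $k$ is even, so I would verify whether Theorem~\ref{thm:eur2} intends $(-1)^{r-k-1}$ as in Result~E, and reconcile: in fact Result~E's exponent $(-1)^{\rk(\sM)-k-1} = (-1)^{r-k}$ matches, so the ``$(-1)^{r-1}$'' in the displayed Theorem~\ref{thm:eur2} should be read as $(-1)^{r-k}$ (equivalently $(-1)^{r-k-1}$ after absorbing a sign, as in Result~E).

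I expect the main obstacle to be purely bookkeeping: tracking the sign through the binomial expansion and verifying the unique-solution claim for the linear system cleanly, rather than any conceptual difficulty—the three propositions do all the real work. I would present the proof by first reducing to the single-term sum, then solving the system, then simplifying, mirroring the proof of Theorem~\ref{thm:eur} essentially line for line with ``$|F_i|$'' replaced by ``$\rk(F_i)$'' and the multinomial ${|F_{i+1}|-|F_i|-1\choose \cdot,\cdot}$ replaced by $\mu^{\cdot}(\sM[F_i,F_{i+1}])$.
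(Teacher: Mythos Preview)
Your proposal is correct and follows essentially the same approach as the paper's proof: apply Corollary~\ref{cor:selfintersectionmatroid}, expand binomially, use Propositions~\ref{prop:degreeofpsi2} and~\ref{prop:degreeofpsi} to reduce to a linear system with unique solution $a_i^+=\tilde d_i-\rk(F_i)$, and drop the $i=0$ factor since $\mu^0=1$. You are also right about the sign: the paper's own proof obtains $(-1)^{r-k}$ (matching Result~E), so the ``$(-1)^{r-1}$'' in the displayed statement is a typo.
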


\begin{proof}
To prove this using psi classes, start by applying Corollary~\ref{cor:selfintersectionmatroid}:
\begin{align*}
D_1^{d_1}\cdots D_k^{d_k}&=D_\F\displaystyle\prod_{i=1}^k(-\psi_{F_i}^--\psi_{F_i}^+)^{d_i-1}\\
&=D_\F(-1)^{r-k}\sum_{a_i^-,a_i^+}\prod_{i=1}^k{d_i-1 \choose a_i^-,a_i^+}(\psi_{F_i}^-)^{a_i^-}(\psi_{F_i}^+)^{a_i^+}\\
&=D_\F(-1)^{r-k}\sum_{a_i^+=0}^{d_i-1}\prod_{i=1}^k{d_i-1 \choose a_i^+}(\psi_{F_i}^-)^{d_i-a_i^+-1}(\psi_{F_i}^+)^{a_i^+}\\
&=D_\F(-1)^{r-k}\sum_{a_i^+=0}^{d_i-1}\prod_{i=1}^{k}{d_i-1 \choose a_i^+}\prod_{i=0}^{k}(\psi_{F_{i}}^+)^{a_{i}^+}(\psi_{F_{i+1}}^-)^{d_{i+1}-a_{i+1}^+-1},
\end{align*}
where, by convention, we define $a_0^+=d_{k+1}-a_{k+1}^+-1=0$. By Proposition~\ref{prop:degreeofpsi}, the degree of each summand in this class is zero unless
\[
a_{i}^++d_{i+1}-a_{i+1}^+-1=\rk(F_{i+1})-\rk(F_{i})-1.
\]
These conditions have a unique solution with
\[
a_i^+=\tilde d_i-\rk(F_i).
\]
Thus, computing the degrees by Proposition~\ref{prop:degreeofpsi}, we have
\[
\deg_\sM(D_{F_1}^{d_1}\cdots D_{F_k}^{d_k})=(-1)^{r-k}\prod_{i=1}^k{d_i-1\choose \tilde d_i-\rk(F_i)}\prod_{i=0}^{k}\mu^{\tilde d_{i}-\rk(F_{i})}(\sM[F_i,F_{i+1}]).
\]
Theorem \ref{thm:eur2} follows by noting that $\mu^0(\sM)=1$ for any matroid $\sM$, so the $i=0$ term of the second product is $1$.
\end{proof}

One of the main results of \cite{BES}, which implies Theorem~\ref{thm:postnikov}, is the following.

\begin{theorem}{\cite[Theorem 5.2.4]{BES}}\label{thm:BES}
If $F_1,\dots,F_r$ are nonempty flats of $\sM$, then
\[
\deg_\sM(\psi_{F_1}^-\dots\psi_{F_r}^-)=
\begin{cases}
1 &\text{if } 0<i_1<\dots<i_k\leq r\Longrightarrow \rk(F_{i_1}\cup\dots\cup F_{i_k})>k,\\
0 &\text{else.}
\end{cases}
\]
\end{theorem}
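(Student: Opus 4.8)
The plan is to mimic the proof of Theorem~\ref{thm:postnikov} almost verbatim, replacing cardinalities of unions by ranks of joins and replacing the dimension count $\dim\LM_G=|G|-1$ by the vanishing $A^m(\sM|_G)=0$ for $m>\rk(G)-1$ (which follows from Theorem~\ref{thm:basis}). I would split the argument into the two cases of the asserted formula, using the identity $\psi_F^-=\rho_F(\psi_\infty)$ and the functoriality of the maps $\rho_S$ from Proposition~\ref{prop:pullbackpsi}, together with the defining expression $\psi_F^-=\psi_\infty-\sum_{G\in\cL^*,\,G\supseteq F}D_G$ of Definition~\ref{def:matroidpsiclass}.

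For the vanishing case I would suppose there are indices $0<i_1<\dots<i_k\le r$ with $\rk(F_{i_1}\cup\dots\cup F_{i_k})\le k$ and set $G=\cl(F_{i_1}\cup\dots\cup F_{i_k})=F_{i_1}\vee\dots\vee F_{i_k}$; this is a nonempty flat (each $F_{i_j}$ is nonempty) which is proper since $\rk(G)\le k\le r<\rk(E)$. Since each $F_{i_j}\subseteq G$, functoriality gives $\psi_{F_{i_j}}^-=\rho_{F_{i_j}}(\psi_\infty)=\rho_G(\eta_j)$, where $\eta_j\in A^1(\sM|_G)$ is the class $\psi^-_{F_{i_j}}$ formed inside $\sM|_G$. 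As $\rho_G$ is a ring homomorphism,
\[
\psi_{F_{i_1}}^-\cdots\psi_{F_{i_k}}^-=\rho_G(\eta_1\cdots\eta_k)\in\rho_G\big(A^k(\sM|_G)\big),
\]
and $A^k(\sM|_G)=0$ because $\rk(\sM|_G)=\rk(G)\le k$; hence the product, and thus its degree, is zero.

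For the equals-one case I would assume that $0<i_1<\dots<i_k\le r$ forces $\rk(F_{i_1}\cup\dots\cup F_{i_k})>k$, and expand
\[
\psi_{F_1}^-\cdots\psi_{F_r}^-=\prod_{i=1}^r\Big(\psi_\infty-\sum_{G\in\cL^*,\,G\supseteq F_i}D_G\Big)
\]
into a sum, over subsets $\{i_1<\dots<i_k\}\subseteq\{1,\dots,r\}$ and proper flats $G_{i_j}\supseteq F_{i_j}$, of terms $(-1)^k\psi_\infty^{\,r-k}D_{G_{i_1}}\cdots D_{G_{i_k}}$. The $k=0$ term is $\psi_\infty^r$, of degree $\mu^0(\sM)=1$ by Proposition~\ref{prop:degreeofpsi}, and I would show that every $k\ge1$ term has degree zero. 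A $k\ge1$ term vanishes unless the $G_{i_j}$ are pairwise comparable; assuming they are, their distinct values form a flag $\F''=(H_1\subsetneq\dots\subsetneq H_m)$ (with multiplicities summing to $k$) whose top flat $H:=H_m$ is proper and contains $F_{i_1}\cup\dots\cup F_{i_k}$, so $\rk(H)>k$. Rewriting $D_{G_{i_1}}\cdots D_{G_{i_k}}$ via Corollary~\ref{cor:selfintersectionmatroid} and expanding, the term becomes a $\Z$-linear combination of expressions $\pm D_{\F''}\prod_{l=0}^{m}(\psi_{H_l}^+)^{a_l^+}(\psi_{H_{l+1}}^-)^{a_{l+1}^-}$ with $H_0=\emptyset$, $H_{m+1}=E$, $a_0^+=0$, and $a_{m+1}^-=r-k$; by Proposition~\ref{prop:degreeofpsi2} the degree of each such expression has $\deg_{\sM[H,E]}(\psi_0^{a_m^+}\psi_\infty^{\,r-k})$ as a factor, and by Proposition~\ref{prop:degreeofpsi} this factor is nonzero only when $a_m^++(r-k)=\rk(\sM[H,E])-1=r-\rk(H)$, i.e.\ only when $a_m^+=k-\rk(H)<0$, which is impossible since $a_m^+\ge0$. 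Hence every $k\ge1$ term has degree zero and $\deg_\sM(\psi_{F_1}^-\cdots\psi_{F_r}^-)=1$.

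I do not expect a single hard obstacle here, as the whole argument is a combinatorial transcription of the Losev--Manin proof. The two places that need genuine care are the bookkeeping in the vanishing case — using joins of flats and the rank function in place of unions and cardinalities, and invoking the functoriality $\rho_{S_1}=\rho_{S_1}\circ\rho_{S_2}$ correctly — and, in the equals-one case, the possibility of repeated flats among the $G_{i_j}$, which the Postnikov argument passes over but which is handled cleanly by first reducing to the squarefree situation via Corollary~\ref{cor:selfintersectionmatroid} before applying Propositions~\ref{prop:degreeofpsi2} and~\ref{prop:degreeofpsi}.
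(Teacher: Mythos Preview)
Your proposal is correct and follows essentially the same route as the paper's own proof: the vanishing case via $\rho_S$-functoriality and the rank bound on $A^*(\sM|_S)$, and the equals-one case by expanding $\prod(\psi_\infty-\sum_{G\supseteq F_i}D_G)$ and killing all $k\ge 1$ terms with Propositions~\ref{prop:degreeofpsi2} and~\ref{prop:degreeofpsi}. Your version is slightly more careful than the paper in two inessential respects---you take the closure $G=\cl(\bigcup F_{i_j})$ rather than the raw union, and you explicitly invoke Corollary~\ref{cor:selfintersectionmatroid} to handle repeated $G_{i_j}$'s before applying Proposition~\ref{prop:degreeofpsi2}---but neither changes the argument.
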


\begin{proof}
To prove this result using properties of psi classes, first assume that there exists some $0<i_1<\dots<i_k\leq r$ such that $\rk(F_{i_1}\cup\dots\cup F_{i_k})\leq k$. Denote $S=F_{i_1}\cup\dots\cup F_{i_k}$. By Proposition~\ref{prop:pullbackpsi}, we compute that
\[
\psi_{F_{i_1}}^-\cdots\psi_{F_{i_k}}^-=\rho_{F_{i_1}}(\psi_\infty)\cdots \rho_{F_{i_k}}(\psi_\infty)=\rho_{S}\big(\rho_{F_{i_1}}(\psi_\infty)\cdots \rho_{F_{i_k}}(\psi_\infty)\big).
\]
The input of $\rho_{S}$ is a class in $A^k(\sM|_S)$, which is zero because 
\[
\rk(M|_S)=\rk(S)\leq k.
\]
Thus,
\[
\deg_\sM(\psi_{F_1}^-\dots\psi_{F_r}^-)=\deg(0)=0.
\]

Next, suppose that $0<i_1<\dots<i_k\leq r$ implies that $\rk(F_{i_1}\cup\dots\cup F_{i_k})>k$. By definition,
\begin{align*}
\psi_{F_1}^-\dots\psi_{F_{r}}^-&=\Big(\psi_\infty-\sum_{G\supseteq F_{1}}D_G\Big)\cdots\Big(\psi_\infty-\sum_{G\supseteq F_{{r}}}D_G\Big)\\
&=\sum_{k=0}^{r}\psi_\infty^{r-k}(-1)^k\sum_{0<i_1<\dots<i_k\leq r \atop G_j\supseteq F_{i_j}}D_{G_1}\cdots D_{G_k}.
\end{align*}
We claim that the only nonzero term in the sum is the one indexed by $k=0$. To see why, notice that multiplying $D_{G_1}\cdots D_{G_k}$ will either be zero if $G_i$ and $G_j$ are incomparable for some $i$ and $j$ or it will be a multiple of $D_\mathcal{G}$ for some flag $\mathcal{G}$. In the latter case, the largest flat in the flag $\mathcal{G}$ must be $G=G_1\cup\cdots\cup G_k$, which contains $F_{i_1}\cup\dots\cup F_{i_k}$. This implies that $\rk(G)>k$. It follows that
\[
\deg_\sM(\psi_\infty^{r-k}D_{G_1}\cdots D_{G_k})=0,
\]
because, when expanded using Proposition~\ref{prop:degreeofpsi2}, the exponent of $\psi_\infty$ appearing in the final term of the product is $r-k\geq\rk(E)-\rk(G)=\rk(\sM[G,E])$. Thus, the only nonzero term in the sum is
\[
\deg_\sM(\psi_\infty^r)=\mu^0(\sM)=1.\qedhere
\]
\end{proof}

\subsection{Poincar\'e duality}

In this final section, we describe one more application of our developments of psi classes, which is a new proof of the Poincar\'e duality property for matroid Chow rings. Our proof utilizes the following computational result.

\begin{lemma}\label{lem:poincare}
If $\F=(\emptyset\subsetneq F_1\subsetneq\cdots\subsetneq F_k\subsetneq E)$ is a flag of flats in $\sM$ and we have integers $d_1,\dots,d_k>0$ and $d_E\geq 0$ that sum to $r$, then
\begin{enumerate}
\item $\deg_\sM(D_{F_1}^{d_1}\cdots D_{F_k}^{d_k}D_E^{d_E})=0$ if $d_E+\displaystyle\sum_{i=m}^k d_i>r-\rk(F_{m-1})$ for some $m\in\{1,\dots,k\}$ and
\item $\deg_\sM(D_{F_1}^{d_1}\cdots D_{F_k}^{d_k})=(-1)^{r-k+1}$ if $d_E+\displaystyle\sum_{i=m}^k d_i=r-\rk(F_{m-1})$ for all $m\in\{1,\dots,k\}$.
\end{enumerate}
\end{lemma}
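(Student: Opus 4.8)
The plan is to reduce both parts of the lemma to the degree computation in Theorem~\ref{thm:eur2} (equivalently, to the algorithm furnished by Corollaries~\ref{cor:selfintersectionmatroid} and Propositions~\ref{prop:degreeofpsi2}, \ref{prop:degreeofpsi}). The first step is to eliminate the generator $D_E$: since $\psi_\infty = -D_E$, we rewrite $D_{F_1}^{d_1}\cdots D_{F_k}^{d_k}D_E^{d_E} = (-1)^{d_E} D_{F_1}^{d_1}\cdots D_{F_k}^{d_k}\psi_\infty^{d_E}$, and observe that $\psi_\infty = \psi_{F_{k+1}}^-$ with $F_{k+1}=E$, so this expression already fits the shape handled by Corollary~\ref{cor:selfintersectionmatroid} followed by Proposition~\ref{prop:degreeofpsi2}: expanding $\prod_{i=1}^k(-\psi_{F_i}^--\psi_{F_i}^+)^{d_i-1}$, every monomial is of the form $D_\F\prod_{i=0}^k(\psi_{F_i}^+)^{a_i^+}(\psi_{F_{i+1}}^-)^{a_{i+1}^-}$ with the only freedom being the choice of the $a_i^+$, subject to $a_i^-+a_i^+ = d_i-1$ for $i=1,\dots,k$, together with $a_0^+=0$ and $a_{k+1}^- = d_E$.

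Next I would run the degree computation. By Proposition~\ref{prop:degreeofpsi2} the degree of such a monomial factors as $\prod_{i=0}^k \deg_{\sM[F_i,F_{i+1}]}(\psi_0^{a_i^+}\psi_\infty^{a_{i+1}^-})$, and by Proposition~\ref{prop:degreeofpsi} the $i$-th factor vanishes unless $a_i^+ + a_{i+1}^- = \rk(F_{i+1})-\rk(F_i)-1$, in which case it equals $\mu^{a_i^+}(\sM[F_i,F_{i+1}])$. Summing the "dimension" equations $a_i^+ + a_{i+1}^- = \rk(F_{i+1})-\rk(F_i)-1$ from $i=m-1$ down to... rather, telescoping from $i=m-1$ to $i=k$ and using $a_{k+1}^-=d_E$ together with $a_i^-+a_i^+=d_i-1$, one finds that these compatibility conditions force
\[
d_E + \sum_{i=m}^k d_i = r - \rk(F_{m-1})\quad\text{for all }m\in\{1,\dots,k+1\},
\]
where the $m=k+1$ case reads $d_E = r-\rk(F_k)$ and the $m=1$ case is the condition $d_E+\sum d_i = r$ already assumed. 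Hence if the hypothesis of part (1) holds — strict inequality $d_E+\sum_{i=m}^k d_i > r-\rk(F_{m-1})$ for some $m$ — then no choice of the $a_i^+$ can make all factors simultaneously nonzero (the relevant partial sum is forced to exceed the available dimension), so every monomial has degree zero and part (1) follows. For part (2), when the equalities hold for all $m$, there is a \emph{unique} admissible choice of exponents, namely $a_i^+ = \tilde d_i - \rk(F_i)$ as in the proof of Theorem~\ref{thm:eur2}, and in fact the extremal condition forces $a_i^+=0$ for every $i$: indeed $a_i^+ = (d_E+\sum_{j\ge i+1}d_j) - (d_E+\sum_{j\ge i}d_j) + d_i - 1 - (\text{something})$... more cleanly, the two telescoped equalities at $m=i$ and $m=i+1$ give $a_i^+ = \big(r-\rk(F_{i+1})\big) - \big(r-\rk(F_i)\big) + (\text{...})$ — I would just verify directly that $a_i^+=0$ and $a_{i+1}^-=\rk(F_{i+1})-\rk(F_i)-1$ is the unique solution. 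Then each factor is $\mu^0(\sM[F_i,F_{i+1}]) = 1$, and the overall sign is $(-1)^{d_E}$ from clearing $D_E$ times $(-1)^{r-k-(d_E - 0)}$... the sign bookkeeping gives $(-1)^{r-k+1}$ after accounting for $(-1)^{d_E}$ and the $(-1)^{\sum(d_i-1)} = (-1)^{(r-d_E)-k}$ coming from expanding the $(-\psi^--\psi^+)$ factors. I would compute this sign carefully: the total is $(-1)^{d_E}\cdot(-1)^{(r-d_E-k)} = (-1)^{r-k}$, and reconcile with the claimed $(-1)^{r-k+1}$ by re-examining whether the statement intends $\deg_\sM(D_{F_1}^{d_1}\cdots D_{F_k}^{d_k}D_E^{d_E})$ or the $D_E$-free product (the lemma as typeset drops $D_E^{d_E}$ in part (2), which I suspect is a typo for consistency with part (1)).

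The main obstacle is precisely this sign and index bookkeeping: tracking the parity contributions from (i) replacing $D_E^{d_E}$ by $(-\psi_\infty)^{d_E}$, (ii) expanding each $(-\psi_{F_i}^- - \psi_{F_i}^+)^{d_i-1}$, and (iii) confirming that the forced exponent vector is $(a_i^+) = (0,\dots,0)$ so that only $\mu^0 = 1$ factors appear. None of these steps is conceptually hard — they are all immediate consequences of results already established in Sections~\ref{sec:matroidpsi} — but getting the exponent of $-1$ to come out to $(-1)^{r-k+1}$ (or correcting the statement) requires care. A clean way to organize it is to introduce the partial sums $s_m = d_E + \sum_{i=m}^k d_i$, note $s_1 = r$ and $s_{k+1}=d_E$, rephrase the hypotheses of (1) and (2) as $s_m > r-\rk(F_{m-1})$ for some $m$ versus $s_m = r-\rk(F_{m-1})$ for all $m$, and then observe that the nonvanishing constraints from Proposition~\ref{prop:degreeofpsi} are exactly $s_m \le r - \rk(F_{m-1})$ with equality forced wherever a nonzero contribution survives.
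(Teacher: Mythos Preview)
Your approach is essentially identical to the paper's: expand $D_E^{d_E}=(-\psi_\infty)^{d_E}$, apply Corollary~\ref{cor:selfintersectionmatroid}, break the degree into a product via Proposition~\ref{prop:degreeofpsi2}, and use Proposition~\ref{prop:degreeofpsi} to see that the only possible nonvanishing term has $a_m^+ = r-\rk(F_m)-d_E-\sum_{i>m}d_i$, which must be nonnegative (giving part~(1)) and equals~$0$ under the hypotheses of part~(2), leaving a product of $\mu^0$'s.

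Your sign worry is well founded, and your computation is the right one. Pulling out the signs gives $(-1)^{d_E}\cdot(-1)^{\sum_{i=1}^k(d_i-1)}=(-1)^{r-k}$, not $(-1)^{r-k+1}$; the complete-flag sanity check ($k=r$, all $d_i=1$, $d_E=0$, degree~$1$) confirms this. The paper's proof writes $(-1)^{r-k+1}$ as well, so the discrepancy is a typo in both the statement and proof of the lemma rather than a flaw in your argument. It does not affect the application to Theorem~\ref{thm:poincareduality}, where only the fact that the diagonal entries are $\pm 1$ is used. You are also right that the omission of $D_E^{d_E}$ in the displayed product in part~(2) is a typo.
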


\begin{proof}
By Corollary~\ref{cor:selfintersectionmatroid}, we have
\begin{align*}
D_{F_1}^{d_1}\cdots D_{F_k}^{d_k}D_E^{d_E}&=D_\F(-\psi_\infty)^{d_E}\prod_{i=1}^k(-\psi_{F_i}^--\psi_{F_i}^+)^{d_i-1}\\
&=(-1)^{r-k+1}\sum_{a_i^+=0}^{d_i-1}\prod_{i=1}^k{d_i-1\choose a_i^+}D_\F\prod_{i=0}^k(\psi_{F_{i}}^+)^{a_{i}^+}(\psi_{F_{i+1}}^-)^{d_{i+1}-1-a_{i+1}^+}
\end{align*}
where $a_0^+=0$, $F_{k+1}=E$, and $a_{k+1}^-=d_\infty$. Computing the degree using Proposition~\ref{prop:degreeofpsi2}, we see that the degree is nonzero only if 
\[
a_i^++d_{i+1}-1-a_{i+1}^+=\rk(F_{i+1})-\rk(F_i)-1\;\;\;\text{ for all }\;\;\;i=0,\dots,k.
\]
The unique solution of this system is given by
\[
a_m^+=r-\rk(F_m)-d_E-\sum_{i=m+1}^\ell d_i\;\;\;\text{ for all }\;\;\;m=1,\dots,k.
\]
Property (1) follows from the observation that $a_m^+\geq 0$ for all $m=1,\dots,k$. Notice that the condition in Property (2) implies that $a_m^+=0$ for all $m=1,\dots,k$, and Property (2) then follows from Proposition~\ref{prop:degreeofpsi2} and the fact that
\[
\deg_{\sM[F_i,F_{i+1]}}\big(\psi_\infty^{\rk(F_{i+1})-\rk(F_i)-1}\big)=1.\qedhere
\]
\end{proof}

We now use the Feichtner--Yuzvinsky basis for $A^*(\sM)$ to prove Poincar\'e duality.

\begin{theorem}\label{thm:poincareduality}
For any $k\in 0,\dots,r$, the map 
\begin{align*}
\phi_k:A^k(\sM)&\rightarrow A^{r-k}(\sM)^\vee\\
\gamma&\mapsto (\mu\mapsto\deg_\sM(\mu\gamma))
\end{align*}
is an isomorphism of $\Z$-modules.
\end{theorem}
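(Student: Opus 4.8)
The plan is, for each $k$, to pick orderings of the Feichtner--Yuzvinsky basis of $A^k(\sM)$ and of the Feichtner--Yuzvinsky basis of $A^{r-k}(\sM)$ with respect to which the matrix of $\phi_k$ --- equivalently, taking on $A^{r-k}(\sM)^\vee$ the basis dual to the second, the Gram matrix $\big(\deg_\sM(C\cdot B)\big)$ of the pairing $A^k(\sM)\times A^{r-k}(\sM)\to\Z$, with $B$ running over the first basis and $C$ over the second --- is triangular with every diagonal entry equal to $\pm1$. A square integer matrix of that form has determinant $\pm1$, so $\phi_k$ is then an isomorphism of $\Z$-modules.

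To make the matrix square and to identify the two bases, I would use a complementation involution. By Theorem~\ref{thm:basis}, each Feichtner--Yuzvinsky basis monomial is written uniquely as $B=D_{G_1}^{b_1}\cdots D_{G_s}^{b_s}D_E^{b_E}$ with $\emptyset\subsetneq G_1\subsetneq\cdots\subsetneq G_s\subsetneq E$ proper flats (possibly $s=0$), $1\le b_i\le\rk(G_i)-\rk(G_{i-1})-1$, and $0\le b_E\le r-\rk(G_s)$; its degree is $b_1+\cdots+b_s+b_E$. Set $\sigma(B)=D_{G_1}^{c_1}\cdots D_{G_s}^{c_s}D_E^{c_E}$, where $c_i=\rk(G_i)-\rk(G_{i-1})-b_i$ and $c_E=r-\rk(G_s)-b_E$ reflect the exponents inside their ranges. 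One checks at once that $\sigma$ is a degree-reversing involution of the Feichtner--Yuzvinsky basis, so it restricts to a bijection from the basis of $A^k(\sM)$ onto that of $A^{r-k}(\sM)$; in particular $A^k(\sM)$ and $A^{r-k}(\sM)$ have the same rank. For the diagonal entries, $\sigma(B)\cdot B=D_{G_1}^{\rk(G_1)-\rk(G_0)}\cdots D_{G_s}^{\rk(G_s)-\rk(G_{s-1})}D_E^{r-\rk(G_s)}$, and Lemma~\ref{lem:poincare}(2) --- applied with $d_i=\rk(G_i)-\rk(G_{i-1})$ and $d_E=r-\rk(G_s)$, for which $\sum_{i\ge m}d_i+d_E=r-\rk(G_{m-1})$ holds for all $m$ --- yields $\deg_\sM(\sigma(B)\cdot B)=\pm1$.

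The substance of the proof is then to order the basis so that $\deg_\sM(\sigma(B)\cdot B')=0$ whenever $B$ strictly precedes $B'$. Writing $B'=D_{H_1}^{b'_1}\cdots D_{H_t}^{b'_t}D_E^{b'_E}$, the relations in $\I$ annihilate $\sigma(B)\cdot B'$ unless the flats occurring in $\sigma(B)$ and in $B'$ together form a flag $\emptyset\subsetneq F_1\subsetneq\cdots\subsetneq F_p\subsetneq E$; when they do, $\sigma(B)\cdot B'=D_{F_1}^{d_1}\cdots D_{F_p}^{d_p}D_E^{d_E}$ with $d_m$ the total exponent of $F_m$ arising from $\sigma(B)$ and $B'$. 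Running the computational algorithm --- Corollary~\ref{cor:selfintersectionmatroid} to open up the self-intersections, Proposition~\ref{prop:degreeofpsi2} to split the degree over the subquotients $\sM[F_{m-1},F_m]$, and Proposition~\ref{prop:degreeofpsi} to evaluate, as packaged in Lemma~\ref{lem:poincare}(1) --- shows that $\deg_\sM(\sigma(B)\cdot B')=0$ unless $\rk(F_m)\le d_1+\cdots+d_m$ for every $m$. Substituting the formulas for the $c_i$ and $c_E$ and playing these inequalities against the strict Feichtner--Yuzvinsky bounds $b'_1+\cdots+b'_j<\rk(H_j)$ obeyed by $B'$, one shows that they force the exponent data of $B'$ to sit ``no higher'' than that of $B$ --- roughly: the flags must agree as far down as they need to, and on that common part the partial sums of $B'$'s exponents must dominate those of $B$'s --- and that for $B\ne B'$ the relation ``$\deg_\sM(\sigma(B)\cdot B')\ne0$'' is never two-sided. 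Hence this relation is contained in a partial order on the basis (compare the two flags flat by flat, then break ties by a dominance order on exponent partial sums); any linear extension, transported to $A^{r-k}(\sM)$ via $\sigma$, makes the Gram matrix triangular with $\pm1$ on the diagonal, and the theorem follows.

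The step I expect to be the main obstacle is the last one: choosing the partial order so that the vanishing coming from the computational algorithm is exactly ``triangular.'' The equal-flag case reduces cleanly to dominance of exponent partial sums, but when $B$ and $B'$ have incomparable flags whose union is nevertheless a chain --- so the combined flag $F_1,\dots,F_p$ strictly refines both --- one must track how the exponents $c_i$ and $b'_j$ redistribute over $F_1,\dots,F_p$ and verify, by peeling off $F_1,F_2,\dots$ in turn, that the inequalities $\rk(F_m)\le d_1+\cdots+d_m$ are compatible with the strict Feichtner--Yuzvinsky bounds only in one direction. All of the combinatorial bookkeeping of the proof lives there.
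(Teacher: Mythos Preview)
Your strategy is exactly the paper's: your $\sigma$ is the paper's $B\mapsto\widehat B$, the diagonal entries come from Lemma~\ref{lem:poincare}(2), and the off-diagonal vanishing is to come from Lemma~\ref{lem:poincare}(1). What you flag as ``the main obstacle'' is indeed the entire content of the argument, and your proposed order (``compare the two flags flat by flat, then break ties by a dominance order on exponent partial sums'') is not what the paper uses and does not obviously do the job --- in particular, comparing flags first and exponents second is the wrong interleaving, and a bottom-up reading of the flag makes the case analysis unpleasant in exactly the way you anticipate.

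The paper's device is a single explicit multidegree, read \emph{from the top of the flag down} and interleaving complementary exponents with ranks: writing $B=D_{F_1}^{d_1}\cdots D_{F_\ell}^{d_\ell}$ with $F_\ell=E$ (allowing $d_\ell=0$), set
\[
\delta(B)=(\hat d_\ell,\ \rk(F_{\ell-1}),\ \hat d_{\ell-1},\ \rk(F_{\ell-2}),\ \dots,\ \rk(F_1),\ \hat d_1,\ 0,0,\dots)
\]
and take any total order refining the lexicographic order on $\delta$. With this key the verification that $\deg_\sM(B\widehat{B'})=0$ for $B\prec B'$ collapses to two short cases, according to whether the first discrepancy in $\delta$ occurs at a rank entry or at an exponent entry. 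In either case all higher entries agree, which forces the top portions of the two flags (and their exponents) to coincide --- otherwise incomparable flats of equal rank already give $B\widehat{B'}=0$ --- and then a single tail-sum $\sum_{i\ge m}(d_i+\hat d_i')$ visibly exceeds the bound $r-\rk(F_{m-1})$ (respectively $r-\rk(F_m')$) required by Lemma~\ref{lem:poincare}(1). The ``incomparable flags whose union is a chain'' bookkeeping you worry about never materializes, because the top-down lexicographic key pins down the upper part of the flag before any genuine refinement can occur. So the missing idea is not extracting a partial order from the nonvanishing relation after the fact, but this particular interleaved, top-down lexicographic key that makes triangularity a two-line check.
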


\begin{proof}
Recall that the Feichtner--Yuzvinsky basis for $A^k(\sM)$ comprises all monomials of the form
\[
B=D_{F_1}^{d_1}\cdots D_{F_\ell}^{d_\ell}
\]
where $\emptyset=F_0\subsetneq F_1\subsetneq\cdots\subsetneq F_\ell\subseteq E$ and $0<d_i<\rk(F_i)-\rk(F_{i-1})$ for all $i=1,\dots,\ell$ with $\sum_{i=1}^\ell d_i=k$. Throughout this proof, we always assume that $F_\ell=E$ while allowing for the possibility that $d_\ell=0$. For each such basis element $B$, define a corresponding basis element $\widehat B\in A^{r-k}(\sM)$ by
\[
\widehat B=D_{F_1}^{\hat d_1}\cdots D_{F_\ell}^{\hat d_\ell},
\]
where
\[
\hat d_i=\begin{cases}
\rk(F_i)-\rk(F_{i-1})-d_i &\text{if }i<\ell,\\
r-\rk(F_{\ell-1})-d_i &\text{if }i=\ell.
\end{cases}
\]
Let $\widehat B^\vee\in A^{r-k}(\sM)^\vee$ denote the dual of $\widehat B$. We can write $\phi_k$ as a square matrix whose rows are indexed by the basis elements $B$ and whose columns are indexed by the corresponding basis elements $\widehat B^\vee$. The $(B_1,\widehat B_2^\vee)$ entry of this matrix is $\deg_{\sM}(B_1\widehat B_2)$, which can be computed explicitly by the results of Subsection~\ref{subsec:matroidpsi}. To prove the statement in the theorem, we show that this matrix is invertible over $\Z$.

First, notice that the element $\widehat B$ was constructed so that 
\[
\sum_{i=m}^\ell (d_i+\hat d_i)=r-\rk(F_{m-1})\;\;\;\text{ for all }\;\;\;m\in\{1,\dots,\ell\}, 
\]
so Lemma~\ref{lem:poincare}(2) implies that $\deg_{\sM}(B\widehat B)=(-1)^{r-\ell+1}$. This implies that the diagonal entries of the matrix are all $\pm 1$. To finish the proof, it suffices to prove that the matrix is triangular with respect to some choice of ordering on the bases.

For each basis element $B=D_{F_1}^{d_1}\cdots D_{F_\ell}^{d_\ell}$ as above, define a multidegree by
\[
\delta(B)=(\hat d_\ell,\rk(F_{\ell-1}),\hat d_{\ell-1},\rk(F_{\ell-2}),\dots,\rk(F_1),\hat d_1,0,0,\dots).
\]
The multidegree defines a lexicographic partial ordering on the basis, and we let $\prec$ be any total ordering of the basis that refines the lexicographic partial ordering induced by $\delta$. In other words, we insist that $B\prec B'$ only if $\delta(B)$ is less than or equal to $\delta(B')$ in the lexicographic partial ordering. We claim that $\phi_k$ is lower triangular with respect to this order. To prove this, suppose that $B\prec B'$; we must prove that $\deg_{\sM}(B\widehat B')=0$.

First, consider the case where $\delta(B)=\delta(B')$. It follows that $\ell=\ell'$, and $\rk(F_i)=\rk(F_i')$ and $d_i=d_i'$ for all $i=1,\dots,\ell$. Since $B$ and $B'$ are not the same monomial, it must be the case that $F_i$ is incomparable to $F_i'$ for some $i$. Since $B$ has a factor of $F_i$ and $\widehat B'$ has a factor of $F_i'$, it follows that $B\widehat B'=0$, so $\deg_{\sM}(B\widehat B')=0$.

Next, consider the case where $\delta(B)\neq \delta(B')$. We first suppose that the first entry where they differ is $\rk(F_m)<\rk(F_m')$. This implies that $\hat d_i=\hat d_i'$ and $\rk(F_i)=\rk(F_i')$ for all $i>m$. Since $B$ has a nontrivial factor of $F_i$ and $\widehat B'$ has a nontrivial factor of $F_i'$, and these are flats of the same rank for $i>m$, the only way that $B\widehat B'\neq 0$ is if $F_i=F_i'$ for all $i>m$. Assuming that this is the case, we can write
\[
B\widehat B'=F_\ell^{e_\ell}F_{\ell-1}^{e_{\ell-1}}\cdots F_{m+1}^{e_{m+1}}F_{m'}^e\cdots
\]
where the tail of the product consists of powers of flats of lower rank. Notice that
\begin{align*}
\sum_{i=m+1}^\ell e_i&=\sum_{i=m+1}^\ell(d_i+\hat d_i')\\
&=\sum_{i=m+1}^\ell(d_i+\hat d_i)\\
&=r-\rk(F_m)\\
&>r-\rk(F_m'),
\end{align*}
from which Lemma~\ref{lem:poincare}(1) implies that $\deg_\sM(B\widehat B')=0$.

Lastly, suppose that the first entry where $\delta(B)$ and $\delta(B')$ differ is $\hat d_m<\hat d_m'$. This implies that $\hat d_i=\hat d_i'$ for all $i>m$ and $\rk(F_i)=\rk(F_i')$ for all $i\geq m$. As in the previous case, we can write
\[
B\widehat B'=F_\ell^{e_\ell}F_{\ell-1}^{e_\ell-1}\cdots F_{m}^{e_{m}}F^e\cdots,
\]
where $F$ is equal to the flat in $\{F_{m-1},F_{m-1}'\}$ with highest rank. We then compute
\begin{align*}
\sum_{i=m}^\ell e_i&=\sum_{i=m}^\ell(d_i+\hat d_i')\\
&>\sum_{i=m}^\ell(d_i+\hat d_i)\\
&=r-\rk(F_{m-1})\\
&\geq r-\rk(F)
\end{align*}
from which Lemma~\ref{lem:poincare}(1) implies that $\deg_\sM(B\widehat B')=0$, completing the proof.
\end{proof}

\bibliographystyle{alpha}

\newcommand{\etalchar}[1]{$^{#1}$}

\end{document}